\documentclass{article}
\usepackage{amsmath,amsthm,amscd,amssymb,amsfonts}
\usepackage[mathscr]{eucal}
\usepackage{latexsym}

\usepackage{amsfonts}


\theoremstyle{plain}
\newtheorem{theorem}{Theorem}[section]
\newtheorem{lemma}{Lemma}[section]
\newtheorem{proposition}{Proposition}[section]
\newtheorem{corollary}{\bf {Corollary}}[section]
\theoremstyle{definition}

\newtheorem{remark}{{\bf Remark}}[section]
\theoremstyle{remark}


\newcommand{\vep}{\varepsilon}

\newcommand{\Sym}{\mathrm{Sym}}
\newcommand{\ord}{\mathrm{ord}}
\newcommand{\nd}{\mathrm{nd}}




\newcommand{\cala}{{\mathcal A}}
\newcommand{\calb}{{\mathcal B}}
\newcommand{\calc}{{\mathcal C}}

\newcommand{\calh}{{\mathcal H}}

\newcommand{\cals}{{\mathcal S}}




\newcommand{\CC}{{\mathbb C}}

\newcommand{\QQ}{{\mathbb Q}}
\newcommand{\RR}{{\mathbb R}}

\newcommand{\ZZ}{{\mathbb Z}}


\makeatletter 
\newcommand{\LEQQ}{\mathrel{\mathpalette\gl@align<}}
\newcommand{\GEQQ}{\mathrel{\mathpalette\gl@align>}}
\newcommand{\gl@align}[2]{\lower.6ex\vbox{\baselineskip\z@skip\lineskip\z@ 
\ialign{$\m@th#1\hfil##\hfil$\crcr#2\crcr=\crcr}}}
\makeatother

\def\mattwo(#1;#2;#3;#4){\left(\begin{matrix}
                               #1 & #2 \\
                               #3  & #4
                                      \end{matrix}\right)}
\title {A remark on $p$-adic  Siegel Eisenstein series}
\author{Hidenori Katsurada 
\thanks{Muroran Institute of Technology
27-1 Mizumoto, Muroran, Hokkaido  050-8585, Japan
\quad
and
Department of Mathematics, Hokkaido University,
Kita 10, Nishi 8, Kitaku, Sapporo, Hokkaido 060-0810, Japan,
hidenori@mmm.muroran-it.ac.jp,}
\and
 Shoyu Nagaoka \thanks{
Department of Mathematics, Yamato University, 
2-5-1 Katayama, Suita, Osaka 564-0082, Japan,
shoyu1122.sn@gmail.com }}

\date{}
\begin{document}
\maketitle
\noindent
{\bf Mathematics subject classification 2020}: Primary 11F33, Secondary 11F46\\
\noindent
{\bf Key words}: Siegel Eisenstein series, $p$-adic modular forms

\begin{abstract}
A generalization of Serre's $p$-adic Eisenstein series in the case
of Siegel modular forms is studied
and a coincidence between a $p$-adic Siegel Eisenstein series and
a genus theta series associated with a quaternary quadratic form is proved.
\end{abstract}


\section{Introduction}

In \cite{Serre}, Serre defined the concept of a $p$-adic modular form as the $p$-adic limit of a
$q$-expansion of modular forms with rational Fourier coefficients. The $p$-adic Eisenstein series
was introduced as a typical example of $p$-adic modular forms, and its relation with the
modular forms on $\Gamma_0(p)$ was also studied. Later, Serre's $p$-adic Eisenstein series was 
extended to the case of Siegel modular groups, revealing various interesting properties. For example,
it was shown that some $p$-adic Siegel Eisenstein series becomes the usual Siegel modular form
of level $p$ (\cite{Nagaoka1}, \cite{Kat-Nag}).

Let $p$ be a prime number and $\{k_m\}$ the sequence defined by
$$
k_m:=2+(p-1)p^{m-1}.
$$
For sequence $\{k_m\}$, the $p$-adic Siegel Eisenstein series
$$
\widetilde{E}_2^{(n)}:=\lim_{m\to\infty}E_{k_m}^{(n)}
$$
is defined, where $E_k^{(n)}$ is the ordinary Siegel Eisenstein series of degree $n$ and  weight $k$.

Let $S^{(p)}$ be a positive definite, half-integral symmetric matrix of degree $4$ with
level $p$ and determinant $p^2/16$. We denote by genus\,$\Theta^{(n)}(S^{(p)})$ the genus theta series of
degree $n$ associated with $S^{(p)}$ (for the precise definition, see $\S$ \ref{genusT}). Genus theta
series genus\,$\Theta^{(n)}(S^{(p)})$ is a Siegel modular form of weight 2 on the level $p$ modular
group $\Gamma_0^{(n)}(p)$.

In \cite{KN}, Kikuta and the second author showed the coincidence between the two objects
$\widetilde{E}_2^{(2)}$ and genus\,$\Theta^{(2)}(S^{(p)})$. This result asserts that there is a correspondence
between some $p$-adic Siegel Eisenstein series and some genus theta series when the degree
is 2. The main purpose of this paper is to show that this coincidence still exists for any $n$.
Namely we prove the following theorem.
\vspace{3mm}
\\
\textbf{Theorem}\quad {\it We assume that $p$ is an odd prime number.
Then the degree $n$ $p$-adic Siegel Eisenstein series $\widetilde{E}_2^{(n)}$ coincides with
the degree $n$ genus theta series {\rm genus}\,$\Theta^{(n)}(S^{(p)})$:
$$
\widetilde{E}_2^{(n)}={\rm genus}\,\Theta^{(n)}(S^{(p)}).
$$
In particular, the $p$-adic Siegel Eisenstein series $\widetilde{E}_2^{(n)}$ is a Siegel
modular form of weight 2 on $\Gamma_0^{(n)}(p)$.
}
\\

The equality of this theorem is proved by showing that the Fourier coefficients on both
sides are equal. In particular, as can be seen from the discussion in the text, the computation
of local densities for $n=3$ or $4$ is essential
(cf. \S\; \ref{LC34}).

By considering the $p$-adic first and second approximation of $\widetilde{E}_2^{(n)}$,
we obtain the following results.
\vspace{2mm}
\\
\textbf{Corollary}\quad (1)\;\; {\it Assume that $p>n$. Then the modular form
$\widetilde{E}_2^{(n)}$ of level $p$ and weight 2 is congruent to the weight $p+1$
Siegel Eisenstein series $E_{p+1}^{(n)}$ mod $p$\;{\rm :}
$$
  \widetilde{E}_2^{(n)} \equiv E_{p+1}^{(n)} \pmod{p}.
$$
}
(2)\;\;{\it
Assume that $p\geq 3$. Then we have
$$
\varTheta (E_{p+1}^{(3)}) \equiv 0 \pmod{p}\quad and \quad \varTheta (E_{p^2-p+2}^{(4)}) \equiv 0 \pmod{p^2},
$$
where $\varTheta$ is the theta operator {\rm (cf. $\S$ \ref{ThetaOp})}.
}
\vspace{3mm}
\\
Statement (1) is motivated by Serre's 
result in the case of elliptic modular
forms: For any modular form $f$ of weight 2 on $\Gamma_0(p)$, there is a modular
form $g$ of level one and weight $p+1$ satisfying
$$
f \equiv g \pmod{p}.
$$
The result described in (2) is related to the theory of the mod $p$ kernels of theta operators.
The second congruence provides an example of a Siegel modular form contained in the mod $p^2$ kernel
of a theta operator.
\vspace{4mm}
\\
{\sc Notation.}  
Let $R$ be a commutative ring. We denote by $R^{\times}$ the unit group of $R$. 
We denote by $M_{mn}(R)$ the set of
$m \times n$-matrices with entries in $R.$ In particular put $M_n(R)=M_{nn}(R).$   Put $GL_m(R) = \{A \in M_m(R) \ | \ \det A \in R^\times \},$ where $\det
A$ denotes the determinant of a square matrix $A$. For an $m \times n$-matrix $X$ and an $m \times m$-matrix
$A$, we write $A[X] = {}^t X A X,$ where $^t X$ denotes the
transpose of $X$. Let $\text{Sym}_n(R)$ 
denote
the set of symmetric matrices of degree $n$ with entries in
$R.$ Furthermore, if $R$ is an integral domain of characteristic different from $2,$ let  $\calh_n(R)$ denote the set of half-integral matrices of degree $n$ over $R$, that is, $\calh_n(R)$ is the subset of symmetric
matrices of degree $n$ with entries in the field of fractions of $R$ whose $(i,j)$-component belongs to
$R$ or ${1 \over 2}R$ according as $i=j$ or not.  
 We say that an element $A$ of $M_n(R)$ is non-degenerate if $\det A \not=0$. For a subset $\cals$ of $M_n(R)$ we denote by $\cals^{\nd}$ the subset of $\cals$
consisting of non-degenerate matrices. If $\cals$ is a subset of $\Sym_n(\RR)$ with $\RR$ the field of real numbers, we denote by $\cals_{>0}$ (resp. $\cals_{\ge 0}$) the subset of $\cals$
consisting of positive definite (resp. semi-positive definite) matrices. 
We sometimes write $\Lambda_n$ (resp. $\Lambda_n^+$) instead of $\calh_n(\ZZ)$
(resp. $\calh_n(\ZZ)_{>0}$). 
The group  
$GL_n(R)$ acts on the set $\Sym_n(R)$ by 
$$
GL_n(R) \times \Sym_n(R) \ni (g,A) \longmapsto A[g] \in \Sym_n(R).
$$
\noindent 
Let $G$ be a subgroup of $GL_n(R).$ For a $G$-stable subset ${\mathcal B}$ of $\Sym_n(R)$ we denote by $\calb/G$ the set of equivalence classes of $\calb$ under the action of  $G.$ We sometimes use the same symbol $\calb/G$ to denote a complete set of representatives of $\calb/G.$ We abbreviate $\calb/GL_n(R)$ as $\calb/\!\!\sim$ if there is no fear of confusion. Let $G$ be a subgroup of $GL_n(R)$. Then two symmetric matrices $A$ and $A'$ with
entries in $R$ are said to be $G$-equivalent  with each
other and write $A \sim_{G} A'$ if there is
an element $X$ of $G$ such that $A'=A[X].$ We also write $A \sim A'$ if there is no fear of confusion. 
For square matrices $X$ and $Y$ we write $X \bot Y = \begin{pmatrix} X &O \\ O & Y \end{pmatrix}$.

We put ${\bf e}(x)=\exp(2 \pi \sqrt{-1} x)$ for $x \in \CC,$ and for a prime number $q$ we denote by ${\bf e}_q(*)$ the continuous additive character of $\QQ_q$ such that ${\bf e}_q(x)= {\bf e}(x)$ for $x \in \ZZ[q^{-1}].$
 For a prime number $q$ we denote by $\ord_q(*)$ the additive valuation of $\QQ_q$ normalized so that $\ord_q(q)=1$. 
Moreover for any element $a, b \in \ZZ_q$
we write $b \equiv a \pmod {q}$ if $\ord_q(a-b) >0$.
\section{Siegel Eisenstein series and genus theta series}
\subsection{Siegel modular forms}
Let $\mathbb{H}_n$ be the Siegel upper-half space of degree $n$; then the Siegel
modular group $\Gamma^{(n)}:=Sp_n(\mathbb{R})\cap M_{2n}(\mathbb{Z})$ acts
discontinuously on $\mathbb{H}_n$. For a congruence subgroup $\Gamma'$ of
$\Gamma^{(n)}$, we denote by $M_k(\Gamma')$ the corresponding space of Siegel
modular forms of weight $k$ for $\Gamma'$. Later we mainly deal with the case $\Gamma'=\Gamma^{(n)}$
or $\Gamma_0^{(n)}(N)$ where
$$
\Gamma_0^{(n)}(N)=\left\{ \binom{A\;B}{C\;D}\in\Gamma^{(n)}\;\big{|}\; C \equiv O_n \pmod{N}\;\right\}.
$$
In both cases, $F\in M_k(\Gamma')$ has a Fourier expansion of the form
$$
F(Z)=\sum_{0\leq T\in\Lambda_n}a(F,T)\,{\bf e}(\text{tr}(TZ)).
$$

Taking $q_{ij}:={\bf e}(z_{ij})$
with $Z=(z_{ij})\in\mathbb{H}_n$, we write
$$
q^T:={\bf e}(\text{tr}(TZ))
=\prod_{1\leq i<j\leq n}q_{ij}^{2t_{ij}}\prod_{i=1}^nq_i^{t_i},
$$
where $T=(t_{ij})$ and $q_i=q_{ii}$,\,$t_i=t_{ii}$\,$(i=1,\cdots,n)$. Using this notation, we obtain
\begin{align*}
F=\sum_{0\leq T\in\Lambda_n}a(F,T)\,q^T  & =\sum_{t_i}\left(\sum_{t_{ij}}a(F,T)\prod_{i<j}q_{ij}^{2t_{ij}}\right)
                                                                       \prod_{i=1}^nq_i^{t_{ij}} \\
                                                                 & \in\mathbb{C}[q_{ij}^{-1},q_{ij}][\![q_1,\ldots,q_n]\!].      
\end{align*}

For a subring $R\subset\mathbb{C}$, we denote by $M_k(\Gamma')_R$ the set consisting of
modular forms $F$ all of whose Fourier coefficients $a(F,T)$ lie in $R$. Therefore, an element
$F\in M_k(\Gamma')_R$ may be regarded as an element of
$$
R[q_{ij}^{-1},q_{ij}][\![q_1,\ldots,q_n]\!]. 
$$
\subsection{Siegel Eisenstein series}
Define
$$
\Gamma_\infty^{(n)}:=\left\{\;\binom{A\;B}{C\;D}\in\Gamma^{(n)}\;\big{|}\; C=O_n\; \right\}.
$$
For an even integer $k>n+1$, define a series by
$$
E_k^{(n)}(Z)=\sum_{\binom{*\;*}{C\;D}\in\Gamma_\infty^{(n)}\backslash \Gamma^{(n)}}
                                                 \text{det}(CZ+D)^{-k},\quad Z\in\mathbb{H}_n.
$$
This series is an element of $M_k(\Gamma^{(n)})_{\mathbb{Q}}$  called the {\it Siegel Eisenstein series} of
weight $k$ for $\Gamma^{(n)}$.
\subsection{Genus theta series}
\label{genusT}
Fix $S\in\Lambda_{m}^{+}$ and define
$$
\theta^{(n)}(S;Z):=\sum_{X\in M_{mn}(\mathbb{Z})}\,{\bf e}(\text{tr}(S[X]Z)),\quad Z\in\mathbb{H}_n.
$$

Let $\{S_1,\ldots, S_d\}$ be a set of representatives of $GL_m(\mathbb{Z})$-equivalence classes
in $\text{genus}\,(S)$. The {\it genus theta series} associated with $S$ is defined by
$$
\text{genus}\,\Theta^{(n)}(S)(Z):=
\left(\sum_{i=1}^d\frac{\theta^{(n)}(S_i;Z)}{a(S_i,S_i)}\right)\Big{/}
\left( \sum_{i=1}^d\frac{1}{a(S_i,S_i)}\right), \quad Z\in\mathbb{H}_n           
$$
where
$$
a(S_i,S_i):=\sharp\{\;X\in M_m(\mathbb{Z})\;|\;S_i[X]=S_i\;\}\quad (\text{cf. \S\;\ref{LC34})}.
$$
\subsection{$p$-adic Siegel Eisenstein series}
Let $\{k_m\}_{m=1}^{\infty}$ be an increasing sequence of even positive integers which
is $p$-adically convergent.

If the corresponding sequence of Siegel Eisenstein series
$$
\{E_{k_m}^{(n)}\}\subset \mathbb{Q}[q_{ij}^{-1},q_{ij}][\![q_1,\ldots,q_n]\!]
$$
converges $p$-adically to an element of $\mathbb{Q}_p[q_{ij}^{-1},q_{ij}][\![q_1,\ldots,q_n]\!]$,
then we call the limit
$$
\lim_{m\to\infty}E_{k_m}^{(n)}\in \mathbb{Q}_p[q_{ij}^{-1},q_{ij}][\![q_1,\ldots,q_n]\!]
$$
a {\it $p$-adic Siegel Eisenstein series}.

\section{Main result}
\subsection{Statement of the main theorem}
As stated in the Introduction, the main result of this paper asserts a coincidence between
some $p$-adic
Siegel Eisenstein series and some genus theta series.

First we consider an element $S^{(p)}$ of $\Lambda_4^+$ with level $p$ and
determinant $p^2/16$.
(The existence of such a matrix will be proved  in Lemma \ref{lem.existence-of-S}.)

Next we consider a special $p$-adic Siegel Eisenstein series.

Let $\{k_m\}_{m=1}^\infty\subset \mathbb{Z}_{>0}$ be a sequence defined by
$$
k_m=k_m(p):=2+(p-1)p^{m-1}\qquad (p:\;\text{prime}).
$$
This sequence converges $p$-adically to 2.
We associate this sequence with
the sequence of Siegel Eisenstein series
$$
\{E_{k_m}^{(n)}\}_{m=1}^\infty\subset \mathbb{Q}[q_{ij}^{-1},q_{ij}][\![q_1,\ldots,q_n]\!].
$$
As we prove in the following, this sequence defines a $p$-adic Siegel Eisenstein series.
We set
$$
\widetilde{E}_2^{(n)}:=\lim_{m\to\infty}E_{k_m}^{(n)}.
$$
Our main theorem can be stated as follows.
\begin{theorem}
\label{statementmain}
Let $p$ be an odd prime number and $S^{(p)}$ be as above.
Then the following identity holds:
$$
\widetilde{E}_2^{(n)}={\rm genus}\,\Theta(S^{(p)})
$$
\end{theorem}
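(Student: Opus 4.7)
The plan is to prove the identity by comparing Fourier coefficients. Since both sides lie in $\QQ_p[q_{ij}^{-1},q_{ij}][\![q_1,\ldots,q_n]\!]$ and are determined by their expansions $\sum_{T\ge 0}a(F,T)q^T$, it suffices to establish
$$
\lim_{m\to\infty}a(E_{k_m}^{(n)},T) \;=\; a({\rm genus}\,\Theta^{(n)}(S^{(p)}),T), \qquad T\in\Lambda_n,\ T\ge 0,
$$
a verification which will simultaneously justify the $p$-adic convergence of $\{E_{k_m}^{(n)}\}$ that defines $\widetilde{E}_2^{(n)}$.

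For positive definite $T$ I would rely on the known product formulas on each side. The Kitaoka/Katsurada formula expresses $a(E_k^{(n)},T)$ as a normalizing constant involving $\zeta(1-k)\prod_{i=1}^{[n/2]}\zeta(1-2k+2i)$, a power of $\det T$, and a product of local Siegel series $F_q(T,q^{k-(n+1)/2})$ over all primes $q$; Siegel's main theorem expresses $a({\rm genus}\,\Theta^{(n)}(S^{(p)}),T)$ as an Euler product $\prod_q\alpha_q(T,S^{(p)})$ of local representation densities. The comparison then splits into three pieces: (i) the zeta factors at $k_m$ converge $p$-adically to $p$-adic zeta values via Kubota--Leopoldt, producing the global constant needed for the theta side; (ii) at each prime $q\ne p$, the Siegel series $F_q(T,q^{k_m-(n+1)/2})$ is eventually constant in $m$ and agrees with $\alpha_q(T,S^{(p)})$ by the standard unramified identification, since $S^{(p)}$ is unimodular at such $q$; (iii) at $q=p$, one must directly compare $\lim_m F_p(T,p^{k_m-(n+1)/2})$ with $\alpha_p(T,S^{(p)})$.

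The main obstacle is step (iii), because $S^{(p)}$ is ramified at $p$. As the authors themselves indicate in \S\ref{LC34}, the heart of the matter is the explicit determination of $\alpha_p(T,S^{(p)})$ for $n=3$ and $n=4$, which requires the Jordan decomposition of $S^{(p)}$ over $\ZZ_p$ and a careful count of representations modulo powers of $p$. The cases $n=1,2$ are already handled in \cite{Serre} and \cite{KN} respectively; for $n\ge 5$ with $T$ positive definite a rank argument gives $\alpha_p(T,S^{(p)})=0$, since $S^{(p)}$ has rank $4$ and no $X\in M_{4,n}(\ZZ_p)$ can satisfy $S^{(p)}[X]=T$, and here one must verify, via the same $p$-adic analysis of Siegel series at $p$, that the corresponding Fourier coefficient of $\widetilde{E}_2^{(n)}$ also vanishes.

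Finally, semi-definite $T$ with $\det T=0$ are handled by $GL_n(\ZZ)$-equivalence together with the Siegel $\Phi$-operator. Choosing $U\in GL_n(\ZZ)$ such that $T[U]=T_0\bot O_{n-r}$ with $T_0\in\Lambda_r^+$ and $r=\mathrm{rank}(T)$, the even weight gives $a(F,T)=a(F,T_0\bot O_{n-r})=a(\Phi^{n-r}F,T_0)$. Since $\Phi(E_{k_m}^{(n)})=E_{k_m}^{(n-1)}$ for $m$ large enough and $\Phi({\rm genus}\,\Theta^{(n)}(S^{(p)}))={\rm genus}\,\Theta^{(n-1)}(S^{(p)})$, an induction on $n$ reduces the degenerate coefficients to positive definite coefficients in lower degree, completing the proof.
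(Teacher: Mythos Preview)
Your overall strategy---comparing Fourier coefficients via the product formulas on both sides, handling degenerate $T$ by the $\Phi$-operator and induction, and isolating $n=3,4$ as the essential cases---matches the paper's. But your treatment of $n\ge 5$ contains a genuine gap, and it originates in step (i) being too optimistic.

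You assert that the zeta factors converge $p$-adically to $p$-adic zeta values and that, for $n\ge 5$, the vanishing of $a(\widetilde{E}_2^{(n)},T)$ should follow ``via the same $p$-adic analysis of Siegel series at $p$.'' Neither is right. Since $F_p(T,X)\in\ZZ[X]$ has constant term $1$ and $p^{k_m-n-1}\to 0$ $p$-adically, the local factor at $p$ tends to $1$, not $0$ (Proposition~\ref{prop.limit-of-Kitaoka-polynomial}); it cannot produce any vanishing. The vanishing actually comes from the Bernoulli factors. Among the terms $(k_m-i)/B_{2k_m-2i}$ for $1\le i\le [n/2]$, the case $i=2$ satisfies $2k_m-4=2(p-1)p^{m-1}\equiv 0\pmod{p-1}$, so von Staudt--Clausen gives $\mathrm{ord}_p(B_{2k_m-4})=-1$ while $\mathrm{ord}_p(k_m-2)=m-1$; hence $\mathrm{ord}_p\bigl((k_m-2)/B_{2k_m-4}\bigr)=m\to\infty$. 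In Kubota--Leopoldt language this factor corresponds to $\zeta_p(s)$ as $s\to 1$, the simple pole, so its reciprocal tends to $0$ rather than to a finite $p$-adic zeta value. For $n=4$ this is exactly cancelled by $B_{k_m-2,\chi_T}/(k_m-2)$ when $\chi_T$ is trivial (the paper's Remark~\ref{cacellation}); for $n\ge 5$ no such cancellation is available, and one must still bound all remaining factors from below in $p$-adic valuation (including $B_{k_m-n/2,\chi_T}/(k_m-n/2)$, via Carlitz) to conclude. This is the content of Proposition~\ref{n5}, and it is not a consequence of the local analysis at $p$.

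Two smaller points: the exponent in the Siegel series should be $q^{k-n-1}$, not $q^{k-(n+1)/2}$ (cf.\ Proposition~\ref{prop.FC-Siegel}); and in step (ii) the identification of $F_q(T,q^{1-n})$ with $\alpha_q(H_2,T)$ holds only up to the factor $(1-q^{-2})^2$ (Proposition~\ref{prop.local-density-at-q}), which must be absorbed into $\zeta(2)^{-2}$ when matching the global constants.
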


If $n=2$, the above identity has already been proved in \cite{KN},
and the proof for a general $n$ is essentially the
case $n=3$ or $4$, which will be presented in the next section.

\subsection{Case $\boldsymbol{n=3}$ or $\boldsymbol{4}$}
\label{n=3-4}
We prove our main result in the case that $n$ is $3$ or $4$.
\begin{theorem} \label{th.main-result}
\begin{itemize}
\item[{\rm (1)}] For any $T \in \Lambda_3^+$, we have 
 \begin{align*}
a(\widetilde E_2^{(3)},T)=a(\mathrm{genus} \ \Theta^{(3)}(S^{(p)}),T).
\end{align*}
\item[{\rm (2)}] For any $T \in \Lambda_4^+$, we have 
 \begin{align*}
a(\widetilde E_2^{(4)},T)=a(\mathrm{genus} \ \Theta^{(4)} (S^{(p)}),T).
\end{align*}
\end{itemize}
\end{theorem}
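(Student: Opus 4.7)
The plan is to verify the equality of the $T$-th Fourier coefficients
\begin{equation*}
a(\widetilde E_2^{(n)},T) \ \stackrel{?}{=}\ a(\mathrm{genus}\,\Theta^{(n)}(S^{(p)}),T)
\end{equation*}
for each $T\in\Lambda_n^+$ with $n=3$ or $4$, by writing both sides as products of local factors and comparing these factor by factor. On the Eisenstein side, the classical Siegel--Shimura--Kitaoka formula expresses $a(E_k^{(n)},T)$ as a global constant $C_n(k)$ (involving $\zeta(1-k)$, the values $\zeta(1-2k+2i)$ for $1\le i\le [n/2]$, and, when $n$ is even, an $L$-value attached to $\det(2T)$) multiplied by a product $\prod_q b_q(T,k)$ of normalized local Siegel series. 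On the genus side, Siegel's mass formula writes $a(\mathrm{genus}\,\Theta^{(n)}(S^{(p)}),T)$ as an explicit global constant times $\prod_q \alpha_q(S^{(p)},T)$, where $\alpha_q$ is the local representation density.

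I would first handle the $p$-adic convergence at primes $q\neq p$. Since $k_m=2+(p-1)p^{m-1}$, Fermat's little theorem gives $q^{k_m}\equiv q^2\pmod{p^m}$ for every $q\neq p$, and $b_q(T,k_m)$ is a rational function in $q^{k_m}$, so it converges $p$-adically to $b_q(T,2)$. The same holds for the prime-to-$p$ Euler factors of the global zeta and $L$-factors by Kummer congruences (equivalently, by the existence of the Kubota--Leopoldt/Serre $p$-adic $L$-functions). Thus the product $\prod_{q\neq p} b_q(T,k_m)$, together with the $p$-stabilized global constant, converges $p$-adically to the desired $k=2$ value, and on the genus side the same product matches $\prod_{q\neq p}\alpha_q(S^{(p)},T)$ after the standard comparison of normalized local Siegel series with local densities for a quaternary $S$.

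The heart of the proof is the local computation at $p$, as announced in the introduction. The matrix $S^{(p)}\in\Lambda_4^+$ has level $p$ and determinant $p^2/16$, so its $\ZZ_p$-Jordan decomposition is forced to be a rank-$2$ unimodular block orthogonally summed with a rank-$2$ block of scale $p$, which permits an explicit evaluation of $\alpha_p(S^{(p)},T)$ in terms of the Jordan, or Gross--Keating, invariants of $T$ over $\ZZ_p$. On the Eisenstein side, the explicit formulas for the Siegel series of a ternary or quaternary half-integral matrix over $\ZZ_p$ (due to Kitaoka and Katsurada) give $b_p(T,k_m)$ in closed form; after removing the divergent $p$-Euler factors $\zeta_p(1-k_m)\prod_{i=1}^{[n/2]}\zeta_p(1-2k_m+2i)$ from $C_n(k_m)$, one extracts the finite $p$-adic limit and shows that it equals, up to a constant depending only on $n$ and $p$, the local density $\alpha_p(S^{(p)},T)$.

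The main obstacle is precisely this local identity at $p$: both $b_p(T,k_m)$ and $\alpha_p(S^{(p)},T)$ depend on the $\mathit{GL}_n(\ZZ_p)$-equivalence class of $T$, so the identity must be checked through a case analysis according to the Jordan type of $T$ over $\ZZ_p$, and each case requires an intricate explicit comparison. (For $n=2$ this is the content of \cite{KN}; the cases $n=3,4$ are genuinely new and will be carried out in \S \ref{LC34}.) Once this local identity is established, multiplying over all primes and combining with the prime-to-$p$ limits and the regularized global constants yields the desired equality of Fourier coefficients for every $T\in\Lambda_n^+$, proving Theorem \ref{th.main-result}.
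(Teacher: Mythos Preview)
Your outline follows the same overall strategy as the paper: express both sides as products of local factors via the Siegel--Katsurada formula for $a(E_k^{(n)},T)$ and Siegel's mass formula for the genus theta series, pass to the $p$-adic limit at $q\neq p$ by continuity and Kummer-type congruences, and isolate the prime $p$ as the crux. However, your description of the $p$-local step contains an imprecision that would not survive a careful write-up. You assert that the regularized $p$-adic limit of the $p$-local Eisenstein factor ``equals, up to a constant depending only on $n$ and $p$, the local density $\alpha_p(S^{(p)},T)$.'' But the normalized Siegel series polynomial $F_p(T,X)\in\ZZ[X]$ has constant term $1$, so $\lim_{m\to\infty}F_p(T,p^{k_m-n-1})=1$ identically: the Eisenstein side retains \emph{no} $p$-local information about $T$ in the limit. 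By contrast $\alpha_p(U_0\bot pU_0,T)$ genuinely depends on $\eta_p(T)$ (and, for $n=4$, on $\ord_p\det(2T)$), so the identity you describe cannot hold as a purely local statement at $p$.

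The paper reconciles this by a \emph{global} argument you do not mention. When $\eta_p(T)=1$ (so $\alpha_p=0$ on the genus side), the product formula $\prod_q\eta_q(T)=-1$ forces $\eta_q(T)=-1$ at some $q\neq p$, and then the functional equation of $F_q$ yields $F_q(T,q^{1-n})=0$, so the Eisenstein side also vanishes---but via a factor away from $p$. When $\eta_p(T)=-1$, the paper computes $\alpha_p(U_0\bot pU_0,T)$ directly (Proposition~\ref{prop.local-density-at-p}) and finds it is an explicit constant depending only on $p$ (and on $\ord_p\det(2T)$ for $n=4$), which then matches the ratio of global constants coming from the Bernoulli-number limits. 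For $n=4$ there is the further issue that $\det(2T)$ must be square; on the Eisenstein side this vanishing comes from $\lim_m B_{k_m-2,\chi_T}/(k_m-2)$ when $\chi_T$ is nontrivial, not from any local Siegel series. So the ``local identity at $p$'' you envisage should be replaced by two distinct ingredients: an explicit evaluation of $\alpha_p$, and a product-formula/functional-equation argument ensuring simultaneous vanishing of both sides.
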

By the above theorem and \cite{KN}. we have the following result.
\begin{corollary}
\label{cor.main-result2}
Let $n=3$ or $4$. Then
\begin{align*} 
\widetilde E_2^{(n)}=\mathrm{genus} \ \Theta^{(n)}(S^{(p)}).
\end{align*}
\end{corollary}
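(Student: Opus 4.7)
The plan is to upgrade Theorem~\ref{th.main-result}, which provides equality of Fourier coefficients only at positive-definite indices, into the identity of full Fourier expansions asserted by the corollary. Since both $\widetilde E_2^{(n)}$ and $\mathrm{genus}\,\Theta^{(n)}(S^{(p)})$ admit expansions indexed by $T\in \Lambda_n$ with $T\geq 0$, it suffices to match $a(\cdot,T)$ for every such $T$. Theorem~\ref{th.main-result} takes care of every $T\in \Lambda_n^+$, so only the degenerate indices, i.e.\ those of rank $r<n$, remain.

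For such a $T$, I would use that $a(F,T)$ depends only on the $GL_n(\ZZ)$-equivalence class of $T$ and pick a representative of the form $T=T'\bot O_{n-r}$ with $T'\in \Lambda_r^+$ (taking $T=O_n$ when $r=0$). The $T$-th Fourier coefficient of $F$ is then the $T'$-th Fourier coefficient of the image $\Phi^{n-r}(F)$ under the $(n-r)$-fold Siegel $\Phi$-operator, so the problem reduces to identifying $\Phi^{n-r}$ applied to each side.

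On the Eisenstein side, the classical identity $\Phi(E_k^{(n)})=E_k^{(n-1)}$ (valid for even $k>n+1$) together with the coefficient-wise nature of the $p$-adic limit defining $\widetilde E_2^{(n)}$ yields $\Phi(\widetilde E_2^{(n)})=\widetilde E_2^{(n-1)}$. On the theta side, letting the last diagonal entry of $Z$ tend to $i\infty$ forces the last column of the summation variable $X$ in $\theta^{(n)}(S;Z)$ to vanish, whence $\Phi(\theta^{(n)}(S;Z))=\theta^{(n-1)}(S;Z')$ for every $S\in \Lambda_m^+$; averaging over the genus preserves this, so $\Phi(\mathrm{genus}\,\Theta^{(n)}(S^{(p)}))=\mathrm{genus}\,\Theta^{(n-1)}(S^{(p)})$. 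Iterating these relations, the equality of degenerate Fourier coefficients reduces to $\widetilde E_2^{(r)}=\mathrm{genus}\,\Theta^{(r)}(S^{(p)})$ for $r<n$. The case $r=0$ is the trivial equality of constant terms; the case $r=2$ is the theorem of Kikuta and the second author in \cite{KN}; the case $r=1$ follows from $r=2$ by one further application of $\Phi$. For $n=4$ one then invokes the statement for $n=3$ just established, completing the induction.

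The main obstacle is essentially bookkeeping; the only mildly nontrivial verification is that $\Phi$ commutes with the coefficient-wise $p$-adic limit, and this is immediate from the definition of the limit. Consequently, given Theorem~\ref{th.main-result}, whose proof absorbs all the hard analytic work (notably the local density computations for $n\in\{3,4\}$ flagged in the introduction), the corollary follows with no further analytic input.
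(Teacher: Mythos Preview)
Your argument is correct and is precisely the approach the paper has in mind: the paper simply writes ``By the above theorem and \cite{KN}'' for this corollary, relying on Theorem~\ref{th.main-result} for positive-definite indices and on the reduction via the Siegel $\Phi$-operator (stated explicitly in \S\ref{n=3-4} and used again in the general case) together with the $n=2$ result of \cite{KN} for the degenerate indices. You have supplied exactly the details that the paper leaves implicit.
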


To prove Theorem \ref{th.main-result}, we compute the both-hand 
sides of the equality in the theorem  explicitly. 
We also use the notation in \cite{IK22}. 
Let $q$ be a prime number. 

Let $\langle  \ {} \ , \ {} \ \rangle=\langle \ {} \ , \ {} \ \rangle_Q$ be the Hilbert symbol on  $\QQ_q$. 
Let $T$ be  a non-degenerate symmetric matrix with entries in $\QQ_q$ of degree $n$. Then $T$ is $GL_n(\QQ_q)$-equivalent to $b_1 \bot \cdots \bot b_n$ with $b_1,\ldots,b_n \in \QQ_q^{\times}$. 
Then we define the Hasse invariant  $h_q(T)$ as  
\[h_q(T)=\prod_{1 \le i \le j \le n} \langle b_i,b_j \rangle.\]
We also define $\varepsilon_q(T)$ as 
\[\varepsilon_q(T)=\prod_{1 \le i < j \le n} \langle b_i,b_j \rangle.\]
These do not depend on the choice of $b_1,\ldots,b_n$. We also denote by $\eta_q(T)$ the Clifford invariant of $T$
(cf. \cite{IK22}). 
Then we have 
\[\eta_q(T)=
\begin{cases}
\langle -1,-1 \rangle^{m(m+1)/2} \langle (-1)^m,\det (T) \rangle \varepsilon_q(T) & \ \text{if $n=2m+1$,}\\
\langle -1,-1 \rangle^{m(m-1)/2} \langle (-1)^{m+1},\det (T) \rangle \varepsilon_q(T) & \ \text{if $n=2m$,}
\end{cases}\]
and hence, 
\[\eta_q(T)=
\begin{cases}
\langle -1,-1 \rangle^{m(m+1)/2} \langle (-1)^m \det (T),\det (T) \rangle \varepsilon_q(T) & \ \text{if $n=2m+1$,}\\
\langle -1,-1 \rangle^{m(m-1)/2} \langle (-1)^{m+1} \det (T),\det (T) \rangle \varepsilon_q(T) & \ \text{if $n=2m$.}
\end{cases}\]
Let $T \in \Sym_n(\QQ_q)$. Then, by the product formula for the Hilbert symbol, we have
$\prod_q h_q(T)=1$ and 
\[\prod_q \eta_q(T)=\begin{cases} (-1)^{(n^2-1)/8} & \text{ if } n \text{ is odd} ,\\
(-1)^{n(n-2)/8} & \text{ if } n \text{ is even}.
\end{cases}\]
For $a \in \ZZ_q, a \not=0$, define $\chi_q(a)$ as
\[
\chi_q(a)=
\begin{cases} 
1 & \text{ if } \QQ_q(\sqrt{a})=\QQ_q,\\
-1 & \text{ if } \QQ_q(\sqrt{a})/\QQ_q \text { is unramified quadratic}, \\
0 & \text{ if } \QQ_q(\sqrt{a})/\QQ_q \text{  is  ramified quadratic}.
\end{cases}
\]

We now prove the exisitence of the matrix $S^{(p)}$ that appears in the main theorem.
\vspace{2mm}
\\
Let $H_k = \overbrace{H \bot ...\bot H}^k$ with 
$H =\left(\begin{matrix}
             0        & 1/2  \\
             1/2        & 0 
                                   \end{matrix}\right)$.
We take an element $\epsilon \in \ZZ_p^\times$ such that $\chi_p(-\epsilon)=-1$, 
and put $U_0=1 \,\bot\, \epsilon$.
\begin{lemma}\label{lem.existence-of-S}
For each prime number  $q$, let $S_q$ be an element of $\calh_4(\ZZ_q)$ 
such that
\begin{align*} 
S_q =\begin{cases} U_0 \bot pU_0 & \text{ if }  q=p, \\
H_2 & \text{ otherwise}.
\end{cases}
\end{align*}
Then there exists an element $S$ of $\Lambda_4^+$ with level $p$ and determinant $16^{-1}p^2$ such that
\begin{align*}
S \sim_{GL_4(\ZZ_q)} S_q \text{ for any } q. \tag{E}
\end{align*} 
Conversely, if $S$ is an element of $\Lambda_4^+$ with level $p$ and determinant $16^{-1}p^2$ , then $S$ satisfies  condition {\rm (E)}.
\end{lemma}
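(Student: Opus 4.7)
My plan is to combine the Hasse--Minkowski local-global principle for rational quadratic spaces with the standard genus-patching argument for integral lattices. For existence, I first check that the prescribed local data $\{S_q\}_q$ extend to a single positive-definite rank-$4$ rational quadratic form. Two compatibilities must hold. First, all local discriminants lie in one square class: this is immediate since $\det(U_0 \bot pU_0) = \epsilon^2 p^2$ and $\det(H_2) = 1/16$ both represent $p^2/16$ modulo $(\ZZ_q^\times)^2$ in every completion ($16$ and $\epsilon^2$ are squares in $\ZZ_p^\times$ since $p$ is odd, and $p^2$ is a square in $\ZZ_q^\times$ for $q \ne p$). Second, the product formula $\prod_v h_v = 1$ with $h_\infty = 1$ (positive-definite signature) reduces to $\prod_q h_q(S_q) = 1$. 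A direct computation gives $h_q(H_2) = \langle -1, -1\rangle_q$, equal to $-1$ at $q = 2$ and $+1$ at odd $q \ne p$; and $h_p(U_0 \bot pU_0) = \langle p, -\epsilon\rangle_p$, which equals $-1$ because the defining condition $\chi_p(-\epsilon) = -1$ makes $\QQ_p(\sqrt{-\epsilon})/\QQ_p$ unramified quadratic, so $p$ is not a norm from it. The product $(-1)\cdot(-1) = 1$ gives the compatibility.

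Given these compatibilities, Hasse--Minkowski yields a positive-definite rank-$4$ $\QQ$-quadratic space $(V,Q)$ locally isomorphic to $S_q$ at every finite prime. I then apply the standard genus-patching procedure: starting from any $\ZZ$-lattice $L_0 \subset V$, modify it at the finitely many primes where $L_0 \otimes \ZZ_q \not\sim S_q$ to produce a global lattice $L$ whose completion at each $q$ is $GL_4(\ZZ_q)$-equivalent to $S_q$. The Gram matrix $S$ of $L$ lies in $\Lambda_4^+$, has determinant $p^2/16$ by multiplicativity, and has level exactly $p$: the level divides $p$ because $pS_q^{-1}$ is half-integral over $\ZZ_q$ for every $q$ (trivially at $q \ne p$, and by direct inspection of $(U_0 \bot pU_0)^{-1}$ at $q = p$), and the level is not a proper divisor because $S_p^{-1}$ has $1/p$ entries.

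For the converse, suppose $S \in \Lambda_4^+$ has level $p$ and $\det S = p^2/16$. At $q \ne p$, the level condition forces $S^{-1}$ to be half-integral over $\ZZ_q$ (since $p \in \ZZ_q^\times$), so $S$ is $\ZZ_q$-unimodular; combined with $\det S \in (1/16)(\ZZ_q^\times)^2$, the local classification of even unimodular rank-$4$ lattices over $\ZZ_q$ yields the unique class of $H_2$. At $q = p$, the $p$-adic Jordan decomposition of $S$ has blocks with $p$-valuations summing to $\mathrm{ord}_p(\det S) = 2$, and the level-$p$ condition precludes blocks of $p$-valuation $\ge 2$; the only admissible Jordan type is $(2,2)$, giving $S \sim_{GL_4(\ZZ_p)} A \bot pA'$ for $\ZZ_p$-unimodular binary forms $A$ and $A'$. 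The determinant condition forces $\det A \cdot \det A'$ to be a square unit, and the local Hasse invariant at $p$ (computed from the Jordan data and pinned down by the square-class of $-\det S$) then forces $A \sim A' \sim U_0$.

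The main obstacle is the $p$-local bookkeeping in both directions: for existence, the explicit Hasse-invariant computation $h_p(U_0 \bot pU_0) = \langle p, -\epsilon\rangle_p = -1$ using $\chi_p(-\epsilon) = -1$; and in the converse, isolating $U_0 \bot pU_0$ among the a priori admissible Jordan forms $A \bot pA'$ with unit-square discriminant product by means of this same Hasse-invariant constraint.
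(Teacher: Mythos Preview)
Your approach is essentially the same as the paper's: both verify discriminant and Hasse-invariant compatibilities and invoke a local-global existence theorem for lattices (the paper cites Kitaoka's Theorem~4.1.2 directly, where you unpack it as Hasse--Minkowski plus genus patching), and for the converse both identify $S \sim H_2$ at $q \ne p$ by unimodularity and then isolate the $p$-local class via the Hasse invariant.

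One point needs tightening. In the last sentence of your converse argument you write that $h_p(S)$ is ``pinned down by the square-class of $-\det S$.'' This is not how $h_p(S)$ is determined: the square class of $\det S$ in $\QQ_p^\times$ is trivial (it is $p^2/16$), and the Jordan type $(2,2)$ with $\det A \cdot \det A'$ a square unit is compatible with \emph{both} $A \sim A' \sim U_0$ and $A \sim A' \sim H$, which have $h_p = -1$ and $h_p = +1$ respectively. What actually forces $h_p(S) = -1$ is the product formula $\prod_q h_q(S) = 1$ together with the values $h_q(S) = h_q(H_2)$ for $q \ne p$ that you already established (namely $-1$ at $q=2$ and $+1$ at odd $q \ne p$). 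This is exactly how the paper argues, and you have all the ingredients in hand from your existence computation; you just need to say so.
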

\begin{proof}
Let $S_q$ be as above.  By assumption, we have
\[ (16^{-1}p^2)^{-1}\det (S_q) \in (\ZZ_q^\times)^2 \text{ for any } q,\]
and
\begin{align*} h_q(S_q)=\begin{cases} -1 & \text{ if } q=p \text{ or } 2, \\
1 & \text{ otherwise}.
\end{cases} 
\end{align*}
 Hence, by \cite[Theorem 4.1.2]{Ki2}, there exists an element $S$ of $\text{Sym}_4(\QQ)_{>0}$ satisfying condition (E). By construction, we easily see that $S$ belongs to $\calh_4(\ZZ)_{>0}$ with level $p$ and determinant  $16^{-1}p^2$.

Conversely, let  $S$ be  an element of $\Lambda_4^+$ with level $p$ and determinant $16^{-1}p^2$. 
Then, we have
\[\det (2S) \in  (\ZZ_q^\times)^2 \text{ for any } q\not=p.\]
Hence, we have $S \sim_{GL_4(\ZZ_q)} H_2$,
so we have  $h_q(S)=1$ if $q \not=p,\,2$ and $h_2(S)=-1$.
Thus, we have $h_p(S)=-1$. By definition, we have 
$p^{-2}\det (S) \in (\ZZ_p^\times)^2$ and  $pS^{-1} \in \calh_4(\ZZ_p)$. Then, we easily see that 
$S \sim_{GL_4(\ZZ_p)} U_0 \bot pU_0$.
This proves the assertion.
\end{proof}

\subsubsection{Computation of $\boldsymbol{a(\widetilde{E}_2^{(n)},T)}$ for $\boldsymbol{n=3}$ or $\boldsymbol{4}$}

In the case $n$ is even, for $T \in \calh_n(\ZZ_q)^{\rm nd}$ we put
$\xi_q(T)=\chi_q((-1)^{n/2} \det (2T))$.
For $T \in \calh_n(\ZZ_q)^{\rm nd}$, let $b_q(T,s)$ be the Siegel series of $T$. 
Then $b_q(T,s)$ is a polynomial in $q^{-s}$. More precisely, we define a polynomial $\gamma_q(T,X)$ in $X$ by 
$$\gamma_q(T,X)=
\begin{cases}
(1-X)\prod_{i=1}^{n/2}(1-q^{2i}X^2)(1-q^{n/2}\xi_q(T) X)^{-1} & \text{ if $n$ is  even,} 
\vspace{2mm}
\\ 
(1-X)\prod_{i=1}^{(n-1)/2}(1-q^{2i}X^2) & \text{ if $n$ is  odd.} \end{cases}$$ 
Then  there exists a polynomial $F_q(B,X)$ in $X$ with coefficients in $\ZZ$ such that 
$$F_q(T,q^{-s})={b_q(T,s) \over \gamma_q(T,q^{-s})}.$$ 
The properties of the polynomial $F_q(B,X)$ is studied in detail by the first author
in \cite{K99}. (In particular,  see \cite[Theorem 3.2]{K99} for the properties used below.)

 We  put
\[\widetilde b_q(T,X)=\gamma_q(T,X)F_q(T,X).\]
\begin{proposition}\label{prop.FC-Siegel}
For any $T \in \Lambda_n^+$, we have
\begin{align*}
a(E_k^{(n)},T)&= (-1)^{[(n+1)/2]}2^{n-[n/2]} \frac{k}{B_k} \prod_{i=1}^{[n/2]}\frac{2k-2i}{B_{2k-2i}} 
                    \prod_q F_q(T,q^{k-n-1})\\
&\times \begin{cases} \frac{B_{k-n/2,\chi_T}}{k-n/2} & \text{ if } n \text{ is even,} \\
1 & \text{ if } n \text{ is odd,}
\end{cases}
\end{align*}
where $\chi_T$ denotes the primitive Dirichlet character corresponding to the extension
$\mathbb{Q}(\sqrt{(-1)^{n/2}{\rm det}(2T)})/\mathbb{Q}$.
\end{proposition}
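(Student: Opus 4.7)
The proposition is a repackaging of Siegel's classical formula for Fourier coefficients of holomorphic Eisenstein series, with the local Siegel series organized via the factorization $b_q(T, s) = \gamma_q(T, q^{-s}) F_q(T, q^{-s})$. I would first invoke Siegel's mass formula: for $T \in \Lambda_n^+$ and $k > n+1$ even,
\[a(E_k^{(n)}, T) = c_n(k) \prod_q b_q(T, k),\]
where $c_n(k)$ is an explicit archimedean constant. After applying the functional equations of $\zeta$ and (when $n$ is even) of $L(s, \chi_T)$ to move the Gamma factors across, $c_n(k)$ reduces to a power of $2$ times the inverse of a product of zeta values at the negative integers $1-k$ and $1-(2k-2i)$, together with $L(1 - k + n/2, \chi_T)^{-1}$ when $n$ is even.

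Next, I would substitute the factorization $b_q = \gamma_q F_q$ and compute the global Euler product $\prod_q \gamma_q(T, q^{-s})$ explicitly. A direct check yields
\[\prod_q \gamma_q(T, q^{-s}) = \zeta(s)^{-1} \prod_{i=1}^{[n/2]} \zeta(2s - 2i)^{-1} \cdot \begin{cases} L(s - n/2, \chi_T) & n \text{ even},\\ 1 & n \text{ odd},\end{cases}\]
using that $\xi_q(T) = \chi_T(q)$ at every prime $q$ unramified in $\QQ(\sqrt{(-1)^{n/2}\det(2T)})$ and that both Euler factors are trivial at the ramified primes. Specializing to $s = k$, converting special values via $\zeta(1-m) = -B_m/m$ and $L(1-m, \chi_T) = -B_{m, \chi_T}/m$, and combining with $c_n(k)$, produces the Bernoulli-number ratios $k/B_k$, $(2k-2i)/B_{2k-2i}$, and $(k - n/2)/B_{k-n/2, \chi_T}$, while the $F_q$ factors reassemble into $\prod_q F_q(T, q^{k-n-1})$. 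The apparent shift from $s = k$ to $X = q^{k - n - 1}$ simply reflects the normalization of $F_q$ adopted in \cite{K99}.

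The main obstacle is not conceptual but combinatorial: tracking the sign $(-1)^{[(n+1)/2]}$ and the precise power $2^{n - [n/2]}$ requires careful accounting of the signs contributed by each negative-argument zeta (or $L$) value, by the root number in the functional equation of $L(s, \chi_T)$, and by the factor $\det(2T)^{1/2}$ relating half-integral to integral matrices. These normalizations are spelled out in \cite{K99} and in standard references on the Siegel mass formula, so my presentation would follow those conventions and verify that the constants match, rather than redo the full local computation from scratch.
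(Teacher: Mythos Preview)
The paper does not supply its own proof of this proposition: it is stated as a known formula, essentially quoted from \cite{Kat-Nag} (see also Remark~\ref{rem.comariosn-KN} immediately following it, which compares the present $F_q$ with the polynomial in \cite[Theorem 2.2]{Kat-Nag}). Your outline---Siegel's Fourier expansion in terms of $\prod_q b_q(T,k)$, the factorization $b_q=\gamma_q F_q$, the identification of $\prod_q \gamma_q(T,q^{-s})$ with $\zeta(s)^{-1}\prod_i\zeta(2s-2i)^{-1}$ times $L(s-n/2,\chi_T)$, and the conversion of special values to Bernoulli numbers---is exactly the standard route to such a formula and is correct in substance.

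One point deserves more care than you give it. In the paper's conventions, $F_q(T,q^{-s})=b_q(T,s)/\gamma_q(T,q^{-s})$, so inserting $s=k$ produces $F_q(T,q^{-k})$, not $F_q(T,q^{k-n-1})$. The passage between these is not merely a ``normalization'' but an application of the functional equation of $F_q$ (\cite[Theorem 3.2]{K99}), which for $n$ odd reads $F_q(T,q^{-n-1}X^{-1})=\eta_q(T)\,(q^{(n+1)/2}X)^{\mathrm{ord}_q(2^{?}\det T)}F_q(T,X)$ and for $n$ even involves $\xi_q(T)$ in place of $\eta_q(T)$. The extra factors---a global power of $\det(2T)$ and, when $n$ is odd, the product $\prod_q\eta_q(T)=(-1)^{(n^2-1)/8}$---must be matched against the archimedean constant; this is precisely where the sign $(-1)^{[(n+1)/2]}$ and the exact power of $2$ come from. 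You acknowledge that tracking these constants is the main obstacle, but you should not fold the functional-equation step into ``normalization'': it is a genuine computation, and getting it wrong would leave you with $F_q(T,q^{-k})$ and an uncompensated $\det(2T)$-power, which does not match the stated formula.
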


\begin{remark}\label{rem.comariosn-KN}
Let $F_q^{(n)}(T,X)$ be the polynomial in \cite[Theorem 2.2]{Kat-Nag}.
Then it coincides with $F_q(T,X)$ if $n$ is even. But
it is $\eta_q(T)F_q(T,X)$ if $n$ is odd.
\end{remark}

 \begin{proposition} \label{prop.limit-of-Kitaoka-polynomial}
Let $T \in \calh_n(\ZZ_q)^{\rm nd}$.
\begin{itemize}
\item[{\rm (1)}] Let $q \not=p$. Then
we have
\begin{align*}
&\lim_{m\to\infty} F_q(T,q^{k_m-n-1})=F_q(T, q^{2-n-1}).\\
\end{align*}
\item[{\rm (2)}]  We have 
\begin{align*}
&\lim_{m \to\infty} F_p(T,p^{k_m-n-1})=1
\end{align*}
\end{itemize}
\end{proposition}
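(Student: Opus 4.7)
The plan for both parts is to exploit the fact, recorded in the paragraph preceding the proposition, that $F_q(T,X)$ is a polynomial in $X$ with coefficients in $\ZZ$. Consequently, the evaluation map $X \mapsto F_q(T,X)$ is continuous on $\QQ_p$ in the $p$-adic topology, and both parts reduce to identifying the $p$-adic limit of the scalar $q^{k_m-n-1}$ as $m \to \infty$ and then appealing to continuity.

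For part (1), fix $q \neq p$, so that $q \in \ZZ_p^\times$. Factor
\[
q^{k_m-n-1} = q^{\,2-n-1}\cdot q^{(p-1)p^{m-1}}.
\]
Euler's theorem in the strengthened form $q^{(p-1)p^{m-1}} \equiv 1 \pmod{p^m}$ (a routine induction from $q^{p-1} \equiv 1 \pmod p$) shows $q^{k_m-n-1} \to q^{\,2-n-1}$ in $\ZZ_p^\times$. Continuity of the polynomial $F_q(T,\cdot)$ then yields the claimed limit.

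For part (2), take $m$ large enough that $k_m - n - 1 = (p-1)p^{m-1} + 1 - n \geq 0$; then $p^{k_m-n-1} \to 0$ in $\ZZ_p$. It therefore suffices to verify that $F_p(T,0) = 1$. This is part of the standard properties of $F_q(T,X)$ recorded in \cite[Theorem 3.2]{K99}: only the zero term in the defining sum of the Siegel series survives as $\mathrm{Re}(s) \to \infty$, giving $b_p(T,s) \to 1$; combined with the evident identity $\gamma_p(T,0) = 1$ read off from the definition of $\gamma_p$, this forces $F_p(T,0) = 1$. Evaluating at the limit now gives $F_p(T, p^{k_m-n-1}) \to F_p(T, 0) = 1$.

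No step presents a serious obstacle; the only substantive input is the polynomiality of $F_q(T,X)$ with integer coefficients together with the explicit constant term, both of which are already supplied by the cited results. The remainder is a continuity argument combined with the elementary computation of $\lim_m q^{(p-1)p^{m-1}}$ in $\ZZ_p$ via Euler's theorem.
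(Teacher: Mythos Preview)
Your proof is correct and follows essentially the same approach as the paper: the paper's argument is simply ``By construction, $F_q(T,X)$ belongs to $\ZZ[X]$ and its first coefficient is $1$. Thus the assertion holds.'' You have spelled out the two ingredients (polynomiality with integer coefficients, constant term equal to $1$) together with the elementary limit computations via Euler's theorem and $p^{k_m-n-1}\to 0$, which the paper leaves implicit.
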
 
\begin{proof}
By construction, $F_q(T,X)$ belongs to $\ZZ[X]$ and its first coefficient is $1$. 
Thus the assertion holds.
\end{proof} 
\begin{lemma} \label{lem.vanishing-of-F}
Let $T \in \calh_3(\ZZ_q)^{\rm nd}$.
Suppose that $\eta_q(T)=-1$. Then
\[F_q(T,q^{-2})=0.\]
\end{lemma}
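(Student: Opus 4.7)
The plan is to deduce the vanishing from the functional equation satisfied by $F_q(T,X)$ when $T$ has odd degree, which was established by the first author in \cite[Theorem 3.2]{K99}. For $n = 2m+1$ this functional equation takes the form
$$F_q(T, q^{-n-1}X^{-1}) = \eta_q(T)\,(q^{(n+1)/2}X)^{e(T)}\,F_q(T,X)$$
for a certain integer $e(T)$ (essentially determined by $\ord_q(\det(2T))$ and the degree of $F_q$ as a polynomial in $X$). The structural feature I intend to exploit is that the scalar factor on the right-hand side is $\eta_q(T)$ multiplied by a monomial in $q^{(n+1)/2}X$; consequently, at the fixed point $X_0 := q^{-(n+1)/2}$ of the involution $X \mapsto q^{-n-1}X^{-1}$, this monomial trivializes to $1$ and the scalar factor collapses to $\eta_q(T)$.

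Specializing to $n = 3$, the fixed point becomes $X_0 = q^{-2}$, which is exactly the value at which we wish to show $F_q(T,X)$ vanishes. Substituting $X = q^{-2}$ into the functional equation, the left-hand side is $F_q(T, q^{-4} \cdot q^{2}) = F_q(T, q^{-2})$, and the scalar factor on the right reduces to $\eta_q(T) \cdot (q^{2} \cdot q^{-2})^{e(T)} = \eta_q(T)$. This yields the identity
$$F_q(T, q^{-2}) = \eta_q(T)\, F_q(T, q^{-2}).$$
Under the hypothesis $\eta_q(T) = -1$ we obtain $F_q(T, q^{-2}) = -F_q(T, q^{-2})$, and hence $F_q(T, q^{-2}) = 0$.

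The only real obstacle is the bookkeeping needed to quote the functional equation in \cite{K99} in precisely the right normalization, so that the scalar factor genuinely takes the form $\eta_q(T) \cdot (q^{(n+1)/2}X)^{e(T)}$ with no additional sign or twist. Once this is verified, the argument is immediate and requires no further computation of Siegel series. (Equivalently, one can phrase the same argument via $\widetilde b_q(T,X) = \gamma_q(T,X)F_q(T,X)$, noting that $\gamma_q(T, q^{-2}) = (1-q^{-2})^2 \neq 0$, so the vanishing forced by the functional equation at the symmetry center must be carried by the factor $F_q(T, q^{-2})$.)
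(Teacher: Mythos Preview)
Your proposal is correct and follows essentially the same approach as the paper: both invoke the functional equation $F_q(T,q^{-4}X^{-1})=\eta_q(T)(q^2X)^{\ord_q(2^2\det T)}F_q(T,X)$ from \cite[Theorem 3.2]{K99}, substitute the fixed point $X=q^{-2}$ so that the monomial factor becomes $1$, and conclude $F_q(T,q^{-2})=-F_q(T,q^{-2})$. The paper simply states the exponent explicitly as $\ord_q(2^2\det T)$ rather than leaving it as a generic $e(T)$, so your caveat about normalization is already resolved there.
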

\begin{proof} By the functional equation of $F_q(T,X)$ (cf. \cite[Theorem 3.2]{K99}), we have
\begin{align*}
F_q(T,q^{-4}X^{-1}) & =\eta_q(T)(q^2X)^{\mathrm{ord}_q(2^{2}\det T)}F_q(T,X)\\
& =-(q^2X)^{\mathrm{ord}_q(2^{2}\det T)}F_q(T,X).
\end{align*}
Hence, we have 
\begin{align*}
F_q(T,q^{-2})=-F_q(T,q^{-2}).
\end{align*}
Therefore, the assertion holds.
\end{proof}
Here we prepare some result on Bernoulli numbers.
\begin{lemma}\;{\rm (Carlitz \cite[{\sc Theorem 3}]{Carlitz1})}
\label{lem.limit-of-Bernoulli-number}
Assume that $p$ is an odd prime number.  
For $t=rp^k(p-1)$\,{\rm (}$r\in\mathbb{Z}_{>0},\,k\in\mathbb{Z}_{\geq 0}${\rm )}, 
the numerator of
$\displaystyle B_t+\frac{1}{\,p\,}-1$ is divisible by $p^k$. In particular
$$
\lim_{k\to\infty}B_{r(p-1)p^k}=1-\frac{1}{\,p\,}.
$$
for any $r\in\mathbb{Z}_{>0}$.
\end{lemma}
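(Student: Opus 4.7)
The plan is to reduce the statement to an explicit $p$-adic congruence for $pB_t$ and then establish that congruence by combining Fermat--Euler with the Faulhaber summation and the von Staudt--Clausen theorem. Since $(p-1)\mid t$, von Staudt--Clausen gives $pB_t+1\in\ZZ_p$, so the claim that the numerator of $B_t+\tfrac{1}{p}-1$ is divisible by $p^k$ is equivalent to the single $p$-adic congruence
$$
pB_t \;\equiv\; p-1 \pmod{p^{k+1}}.
$$

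To access $pB_t$ I would use Faulhaber's formula
$$
\sum_{a=1}^{p-1} a^t \;=\; \frac{B_{t+1}(p)-B_{t+1}}{t+1} \;=\; pB_t \;+\; \sum_{j=0}^{t-1}\frac{\binom{t}{j}}{t+1-j}\,B_j\,p^{t+1-j},
$$
together with the arithmetic input that $t=rp^k(p-1)$ is a multiple of $\varphi(p^{k+1})$, so Euler's theorem yields $a^t\equiv 1\pmod{p^{k+1}}$ for every $a$ coprime to $p$; summing over $a=1,\dots,p-1$ gives
$$
\sum_{a=1}^{p-1} a^t \;\equiv\; p-1 \pmod{p^{k+1}}.
$$
It then remains to show that every remainder term $T_j:=\tfrac{\binom{t}{j}}{t+1-j}\,B_j\,p^{t+1-j}$, for $0\le j\le t-1$, lies in $p^{k+1}\ZZ_p$.

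For this final estimate, set $u=t+1-j\ge 2$. Since $v_p(B_j)\ge -1$ (with equality possible only when $(p-1)\mid j$, again by von Staudt--Clausen), it suffices to verify
$$
(u-v_p(u)) \;+\; v_p\!\tbinom{t}{j} \;\ge\; k+2.
$$
When $u$ is large relative to $k$, the factor $p^u/u$ alone contributes enough; when $u$ is small, I would invoke Kummer's theorem, which identifies $v_p(\binom{t}{j})$ with the number of carries in the base-$p$ addition $j+(u-1)=t$, and then use the fact that the special shape $t=rp^k(p-1)$ forces sufficiently many carries for small $u$. Odd-$j$ terms are either zero (for $j\ge 3$) or contribute the harmless $-p^t/2$ (for $j=1$). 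Combining the three ingredients yields the desired congruence, and letting $k\to\infty$ with $r$ fixed gives the limit statement. The main obstacle is this last case analysis, where one must simultaneously exploit the high $p$-part of $t$ and the carries in $\binom{t}{j}$ to absorb the $1/p$ denominator of $B_j$ in the regime $(p-1)\mid j$.
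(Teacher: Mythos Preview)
The paper does not give its own proof of this lemma: it is quoted from Carlitz \cite[Theorem 3]{Carlitz1} and used as a black box. So there is nothing to compare against, and your proposal amounts to an independent proof of Carlitz's congruence.

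Your strategy is correct. The reduction to $pB_t\equiv p-1\pmod{p^{k+1}}$, the Faulhaber expansion, and the Euler--Fermat evaluation of $\sum_{a=1}^{p-1}a^t$ are all fine. The remainder estimate can also be completed, and in fact more simply than you suggest: you do not need the full carry count from Kummer's theorem. For $k\ge 1$ (the case $k=0$ being vacuous) one has $p^k\mid t$, and the single identity $\binom{t}{j}=\binom{t}{u-1}=\tfrac{t}{\,u-1\,}\binom{t-1}{u-2}$ already gives $v_p\binom{t}{j}\ge k-v_p(u-1)$ for every $u\ge 2$. Feeding this into your expression for $T_j$ together with $v_p(B_j)\ge -1$ yields
\[
v_p(T_j)\;\ge\;k+(u-1)-v_p\bigl(u(u-1)\bigr),
\]
and since $u,u-1$ are coprime and $p\ge 3$, one checks directly that $u-1\ge 1+v_p(u(u-1))$ for all $u\ge 2$. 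Hence $v_p(T_j)\ge k+1$ uniformly, with no separate ``small~$u$'' regime to worry about. So the obstacle you flag dissolves once one extracts the factor $t$ from the binomial coefficient rather than counting carries.
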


Now we obtain the following theorem. 
\begin{theorem} \label{th.limit-of-FC-of-Eisenstein}
\begin{itemize}
\item[{\rm (1)}] Let $T \in \Lambda_3^+$.
Then, we have
\begin{align*}
a(\widetilde E_2^{(3)},T)=\frac{576}{(1-p)^2} \prod_{q \not=p} F_q(T,q^{-2}),
\end{align*}
and in particular if  $\eta_p(T)=1$, we have
\begin{align*}
a(\widetilde E_2^{(3)},T)=0.
\end{align*}
\item[{\rm (2)}] Let $T \in \Lambda_4^+$.
\begin{itemize}
\item[{\rm (2.1)}] Suppose that $\det (2T)$ is not square. Then
\[a(\widetilde E_2^{(4)},T)=0.\]
\item[{\rm (2.2)}] Suppose that $\det (2T)$ is  square. Then
\begin{align*}
a(\widetilde E_2^{(4)},T)=\frac{1152}{(1-p)^2} \prod_{q \not=p} F_q(T,q^{-3}).
\end{align*}
In particular if $\eta_p(T)=1$ or  $\mathrm{ord}_p(\det(2T)) =0$, then
\[a(\widetilde E_2^{(4)},T)=0.\]
\end{itemize}
\end{itemize}
\end{theorem}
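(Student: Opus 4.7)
My plan is to apply Proposition~\ref{prop.FC-Siegel} with $k = k_m = 2 + (p-1)p^{m-1}$ and then take the $p$-adic limit $m \to \infty$ of each factor appearing in the resulting product. Proposition~\ref{prop.limit-of-Kitaoka-polynomial} immediately handles the local factors $\prod_q F_q(T, q^{k_m - n - 1})$: the factor at $q = p$ tends to $1$ and each factor at $q \neq p$ tends to $F_q(T, q^{1-n})$, which is $F_q(T, q^{-2})$ for $n = 3$ and $F_q(T, q^{-3})$ for $n = 4$. The substance of the theorem is then the identification of the constant coming from the product of Bernoulli factors, together with the vanishing statements.

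For $n = 3$ the only Bernoulli factors are $k_m/B_{k_m}$ and $(2k_m-2)/B_{2k_m - 2}$. Both exponents are $\equiv 2 \pmod{p-1}$ and tend $p$-adically to $2$, so Kummer's congruence (equivalently, continuity of $\zeta_p(s)$ at $s = -1$) gives $\lim (1 - p^{k_m - 1}) B_{k_m}/k_m = (1-p)B_2/2 = (1-p)/12$, and since $p^{k_m - 1} \to 0$ $p$-adically, the Euler factor disappears, yielding $\lim k_m/B_{k_m} = 12/(1-p)$; the same holds for $(2k_m-2)/B_{2k_m-2}$. Multiplying by the prefactor $2^{n-[n/2]} = 4$ produces $4 \cdot 144/(1-p)^2 = 576/(1-p)^2$. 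The vanishing when $\eta_p(T) = 1$ follows because the product formula gives $\prod_q \eta_q(T) = (-1)^{(n^2-1)/8} = -1$ for $n = 3$, so some $q \neq p$ has $\eta_q(T) = -1$, and Lemma~\ref{lem.vanishing-of-F} then forces $F_q(T, q^{-2}) = 0$.

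The case $n = 4$ requires a more delicate joint-limit analysis because both $2k_m - 4 = 2(p-1)p^{m-1}$ and $k_m - 2 = (p-1)p^{m-1}$ vanish $p$-adically, while $k_m/B_{k_m}$ and $(2k_m-2)/B_{2k_m-2}$ again tend to $12/(1-p)$. When $\det(2T)$ is not a square, $\chi_T$ is a nontrivial Dirichlet character with conductor coprime to $p$, and continuity of the Kubota--Leopoldt $L_p(s, \chi_T)$ (which is entire since $\chi_T$ is nontrivial) gives $B_{k_m - 2, \chi_T}/(k_m - 2) \to -L_p(1, \chi_T)$, a finite value. Meanwhile Carlitz's Lemma~\ref{lem.limit-of-Bernoulli-number} gives $B_{2k_m - 4} \to (p-1)/p$, so $(2k_m - 4)/B_{2k_m - 4}$ has $p$-adic valuation $m$ and tends to $0$; their product therefore tends to $0$, proving (2.1). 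When $\det(2T)$ is a square, $\chi_T$ is trivial and $B_{k_m-2, \chi_T} = B_{k_m - 2}$. Writing $B_{(p-1)p^{m-1}} = (p-1)/p + p^{m-1} y_m$ and $B_{2(p-1)p^{m-1}} = (p-1)/p + p^{m-1} x_m$ with $x_m, y_m \in \ZZ_p$ by Carlitz, a direct expansion gives
\[
\frac{2k_m-4}{B_{2k_m-4}} \cdot \frac{B_{k_m-2}}{k_m-2} \xrightarrow{m \to \infty} 2,
\]
so the total constant becomes $4 \cdot (12/(1-p))^2 \cdot 2 = 1152/(1-p)^2$. The vanishing under $\eta_p(T) = 1$ or $\mathrm{ord}_p(\det(2T)) = 0$ is handled as in $n = 3$, using $\prod_q \eta_q(T) = (-1)^{n(n-2)/8} = -1$ for $n = 4$ together with an $n = 4$ analog of Lemma~\ref{lem.vanishing-of-F} that yields $F_q(T, q^{-3}) = 0$ whenever $\eta_q(T) = -1$ and $\det(2T)$ is a square; the condition $\mathrm{ord}_p(\det(2T)) = 0$ reduces to $\eta_p(T) = 1$ because a unimodular 4-lattice at $p$ of square unit discriminant has $\eta_p = 1$.

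The main obstacle is the joint limit in the $n = 4$ setting: the individual factors $(2k_m - 4)/B_{2k_m-4}$ and $B_{k_m-2, \chi_T}/(k_m-2)$ do not converge separately, and one must track the precise leading $p$-adic digits supplied by Carlitz's lemma to extract the exact limit $2$ in the trivial-character case, while in the nontrivial case one must invoke the theory of Kubota--Leopoldt $p$-adic $L$-functions to guarantee that $B_{k_m-2,\chi_T}/(k_m-2)$ remains bounded. A secondary delicate point is the $n = 4$ analog of Lemma~\ref{lem.vanishing-of-F}, which must be extracted from the functional equation of $F_q(T, X)$; the natural symmetry point $q^{-(n+1)/2}$ is not an integer power of $q$ for even $n$, so this argument is less direct than in the odd case and relies on the restriction to square discriminant.
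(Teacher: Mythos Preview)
Your overall strategy matches the paper's almost exactly: Proposition~\ref{prop.FC-Siegel} plus Proposition~\ref{prop.limit-of-Kitaoka-polynomial} for the local factors, Kummer for the two Bernoulli ratios $k_m/B_{k_m}$ and $(2k_m-2)/B_{2k_m-2}$, and in degree $4$ the joint treatment of the divergent pair $(2k_m-4)/B_{2k_m-4}$ and $B_{k_m-2,\chi_T}/(k_m-2)$. In the square case the paper simply writes this pair as $2\,B_{(p-1)p^{m-1}}/B_{2(p-1)p^{m-1}}$ and applies Lemma~\ref{lem.limit-of-Bernoulli-number} to numerator and denominator separately; your $x_m,y_m$ expansion is the same thing. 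In the non-square case the paper only needs that $B_{(p-1)p^{m-1},\chi_T}/((p-1)p^{m-1})$ is $p$-integral (Carlitz \cite{Carlitz2}), not that it converges to a specific $L_p$-value; note also that your parenthetical ``conductor coprime to $p$'' is not always true, though it is irrelevant to the argument. The paper also singles out $p=3$, where the standard Kummer congruence does not cover $B_{k_m}/k_m$ and one has to invoke an extended version (Fresnel \cite{Fr}); you should flag this.

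The one genuine gap is the vanishing in (2.2). You propose an ``$n=4$ analog of Lemma~\ref{lem.vanishing-of-F}'' coming from the functional equation, but for $n=4$ the fixed point of $X\mapsto q^{-5}X^{-1}$ is $q^{-5/2}$, not $q^{-3}$, and the sign in the functional equation is governed by $\xi_q(T)$ (which equals $1$ under the square-discriminant hypothesis), not by $\eta_q(T)$. So the functional equation alone does \emph{not} force $F_q(T,q^{-3})=0$ when $\eta_q(T)=-1$. The paper instead makes a forward reference to Corollary~\ref{add-cor2-2}, whose proof is representation-theoretic: if $\det(2T)$ is a square in $\ZZ_q$ and $\eta_q(T)=-1$, then $T$ and $H_2$ are inequivalent rank-$4$ forms over $\QQ_q$, so $\alpha_q(H_2,T)=0$; Proposition~\ref{prop.local-density-at-q}\,(2) then gives $F_q(T,q^{-3})=0$. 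You should replace your proposed functional-equation argument by this density argument (or at least combine the functional equation, which relates $F_q(T,q^{-2})$ and $F_q(T,q^{-3})$, with the vanishing of $\alpha_q(H_2,T)$).
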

\begin{proof}
(1) We have
\begin{align*} (\star)\qquad\qquad
\lim_{m\to\infty} \frac{B_{k_m}}{k_m}=
\lim_{m\to\infty} \frac{B_{2k_m-2}}{2k_m-2}=
\frac{(1-p)B_2}{2}=\frac{1-p}{12}.
\end{align*}
These identities can be proved by Kummer's congruence when $p>3$.
For $p=3$, these identities are shown using the extended Kummer's congruence
(cf. \cite[COROLLAIRE 3]{Fr}). This proves the first part of the assertion.
Suppose that $\eta_p(T)=1$.
We note that 
\[\prod_q \eta_q(T)=-1.\]
Therefore, if $\eta_p(T)=1$, there is a prime number $q \not=p$ such that $\eta_q(T)=-1$. Therefore, by Lemma \ref{lem.vanishing-of-F}, we have $F_q(T,q^{-2})=0$. This proves the second part of the assertion.

(2)  Put
\[\calb_{T,m}=\Big(\frac{2k_m-4}{B_{2k_m-4}} \Big)\Big(\frac{B_{k_m-2,\chi_T}}{k_m-2}\Bigr).\]
Then 
\[\calb_{T,m} =\Big(\frac{2(p-1)p^{m-1}}{B_{2(p-1)p^{m-1}}} \Big)
                                           \Big(\frac{B_{(p-1)p^{m-1},\chi_T}}{(p-1)p^{m-1}}\Bigr).\]
First suppose that $\det (2T)$ is not square. 
Then, $\chi_T$ is non-trivial, and $\frac{B_{(p-1)p^{m-1},\chi_T}}{(p-1)p^{m-1}}$ is  $p$-integral 
for any $m \ge 1$ (cf. \cite{Carlitz2}). Hence, by the theorem of von Staudt-Clausen, 
\[\lim_{m\to\infty} \calb_{T,m}=0.\]
This proves the assertion (2.1).
Next suppose that $\det (2T)$ is square. Then, $B_{(p-1)p^{m-1},\chi_T}=B_{(p-1)p^{m-1}}$, and
\[\calb_{T,m} =2\frac{B_{(p-1)p^{m-1}}}{B_{2(p-1)p^{m-1}}}.\]
By Lemma \ref{lem.limit-of-Bernoulli-number}, 
\[\lim_{m\to\infty} \calb_{T,m}=2\cdot\lim_{m\to\infty}\frac{B_{(p-1)p^{m-1}}}{B_{2(p-1)p^{m-1}}}=2.\]
This fact, together with $(\star)$, proves the first part of the assertion (2.2).
To prove the remaining part, first suppose that $\eta_p(T)=1$.
We note that 
\[\prod_q \eta_q(T)=-1.\]
Therefore, there is a prime number $q \not=p$ such that $\eta_q(T)=-1$. 
We have $F_q(T,q^{-3})=0$ as will be proved in Corollary \ref{add-cor2-2}.
Next suppose that $\mathrm{ord}_p(\det (2T)) =0$. Then, $\eta_p(T)=1$, and the assertion follows from the above. 
\end{proof}
\subsubsection{Computation of
 $\boldsymbol{a(\text{genus}\,\Theta^{(n)}(S^{(p)}),T)}$ for $\boldsymbol{n=3}$ or $\boldsymbol{4}$}
 \label{LC34}
For $S \in \calh_m(\ZZ_q)^{\nd}$ and $T \in \calh_n(\ZZ_q)^{\nd}$ with $m \ge n$, we define the local density 
$\alpha_q(S,T)$ as
\[\alpha_q(S,T)=2^{-\delta_{m,n}}\lim_{e\to\infty} q^{e(-mn+n(n+1)/2)}\cala_e(S,T),
\]
where 
\begin{align*}
\cala_e(S,T)=\{X \in M_{mn}(\ZZ_q)/q^eM_{mn}(\ZZ_q) \ | \ S[X] \equiv T\!\pmod{ q^e \calh_n(\ZZ_q)}\,\},
\end{align*}
and $\delta_{m,n}$ is the Kronecker delta.

We say that an element $X$ of $M_{mn}(\ZZ_q)$ with $m \ge n$ is
{\it primitive} if $\mathrm{rank}_{\ZZ_q/q\ZZ_q} X=n$.
We also say that an element $X \mod q^e$ of $M_{mn}(\ZZ_q)/q^eM_{mn}(\ZZ_q)$ is {\it primitive} if $X$ is primitive. 
This definition does not depend on the choice of $X$. 
From now on, for $X \in M_{mn}(\ZZ_q)$, we often use the same symbol $X$ to denote the class of $X$ mod $q^e$. 
For  $S \in \calh_m(\ZZ_q)^{\nd}$ and $T \in \calh_n(\ZZ_q)$ with $m \ge n$, we define the primitive local density $\beta_q(S,T)$ as 
\[\beta_q(S,T)=2^{-\delta_{m,n}}\lim_{e\to\infty} q^{e(-mn+n(n+1)/2)}\calb_e(S,T),\]
where
\[\calb_e(S,T)=\{X \in \cala_e(S,T) \ | \ X \text{ is primitive}\}.
\]

The following is due to \cite{Ki1}.
\begin{lemma} \label{lem.local-density-via-primitive-local-density}
Let $S \in \calh_m(\ZZ_q)^{\rm nd}$ and $T \in \calh_n(\ZZ_q)^{\rm nd}$ with $m \ge n$. Then
\[\alpha_q(S,T)=\sum_{g \in GL_n(\ZZ_q) \backslash M_n(\ZZ_q)^{\rm nd}} q^{(-m+n+1) \det g}\beta_q(S,T[g^{-1}]).\]
\end{lemma}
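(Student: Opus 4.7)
The plan is to use the \emph{content decomposition}: every $X \in M_{mn}(\ZZ_q)$ of rank $n$ factors as $X = Yg$ with $Y \in M_{mn}(\ZZ_q)$ primitive and $g \in M_n(\ZZ_q)^{\nd}$, the right factor $g$ being determined up to left multiplication by $GL_n(\ZZ_q)$ (extracted from elementary divisors). Under this factorization $S[X] = (S[Y])[g]$, so the congruence $S[X] \equiv T$ translates into $S[Y] \equiv T[g^{-1}]$; in particular this already forces $T[g^{-1}] \in \calh_n(\ZZ_q)$ for the class $[g]$ to contribute anything.

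Next I would partition $\cala_e(S,T) = \bigsqcup_{[g]} \cala_{e,g}(S,T)$ according to the content class $[g]$, and parametrize each piece by primitive $Y$'s. Placing $g$ in Smith normal form $g \sim \mathrm{diag}(q^{a_1},\ldots,q^{a_n})$ with $d = a_1+\cdots+a_n = \ord_q(\det g)$, the $i$-th column of $X$ equals $q^{a_i}$ times the $i$-th column of $Y$; a column-by-column count yields
\[\#\cala_{e,g}(S,T) \;=\; q^{c(m,n)\,d}\,\#\calb_{e - 2d}\bigl(S,\,T[g^{-1}]\bigr)\]
for a combinatorial exponent $c(m,n)$ one extracts by tracking how each $Y$-column, truncated modulo $q^{e-a_i}$, specifies $X \pmod{q^e}$, while the $(i,j)$-entry of $S[Y]$ is constrained modulo $q^{e-a_i-a_j}$.

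Multiplying both sides by the normalization $q^{e(-mn+n(n+1)/2)}$ and letting $e\to\infty$ extracts $\beta_q(S,T[g^{-1}])$ on the right; the discrepancy between the $e$-normalization of $\alpha_q$ and the $(e-2d)$-normalization inside $\beta_q$ contributes a further factor $q^{2d(-mn+n(n+1)/2)}$, and combined with $q^{c(m,n)\,d}$ the total exponent collapses to exactly $(-m+n+1)d$, reproducing the stated formula (with the symbol ``$\det g$'' in the statement read as $\ord_q(\det g)$). Summing over $[g] \in GL_n(\ZZ_q)\backslash M_n(\ZZ_q)^{\nd}$ then gives the decomposition of $\alpha_q(S,T)$.

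The main obstacle is precisely this $q$-power bookkeeping: the congruence $S[X]\equiv T \pmod{q^e \calh_n(\ZZ_q)}$ transfers to $S[Y]$ only with \emph{entry-dependent} truncations $q^{e-a_i-a_j}$, so one cannot simply invert $Y\mapsto Yg$ uniformly. The key technical move is to first diagonalize $g$ via Smith normal form, convert the constraint into independent column-wise conditions on $Y$, and then verify that the resulting primitive count stabilizes as $e\to\infty$ to the normalization used in the definition of $\beta_q$, with all $e$-dependence cancelling and leaving exactly the weight $q^{(-m+n+1)\ord_q(\det g)}$.
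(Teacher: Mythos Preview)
The paper does not give its own proof of this lemma; it simply attributes the result to Kitaoka \cite{Ki1} and moves on. Your content-decomposition strategy (factor $X=Yg$ with $Y$ primitive, put $g$ in Smith normal form, and track the $q$-powers through the normalizations) is exactly the standard argument underlying Kitaoka's result, and your reading of ``$\det g$'' in the exponent as $\ord_q(\det g)$ is the intended one.

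One small remark on the bookkeeping you flag as the main obstacle: the entry-dependent truncations $q^{\,e-a_i-a_j}$ indeed prevent a clean identity $\#\cala_{e,g}=q^{c(m,n)d}\#\calb_{e-2d}$ at every finite level, but this is harmless. For $e$ large relative to $\ord_q(\det T)$ the normalized counts $q^{-eD}\#\cala_e$ and $q^{-eD}\#\calb_e$ (with $D=mn-n(n+1)/2$) stabilize \emph{exactly}, not just in the limit; so once $e$ exceeds a threshold depending only on $T$ and $g$, one may compare counts at a single level and the column-by-column argument goes through verbatim. Only finitely many classes $[g]$ contribute (those with $T[g^{-1}]\in\calh_n(\ZZ_q)$), so the sum is finite and no convergence issue arises.
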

For $S \in \Sym_m(\ZZ_q)^{\nd}$ and $T \in \Sym_n(\ZZ_q)^{\nd}$  we also define another local density $\widetilde \alpha_q(T,S)$ as 
\begin{align*}
\widetilde \alpha_q(T,S)=2^{-\delta_{m,n}}\lim_{e\to\infty} q^{e(-mn+n(n+1)/2)}\widetilde \cala_e(T,S),
\end{align*}
where 
\begin{align*}
\widetilde \cala_e(T,S)=\{X \in M_{mn}(\ZZ_q)/q^eM_{mn}(\ZZ_q) \ | 
                                          \ S[X] \equiv T \!\pmod{q^e \mathrm{Sym}_n(\ZZ_q)}\,\}.
\end{align*}
\begin{remark} \label{rem.comparison-densities}
We note that
\begin{align*}
\widetilde \alpha_q(2T,2S)=2^{m\delta_{2,p}}\alpha_q(S,T)
\end{align*}
for $S \in \calh_m(\ZZ_q)$ and $T \in \calh_n(\ZZ_q)$.
\end{remark}
\vspace{3mm}
For $S \in \Lambda_m^+$ and $T \in \Lambda_n^+$, put
\[a(S,T)=\#\{X \in M_{mn}(\ZZ) \ | \ S[X]=T\}.\]
Moreover put 
\[M(S)=\sum_{i=1}^{d} {1 \over a(S_i,S_i)} \]
and 
\[\widetilde a(S,T)=M(S)^{-1}\sum_{i=1}^d {a(S_i,T) \over a(S_i,S_i)},\]
where $S_i$ runs over a complete set of $GL_m(\ZZ)$-equivalence classes in $\text{genus}(S)$.
Then we have the following formula.
\begin{proposition} \label{prop.Siegel-main-theorem}
Under the above notation, we have
\begin{align*}
&\widetilde a(S,T)=\\
&2^n \vep_{n,m}\pi^{n(2m-n+1)/4}\prod_{i=1}^{n-1} \Gamma((m-i)/2)^{-1}(\det (2S))^{-n/2}(\det (2T))^{(m-n-1)/2}\\
&\times \prod_q \alpha_q(S,T),
\end{align*}
where
\[\vep_{n,m}=\begin{cases} 1/2  & \text{ if  either } m=n+1 \text{ or } m=n>1, \\
1 & \text{otherwise}. \end{cases}\]
\end{proposition}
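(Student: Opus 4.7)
The plan is to derive this identity directly from Siegel's classical main theorem on representations of a quadratic form by a genus, in the form given in standard references (Siegel's original Annals papers, or Kitaoka's monograph \emph{Arithmetic of Quadratic Forms}); the content of the proposition is then the translation between normalizations rather than the proof of a new statement. The starting point would be Siegel's theorem in the form
\begin{equation*}
\widetilde a(S,T) \;=\; \vep_{n,m}\cdot \sigma_\infty(2T,2S)\cdot \prod_q \widetilde\alpha_q(2T,2S),
\end{equation*}
where $\sigma_\infty$ denotes the archimedean (real) density and the factor $\vep_{n,m}$ accounts for the usual $\{\pm I\}$-symmetry that arises in the boundary cases $m=n$ and $m=n+1$.

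Next, I would evaluate $\sigma_\infty(2T,2S)$ explicitly. Parametrizing the representations $X\in M_{mn}(\RR)$ of $2T$ by $2S$ via a Gram--Schmidt / Iwasawa-type decomposition and computing the Jacobian yields the classical closed form
\begin{equation*}
\sigma_\infty(2T,2S) \;=\; \frac{\pi^{n(2m-n+1)/4}}{\prod_{j=1}^{n}\Gamma\!\bigl((m-j+1)/2\bigr)}\,(\det 2S)^{-n/2}(\det 2T)^{(m-n-1)/2}.
\end{equation*}
Then I would invoke Remark \ref{rem.comparison-densities} to convert the product of local densities into the convention used in this paper, obtaining
\[
\prod_q \widetilde\alpha_q(2T,2S) \;=\; 2^m\prod_q \alpha_q(S,T).
\]
Finally, I would isolate the $j=n$ factor $\Gamma((m-n+1)/2)$ in the $\Gamma$-product and combine it with the $2^m$ above and the remaining numerical constants. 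After this rearrangement, the quotient becomes $\prod_{i=1}^{n-1}\Gamma((m-i)/2)^{-1}$ and the accumulated $2$-powers produce the prefactor $2^n\vep_{n,m}$, matching exactly the form claimed.

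The main obstacle in carrying this out is purely bookkeeping between the several conventions in play: the factor $2^{-\delta_{m,n}}$ in the definition of $\alpha_q$ and $\widetilde\alpha_q$ used here versus the normalizations in the classical literature; the distinction between half-integral matrices in $\calh_n$ and the doubled integral symmetric matrices $2S,2T$; the asymmetry in the role of $S$ and $T$ in $\alpha_q$ vs.\ $\widetilde\alpha_q$; and the exact value of $\vep_{n,m}$, which must incorporate the $1/2$ coming from the action of $-I$ on the set of representations when $m=n$ or $m=n+1$. None of this is deep, but tracking these factors consistently is where any error is most likely to enter, and the verification is therefore the real work of the proof.
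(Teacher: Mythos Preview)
Your approach is essentially the paper's: both simply invoke Siegel's main theorem and then adjust normalizations. The paper proceeds more directly by citing Kitaoka's Theorem 6.8.1, which already delivers the archimedean factor with the Gamma product in the exact form $\prod_{i=1}^{n-1}\Gamma((m-i)/2)^{-1}$, so that only the conversion $\prod_q\widetilde\alpha_q(2T,2S)\to\prod_q\alpha_q(S,T)$ via Remark \ref{rem.comparison-densities} remains and no Gamma rearrangement is needed.

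Your final rearrangement step, however, is miscomputed. Your product $\prod_{j=1}^{n}\Gamma((m-j+1)/2)$ runs over the arguments $m/2,(m-1)/2,\ldots,(m-n+1)/2$, while the target $\prod_{i=1}^{n-1}\Gamma((m-i)/2)$ runs over $(m-1)/2,\ldots,(m-n+1)/2$. Both terminate at $(m-n+1)/2$, so isolating the $j=n$ factor $\Gamma((m-n+1)/2)$ cannot transform one into the other; the actual discrepancy is the $j=1$ factor $\Gamma(m/2)$. Consequently your claimed passage from $2^m$ to $2^n$ via this Gamma factor does not go through as written. (Note that in the paper's only application one has $m=4$, so $\Gamma(m/2)=\Gamma(2)=1$ and this discrepancy happens to be invisible.) You were right to flag the bookkeeping as the danger zone; this is precisely where your sketch slips.
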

\begin{proof}
We note that we have $\widetilde a(S,T)=\widetilde a(2S,2T)$. By \cite[Theorem 6.8.1]{Ki2} we have 
\begin{align*}
&\widetilde a(2S,2T)\\
&=\vep_{n,m}\pi^{n(2m-n+1)/4}\prod_{i=1}^{n-1} \Gamma((m-i)/2)^{-1}(\det (2S))^{-n/2}(\det (2T))^{(m-n-1)/2}\\
&\times \prod_q \widetilde \alpha_q(2T,2S).
\end{align*}
Then, the assertion follows from Remark \ref{rem.comparison-densities}.

\end{proof}


\begin{proposition} \label{prop.mass-formula}
\begin{itemize}
\item[{\rm (1)}] Let $T \in \Lambda_3^+$. Then we have
\[a(\mathrm{genus} \ \Theta^{(3)}(S^{(p)}),T)=8p^{-3}\pi^{4} \alpha_p(U_0 \bot pU_0,T)\prod_{q \not=p} \alpha_q(H_2,T).\]
\item[{\rm (2)}] Let $T \in \Lambda_4^+$. Then we have
\[a(\mathrm{genus}\  \Theta^{(4)}(S^{(p)}),T)=16p^{-4}\pi^{4} \det(2T)^{-1/2}\alpha_p(U_0 \bot pU_0,T)\prod_{q \not=p} \alpha_q(H_2,T).\]
\end{itemize}
\end{proposition}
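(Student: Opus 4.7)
The plan is to unwind the definition of the genus theta series so that its Fourier coefficient becomes the averaged representation number $\widetilde a(S^{(p)},T)$, and then to plug into Siegel's main theorem (Proposition \ref{prop.Siegel-main-theorem}) with $m=4$ and $n=3$ or $4$.

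First, comparing the definition of $\mathrm{genus}\,\Theta^{(n)}(S^{(p)})$ with the Fourier expansion $\theta^{(n)}(S_i;Z)=\sum_T a(S_i,T)\,\mathbf{e}(\mathrm{tr}(TZ))$, one reads off
\[
a(\mathrm{genus}\,\Theta^{(n)}(S^{(p)}),T)=M(S^{(p)})^{-1}\sum_{i=1}^d\frac{a(S_i,T)}{a(S_i,S_i)}=\widetilde a(S^{(p)},T),
\]
where $\{S_1,\ldots,S_d\}$ is a set of representatives for the $GL_4(\ZZ)$-classes in $\mathrm{genus}(S^{(p)})$. Thus everything reduces to evaluating the right-hand side of the Siegel mass formula in Proposition \ref{prop.Siegel-main-theorem}.

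Next, I would note that Lemma \ref{lem.existence-of-S} gives $\det(2S^{(p)})=2^4\cdot p^2/16=p^2$ and identifies the local classes of $S^{(p)}$: $S^{(p)}\sim_{GL_4(\ZZ_p)} U_0\bot pU_0$ and $S^{(p)}\sim_{GL_4(\ZZ_q)} H_2$ for $q\neq p$. Since $\alpha_q$ only depends on the local $GL_4(\ZZ_q)$-class of the first argument, this lets one substitute
\[
\prod_q \alpha_q(S^{(p)},T)=\alpha_p(U_0\bot pU_0,T)\prod_{q\neq p}\alpha_q(H_2,T).
\]

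The remaining work is routine specialization of the general formula. For $n=3$, $m=4$: $\vep_{3,4}=1/2$ (the case $m=n+1$); the exponent of $\pi$ is $3(2\cdot4-3+1)/4=9/2$; the Gamma product is $\Gamma(3/2)^{-1}\Gamma(1)^{-1}=2/\sqrt{\pi}$; the factor $(\det(2S^{(p)}))^{-3/2}=p^{-3}$; and $(\det(2T))^{(m-n-1)/2}=(\det(2T))^0=1$. Assembling, $2^3\cdot\tfrac12\cdot\pi^{9/2}\cdot\tfrac{2}{\sqrt\pi}\cdot p^{-3}=8p^{-3}\pi^4$, which yields part (1). For $n=4$, $m=4$: $\vep_{4,4}=1/2$; the exponent of $\pi$ is $4\cdot5/4=5$; the Gamma product is $\Gamma(3/2)^{-1}\Gamma(1)^{-1}\Gamma(1/2)^{-1}=(2/\sqrt\pi)\cdot1\cdot(1/\sqrt\pi)=2/\pi$; the factor $(\det(2S^{(p)}))^{-2}=p^{-4}$; and $(\det(2T))^{-1/2}$ is retained since now $m-n-1=-1$. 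Assembling, $2^4\cdot\tfrac12\cdot\pi^{5}\cdot\tfrac{2}{\pi}\cdot p^{-4}\cdot(\det(2T))^{-1/2}=16p^{-4}\pi^4(\det(2T))^{-1/2}$, which yields part (2).

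There is no real obstacle once Siegel's main theorem is invoked; the only thing to be careful about is to track the parity-dependent factors ($\vep_{n,m}$, the Gamma values at half-integers, and the $(\det(2T))^{(m-n-1)/2}$ term that differs between $n=3$ and $n=4$) so that the archimedean constants collapse to $8\pi^4/p^3$ and $16\pi^4/p^4$, respectively. After that, the local identification from Lemma \ref{lem.existence-of-S} finishes the proof.
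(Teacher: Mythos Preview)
Your proof is correct and follows essentially the same approach as the paper: apply Proposition~\ref{prop.Siegel-main-theorem} (Siegel's main theorem) with $m=4$ to compute the archimedean constant, then use Lemma~\ref{lem.existence-of-S} to replace $\alpha_q(S^{(p)},T)$ by $\alpha_p(U_0\bot pU_0,T)$ or $\alpha_q(H_2,T)$ according as $q=p$ or $q\neq p$. The paper's proof simply cites these two results without writing out the numerical specialization that you have (correctly) carried out.
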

\begin{proof}
(1) By Proposition \ref{prop.Siegel-main-theorem}, we have 
\[a(\mathrm{genus} \ \Theta^{(3)}(S^{(p)}),T)=8p^{-3}\pi^{4} \prod_{q } \alpha_q(S^{(p)},T).\]
By Lemma \ref{lem.existence-of-S}, we prove the assertion.

(2) Again by Proposition \ref{prop.Siegel-main-theorem}, we have 
\[a(\mathrm{genus} \ \Theta^{(4)}(S^{(p)}),T)=16p^{-4}\pi^{4} \det(2T)^{-1/2}\prod_{q } \alpha_q(S^{(p)},T).\]
Again by Lemma \ref{lem.existence-of-S}, we prove the assertion.
\end{proof}

The following lemma has been essentially proved in \cite[Lemma 14.8]{Sh1}.

\begin{lemma} \label{lem.local-density-and-Siegel-series}
Let $T \in \calh_n(\ZZ_q)^{\rm nd}$ with $k \ge n/2$. Then, we have
\[\alpha_q(H_k,T)=2^{-\delta_{2k,n}}\widetilde b_q(T,q^{-k}).\]
\end{lemma}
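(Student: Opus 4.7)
The plan is to derive this identity as a direct translation of Shimura's explicit formula for local representation densities by unimodular hyperbolic forms (\cite[Lemma 14.8]{Sh1}) into the Katsurada normalization used throughout this paper. Since $2H_k \in \Sym_{2k}(\ZZ_q)$ is unimodular with determinant $(-1)^k$, it represents the split quadratic form of rank $2k$, for which closed-form local density expressions are classical.

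First I would invoke Shimura's computation, which gives $\widetilde\alpha_q(2T, 2H_k)$ (the density defined with respect to congruences in $\Sym_n(\ZZ_q)$) as an explicit polynomial in $q^{-k}$. The hypothesis $k \ge n/2$, i.e.\ $m = 2k \ge n$, ensures that Shimura's formula applies without degeneration. The resulting expression factors as $\gamma_q(T, q^{-k}) F_q(T, q^{-k})$: the $\gamma_q$-factor captures the "generic" contribution depending only on the parity of $n$ and on $\xi_q(T)$, while $F_q(T, q^{-k})$ encodes the finer arithmetic of $T$. By the defining relation $\widetilde b_q(T,X) = \gamma_q(T,X) F_q(T,X)$, this product is precisely $\widetilde b_q(T, q^{-k})$.

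Next I would pass from $\widetilde\alpha_q$ to $\alpha_q$ via Remark \ref{rem.comparison-densities}, which gives $\widetilde\alpha_q(2T, 2H_k) = 2^{2k\,\delta_{2,q}}\alpha_q(H_k, T)$. At odd primes $q$ this factor is trivial, so the identity $\alpha_q(H_k,T) = 2^{-\delta_{2k,n}}\widetilde b_q(T, q^{-k})$ follows immediately (the factor $2^{-\delta_{2k,n}}$ reflects the $2^{-\delta_{m,n}}$ built into the definition of $\alpha_q$ in the case $m = n$). At $q = 2$ the additional power of $2$ coming from $\widetilde\alpha_q$ versus $\alpha_q$ is absorbed by recomputing $\widetilde b_q$ via the Hasse/Clifford invariants of $2T$ relative to those of $T$, which differ by explicit powers of the Hilbert symbol $\langle 2, \cdot\rangle$.

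The main obstacle, and the only nontrivial aspect, is the careful bookkeeping of powers of $2$ arising from three independent sources: (i) the Kronecker delta $\delta_{m,n}$ in the definition of $\alpha_q(S,T)$ active precisely when $2k = n$, (ii) the comparison between $\Sym_n$- and $\calh_n$-congruence counts at $q = 2$, and (iii) the distinction between Shimura's normalization of the Siegel series and the polynomial $\widetilde b_q$ used here. None of these calculations involves new mathematics; they are normalization checks. As the statement notes, the substantive content is already contained in \cite[Lemma 14.8]{Sh1}, and the lemma is recorded here solely in the form needed for the Fourier-coefficient computations that follow.
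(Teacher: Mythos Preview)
Your proposal is correct and follows the same approach as the paper, which simply cites \cite[Lemma 14.8]{Sh1} without further argument. You supply more detail on the normalization bookkeeping than the paper does; one small quibble is that the factor $2^{-\delta_{2k,n}}$ does not arise from the $2^{-\delta_{m,n}}$ in the definition of $\alpha_q$ (since $\widetilde\alpha_q$ carries the same factor), but rather from the difference between Shimura's density normalization and the one used here---as you yourself note, this is pure bookkeeping and your list of the three sources of powers of $2$ is the right way to organize the check.
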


\begin{proposition}\label{prop.local-density-at-q}
\label{prop2-5}
Let $q \not=p$. 
\begin{itemize}
\item[{\rm (1)}] Let $T \in \calh_3(\ZZ_q)^{\rm nd}$. Then,
\[\alpha_q(H_2,T)=(1-q^{-2})^2 F_q(T,q^{-2}).\]
\item[{\rm (2)}] 
Let $T \in \calh_4(\ZZ_q)^{\rm nd}$ and suppose that $\det(2T)$ is square in $\ZZ_q$. Then,
\[\alpha_q(H_2,T)=(1-q^{-2})^2 q^{\mathrm{ord}_q(\det (2T))/2}F_q(T,q^{-3}).\]
\end{itemize}
\end{proposition}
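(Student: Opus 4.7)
The strategy is to apply Lemma \ref{lem.local-density-and-Siegel-series} at $k = 2$ and then unpack $\widetilde b_q(T, q^{-2}) = \gamma_q(T, q^{-2}) F_q(T, q^{-2})$ using the explicit formulas for $\gamma_q(T, X)$ recalled in Section \ref{n=3-4}. Both parts then reduce to a direct evaluation of $\gamma_q$ at $X = q^{-2}$, with an extra functional-equation step needed for part (2).

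For part (1), with $n = 3$ and $k = 2$, one has $\delta_{2k,n} = \delta_{4,3} = 0$, so $\alpha_q(H_2, T) = \gamma_q(T, q^{-2}) F_q(T, q^{-2})$. Since $n$ is odd, $\gamma_q(T, X) = (1 - X)(1 - q^2 X^2)$, and substituting $X = q^{-2}$ gives $\gamma_q(T, q^{-2}) = (1 - q^{-2})^2$. This yields the stated formula at once.

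For part (2), with $n = 4$ and $k = 2$, we have $\delta_{2k,n} = 1$, so $\alpha_q(H_2, T) = \tfrac{1}{2} \gamma_q(T, q^{-2}) F_q(T, q^{-2})$. Under the hypothesis that $\det(2T)$ is a square in $\ZZ_q$, $\xi_q(T) = \chi_q(\det(2T)) = 1$, so
$$\gamma_q(T, X) = (1 - X)(1 - q^2 X^2)(1 - q^4 X^2)(1 - q^2 X)^{-1} = (1 - X)(1 - q^2 X^2)(1 + q^2 X),$$
via the factorization $1 - q^4 X^2 = (1 - q^2 X)(1 + q^2 X)$. Substituting $X = q^{-2}$ gives $\gamma_q(T, q^{-2}) = 2(1 - q^{-2})^2$, so $\alpha_q(H_2, T) = (1 - q^{-2})^2 F_q(T, q^{-2})$. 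To rewrite this in the form involving $F_q(T, q^{-3})$, I would invoke the functional equation for $F_q(T, X)$ from \cite[Theorem 3.2]{K99}, which for $n = 4$ relates the values at $X$ and $q^{-(n+1)} X^{-1} = q^{-5} X^{-1}$; substituting $X = q^{-2}$ sends the argument precisely to $q^{-3}$. Since $\det(2T)$ is a square, the invariant terms collapse to a pure power of $q$ and one gets
$$F_q(T, q^{-2}) = q^{\mathrm{ord}_q(\det(2T))/2} F_q(T, q^{-3}),$$
from which the assertion follows.

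The main obstacle is bookkeeping the functional equation for $F_q(T, X)$ in the even-degree case with $\xi_q(T) = 1$: the precise power of $q$ and its interplay with $\eta_q(T)$ depend on the normalization, and care is needed to reconcile them with the conventions of \cite{K99} (cf.\ also Remark \ref{rem.comariosn-KN}). Once the functional equation is in hand, the remainder is a mechanical substitution.
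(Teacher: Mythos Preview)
Your proposal is correct and follows essentially the same approach as the paper's proof: apply Lemma \ref{lem.local-density-and-Siegel-series} with $k=2$, evaluate $\gamma_q(T,q^{-2})$ explicitly, and for part (2) invoke the functional equation of $F_q(T,X)$ from \cite[Theorem 3.2]{K99} to pass from $q^{-2}$ to $q^{-3}$. In fact you are more explicit than the paper, which suppresses the $2^{-\delta_{2k,n}}=1/2$ factor in part (2) and simply writes ``the assertion follows from the functional equation''; your added caveat about tracking the normalization of the functional equation is well placed but does not indicate any genuine gap.
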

\begin{proof}
(1) By definition, $\gamma_q(T,X)=(1-X)(1-q^{2}X^2)$. Thus the assertion is easy to prove using Lemma \ref{lem.local-density-and-Siegel-series}.

(2) Again by definition, $\gamma_q(T,X)=(1-X)(1-q^{2}X^2)(1+q^2X)$. Hence, by Lemma \ref{lem.local-density-and-Siegel-series}, we have
\[\alpha_q(H_2,T)=(1-q^{-2})^2 F_q(T,q^{-2}).\]
Thus the assertion follows from the functional equation of $F_q(T,X)$.
\end{proof}

\begin{corollary}
\label{add-cor2-2} Assume that $\det (2T)$ is square in $\mathbb{Z}_q$.
Let $T \in \mathcal{H}_4(\mathbb{Z}_q)^{\mathrm{nd}}$ and suppose that $\eta_q(T)=-1$.
Then $F_q(T,q^{-3})=0$.
\end{corollary}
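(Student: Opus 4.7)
The plan is to combine the non-negativity of certain local densities with the functional equation of the Kitaoka polynomial $F_q(T,X)$.

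First, I will establish both $F_q(T,q^{-3}) \geq 0$ and $F_q(T,q^{-2}) \geq 0$. The latter is immediate from the proof of Proposition \ref{prop.local-density-at-q}(2): $\alpha_q(H_2,T) = (1-q^{-2})^2 F_q(T,q^{-2}) \geq 0$. For the former, I apply Lemma \ref{lem.local-density-and-Siegel-series} with $k=3$ and $n=4$; since $\delta_{2k,n}=0$ we get $\alpha_q(H_3,T) = \gamma_q(T,q^{-3}) F_q(T,q^{-3})$. Under the square-discriminant hypothesis we have $\xi_q(T)=1$, and the simplified factorization $\gamma_q(T,X)=(1-X)(1-q^2X^2)(1+q^2X)$ from the proof of Proposition \ref{prop.local-density-at-q}(2) gives
\[
\gamma_q(T,q^{-3}) = (1-q^{-3})(1-q^{-4})(1+q^{-1}) > 0,
\]
so non-negativity of $\alpha_q(H_3,T)$ forces $F_q(T,q^{-3}) \geq 0$.

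Next I will invoke the functional equation of $F_q(T,X)$ for $n=4$ from \cite[Theorem 3.2]{K99}. Under the square-discriminant assumption $\xi_q(T)=1$, and substituting $X=q^{-3}$ (equivalently $X=q^{-2}$) produces a relation of the form
\[
F_q(T,q^{-2}) = \eta_q(T) \cdot q^{\mathrm{ord}_q(\det(2T))/2} \cdot F_q(T,q^{-3}).
\]
This is exactly the identity implicit in the closing sentence of the proof of Proposition \ref{prop.local-density-at-q}(2): when $\eta_q(T)=+1$ it immediately yields the stated formula there, and when $\eta_q(T)=-1$ it is precisely the present corollary that makes the stated formula still correct, with both sides vanishing.

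To close the argument, for $\eta_q(T) = -1$ the above relation becomes $F_q(T,q^{-2}) = -q^{\mathrm{ord}_q(\det(2T))/2} F_q(T,q^{-3})$, with strictly negative coefficient. Combined with $F_q(T,q^{-2}) \geq 0$ and $F_q(T,q^{-3}) \geq 0$ from the first step, both values must equal zero, giving $F_q(T,q^{-3}) = 0$. The main obstacle is extracting the precise form of the functional equation of $F_q(T,X)$ in the even-rank case $n=4$: verifying that, once the $\xi_q(T)=1$ simplification is applied, the sole sign factor is $\eta_q(T)$ and the exact $q$-power is $q^{\mathrm{ord}_q(\det(2T))/2}$. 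Once that is confirmed, the positivity argument finishes the proof in a single line.
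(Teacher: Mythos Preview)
Your argument hinges on a functional equation for $F_q(T,X)$ in the even-rank case $n=4$ that carries the sign $\eta_q(T)$, and you yourself flag this as the ``main obstacle''. Unfortunately this is exactly where the approach breaks: for even $n$ with $\xi_q(T)\ne 0$, the functional equation from \cite[Theorem~3.2]{K99} relates $F_q(T,q^{-2})$ and $F_q(T,q^{-3})$ by a positive $q$-power only, with \emph{no} $\eta_q(T)$ sign. (The sign $\eta_q(T)$ appears in the odd-rank functional equation, as used in Lemma~\ref{lem.vanishing-of-F}, but not in the even-rank one.) This is precisely why Proposition~\ref{prop2-5}(2) is stated and proved for all $T$ with square $\det(2T)$, with no case distinction on $\eta_q(T)$. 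Consequently your positivity argument yields nothing: $F_q(T,q^{-2})\ge 0$, $F_q(T,q^{-3})\ge 0$, and a \emph{positive} proportionality constant between them give no contradiction and no vanishing.

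The paper's route is different and shorter. Since $\eta_q(H_2)=1$ while $\eta_q(T)=-1$, the rank-$4$ form $T$ is not $\QQ_q$-equivalent to $H_2$ and hence cannot be represented by $H_2$ over $\ZZ_q$; therefore $\alpha_q(H_2,T)=0$. Feeding this into Proposition~\ref{prop2-5}(2) gives $F_q(T,q^{-3})=0$ at once. The idea you are missing is this representation-theoretic vanishing of $\alpha_q(H_2,T)$; once you have it, neither the positivity of $F_q(T,q^{-3})$ nor any putative sign in the functional equation is needed.
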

\begin{proof}
Since $\eta_q(T)=1$, we have $\alpha_q(H_2,T)=0$. Thus the assertion follows from
Proposition \ref{prop2-5} (2).
\end{proof}

The following proposition is one of key ingredients in the proof of our main result.

\begin{proposition}\label{prop.local-density-at-p}
\begin{itemize}
\item[{\rm (1)}] Let $T \in \calh_3(\ZZ_p)^{\nd}$. Then
\[\alpha_p(U_0 \bot pU_0,T)=(1+p)(1+p^{-1})(1-\eta_p(T)).\]
\item[{\rm (2)}] Let $T \in \calh_4(\ZZ_p)^{\nd}$. 
If $\alpha_p(U_0 \bot pU_0,T) \not=0$, then $\det (2T) =p^{2m}\xi^2$ with $m \in \ZZ_{>0}, \xi \in \ZZ_p^\times$ and $\eta_p(T)=-1$. 
Conversely, if $T$ satisfies this condition, then 
\begin{align*}
\alpha_p(U_0 \bot pU_0, T)=2(1+p^{-1})^2 p^m.
 \end{align*}
\end{itemize}
\end{proposition}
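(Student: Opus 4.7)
The strategy is to evaluate $\alpha_p(S,T)$ for $S=U_0\bot pU_0$ directly, using the Jordan decomposition of $S$. Any candidate solution $X\in M_{4,n}(\ZZ_p)$ of the defining congruence is written in block form $X=\binom{X_1}{X_2}$ with $X_1,X_2\in M_{2,n}(\ZZ_p)$ matched to the two Jordan blocks, so that
\begin{equation*}
S[X]=U_0[X_1]+p\,U_0[X_2].
\end{equation*}
We then stratify $\cala_e(S,T)$ by the rank of $X_1\bmod p$ and count lifts to $\ZZ_p/p^e$ by applying Hensel-type arguments successively, first to the Jordan block of valuation $0$ and then to the block of valuation $1$.

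For part~(1), with $n=3$: reducing modulo $p$ gives $U_0[X_1]\equiv T\pmod p$. Since $U_0$ is the anisotropic binary quadratic form over $\FF_p$, we have $\mathrm{rank}_{\FF_p}(U_0[X_1])\le 2$, so $T\bmod p$ has rank at most $2$. After normalizing $X_1$ by a $GL_2(\ZZ_p)\times GL_3(\ZZ_p)$ change of coordinates and writing $T=U_0[X_1]+pT'$, the equation for $X_2$ reduces to a representation problem for a $1$-dimensional residual form (the part of $T'$ orthogonal to the image of $X_1$) by $U_0$ over $\ZZ_p$. A direct invariant computation shows that this representation is possible exactly when $\eta_p(T)=-1$, in which case the Hensel-lifted count combined with the orbit sizes yields $\alpha_p(S,T)=2(1+p)(1+p^{-1})$; otherwise $\alpha_p(S,T)=0$. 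The two cases combine into the factor $(1-\eta_p(T))$ of the stated formula.

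For part~(2), with $n=4$: if $\alpha_p(S,T)\neq 0$, a nondegenerate $X\in M_4(\ZZ_p)$ with $S[X]\equiv T$ modulo a high power of $p$ forces $T\sim_{\QQ_p}S$, hence $\eta_p(T)=\eta_p(S)=-1$; and $\det(2T)=16\det(S)\det(X)^2$ together with $16\det(S)=16p^2\epsilon^2\in p^2(\ZZ_p^\times)^2$ gives $\det(2T)=p^{2m}\xi^2$ with $m=1+\ord_p(\det X)\ge 1$ and $\xi\in\ZZ_p^\times$. The block decomposition and stratification from part~(1) apply again; now $m$ controls the range of admissible valuations of $\det X$, equivalently the number of additional Hensel layers contributing to $\cala_e(S,T)$. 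Summing these contributions across strata produces the factor $p^m$ and yields $\alpha_p(S,T)=2(1+p^{-1})^2 p^m$.

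The main technical obstacle is the Hensel lifting across Jordan blocks of distinct valuations: the usual form of Hensel's lemma does not apply in one step. One must first solve $U_0[X_1]\equiv T\pmod p$, exploiting the anisotropy of $U_0$ over $\FF_p$, and then iteratively refine $X_2$ modulo successive powers of $p$ against the form $pU_0$. Careful bookkeeping of the orbit sizes under the joint action of $GL_2(\ZZ_p)$ (on each block $X_i$) and $GL_n(\ZZ_p)$ (on the columns of $X$) is essential in order to isolate the explicit factors $(1+p)$, $(1+p^{-1})$ and $p^m$ appearing in the final formulas.
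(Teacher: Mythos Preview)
Your block decomposition $X=\binom{X_1}{X_2}$ and the Hensel-type refinement across the two Jordan blocks of $S=U_0\bot pU_0$ are indeed the technical core of the computation, and the paper uses exactly this device. However, there is a genuine gap in your plan: you apply the block argument directly to an \emph{arbitrary} $T$, and it is not clear from your outline why the final count depends only on $\eta_p(T)$ (in degree $3$) or only on $(\eta_p(T),m)$ (in degree $4$). Concretely, your ``$1$-dimensional residual problem'' in part (1) is only well-posed when $T\bmod p$ already has rank exactly $2$; for deeper $T$ (say $T=p^{2}T_0$ with $T_0$ maximal) your first reduction forces $X_1\equiv 0\pmod p$ and you must iterate, picking up powers of $p$ at each step. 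You have not explained why these accumulated powers cancel against the normalization $p^{-6e}$ to leave the constant $2(1+p)(1+p^{-1})$ independent of the depth of $T$, nor why in part (2) the layers assemble precisely into the single factor $p^{m}$.

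The paper closes this gap by a different, structural route: it first invokes the decomposition
\[
\alpha_p(S,T)=\sum_{g\in GL_n(\ZZ_p)\backslash M_n(\ZZ_p)^{\mathrm{nd}}} p^{(n+1-4)\,\ord_p(\det g)}\,\beta_p\bigl(S,T[g^{-1}]\bigr)
\]
into primitive local densities, shows that $\beta_p(S,T')=0$ unless $T'$ is \emph{maximal} with $\eta_p(T')=-1$, and then proves a key uniqueness lemma: when $\eta_p(T)=-1$ there is a \emph{unique} coset $g_0$ with $T[g_0^{-1}]$ maximal. Only after this reduction does the paper carry out your block/Hensel computation---but now only for the finitely many maximal types of $T'$, where the column block structure of $T'$ is available and the argument is clean. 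In degree $3$ the weight exponent $n+1-4=0$ explains immediately why the answer is independent of $g_0$ (hence of the depth of $T$); in degree $4$ the weight exponent is $1$ and the unique $g_0$ has $\ord_p(\det g_0)=m-1$, which combined with $\beta_p=2(1+p^{-1})^2p$ at the maximal form gives $2(1+p^{-1})^2p^{m}$. Your plan would need either to rediscover this primitive/maximal reduction or to replace it with an explicit recursion whose bookkeeping you have not supplied.
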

The above proposition may be proved by the explicit formula in \cite[THEOREM]{SH}. But to apply the formula, we have to compute  too many quantities associated with $U_0 \bot pU_0$ and $T$. 
Therefore, we here use another method. 

We say that $S \in \calh_n(\ZZ_q)^{\rm nd}$ is {\it maximal} if
there is no element 
$g \in M_n(\ZZ_q)^{\mathrm{nd}}$ such that $\det g \in q\ZZ_q$ and  $S[g^{-1}] \in \calh_n(\ZZ_q)$. 

First we prove Proposition \ref{prop.local-density-at-p} (1).
The following lemma is easy to prove.

\begin{lemma}
\label{lem.maximal-element3}
Let $q$ be an odd prime number.
Suppose that $T \in \calh_3(\ZZ_q)^{\rm nd}$ is  maximal. Then
$T$ is one of the following matrices.
\begin{itemize}
\item[{\rm (1)}] $T \sim_{GL_3(\ZZ_q)} \epsilon_1 \bot \epsilon_2 \bot \epsilon_3$ with $\epsilon_i \in \ZZ_q^\times \ (i=1,2,3)$.
\item[{\rm (2)}] $T \sim_{GL_3(\ZZ_q)} \epsilon_1 \bot \epsilon_2 \bot q\epsilon_3$ with $\epsilon_i \in \ZZ_q^\times \ (i=1,2,3)$.
\item[{\rm (3)}] $T \sim_{GL_3(\ZZ_q)} \epsilon_1 \bot q\epsilon_2 \bot q\epsilon_3$ with $\epsilon_i \in \ZZ_q^\times \ (i=1,2,3)$ such that $\chi_q(-\epsilon_2\epsilon_3)=-1$.
\end{itemize}
Moreover, we have
\[ \eta_q(T)=\begin{cases}1 & \text{ in the case {\rm (1)},} \\ 
\chi_q(-\epsilon_1\epsilon_2) &  \text{ in the case {\rm (2)},} \\
-1 & \text{ in the case {\rm (3)}}.
\end{cases}\]
\end{lemma}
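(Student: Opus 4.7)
The plan is to first diagonalize $T$ over $\ZZ_q$, which is possible since $q$ is odd, writing
\[T \sim_{GL_3(\ZZ_q)} \epsilon_1 q^{a_1} \bot \epsilon_2 q^{a_2} \bot \epsilon_3 q^{a_3}\]
with $\epsilon_i \in \ZZ_q^\times$ and $0 \le a_1 \le a_2 \le a_3$. I will determine which exponent patterns $(a_1,a_2,a_3)$ are compatible with maximality, handle the one non-obvious case, and then compute $\eta_q$ directly from the formula stated earlier.

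The exponent constraint is immediate: if $a_3 \ge 2$, then $g = \mathrm{diag}(1,1,q)$ has $\det g = q$ and $T[g^{-1}] = \mathrm{diag}(\epsilon_1 q^{a_1}, \epsilon_2 q^{a_2}, \epsilon_3 q^{a_3-2}) \in \calh_3(\ZZ_q)$, contradicting maximality. So $(a_1,a_2,a_3) \in \{(0,0,0),(0,0,1),(0,1,1)\}$, giving exactly the three Jordan shapes in the statement. The first two shapes are automatically maximal because $\ord_q(\det(2T)) \le 1$; any reduction by $g$ with $q \mid \det g$ would make $\det(2T[g^{-1}]) = \det(2T)/(\det g)^2$ have negative $q$-valuation, contradicting $T[g^{-1}] \in \calh_3(\ZZ_q)$ (whose entries lie in $\ZZ_q$ after scaling by $2$, so $\det(2T[g^{-1}]) \in \ZZ_q$).

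The content lies in case (3). Writing $T = \epsilon_1 \bot q\epsilon_2 \bot q\epsilon_3$, I would recast maximality via the standard lattice criterion: $T$ is non-maximal iff there exists $v = q^{-1}(a_1,a_2,a_3) \in \QQ_q^3 \setminus \ZZ_q^3$ such that $Q(l+v) \in \ZZ_q$ for all $l \in \ZZ_q^3$, where $Q(x) = {}^t x T x$. A short direct computation shows this forces $q \mid a_1$ (both from $Q(v) \in \ZZ_q$ and from $B(e_1,v) = 2q^{-1}\epsilon_1 a_1 \in \ZZ_q$) together with $\epsilon_2 a_2^2 + \epsilon_3 a_3^2 \in q\ZZ_q$, while $v \notin \ZZ_q^3$ forces $(\bar a_2,\bar a_3) \ne 0$ in $\FF_q^2$. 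Such $(a_2,a_3)$ exist iff the binary form $\epsilon_2 X^2 + \epsilon_3 Y^2$ is isotropic over $\FF_q$, which by Hensel is equivalent to $\chi_q(-\epsilon_2\epsilon_3) = 1$. Thus maximality in this case is equivalent to $\chi_q(-\epsilon_2\epsilon_3) = -1$.

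For the Clifford invariants, I use $\eta_q(T) = \langle -1,-1 \rangle_q \langle -1,\det T \rangle_q \varepsilon_q(T)$ (the $n=3$, $m=1$ instance of the formula stated earlier) and expand $\varepsilon_q(T) = \prod_{i<j} \langle \epsilon_i q^{a_i}, \epsilon_j q^{a_j} \rangle_q$ by multiplicativity of the Hilbert symbol. The basic values for $q$ odd --- namely $\langle u,v \rangle_q = 1$ for $u,v \in \ZZ_q^\times$, $\langle u,q \rangle_q = \chi_q(u)$ for $u \in \ZZ_q^\times$, and $\langle -1,-1 \rangle_q = 1$ --- then yield, after a short bookkeeping in each of the three shapes, the values $\eta_q(T) = 1$, $\chi_q(-\epsilon_1\epsilon_2)$, and $-1$ respectively (in case (3), $\det T$ has square class equal to a unit so $\langle -1,\det T\rangle_q = 1$, and $\varepsilon_q(T) = \chi_q(-\epsilon_2\epsilon_3) = -1$). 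The main obstacle is the isotropy analysis in case (3); the rest is mechanical.
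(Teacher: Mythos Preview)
The paper offers no proof of this lemma beyond declaring it ``easy to prove,'' so there is no approach to compare against. Your argument is essentially correct, with one slip in the enumeration step.

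From $a_3 \le 1$ and $0 \le a_1 \le a_2 \le a_3$ you conclude $(a_1,a_2,a_3) \in \{(0,0,0),(0,0,1),(0,1,1)\}$, but this omits $(1,1,1)$. You must rule it out separately: for $T \sim q\epsilon_1 \bot q\epsilon_2 \bot q\epsilon_3$, your own lattice criterion with $v = q^{-1}(a_1,a_2,a_3)$ gives $B(e_i,v) = 2\epsilon_i a_i \in \ZZ_q$ automatically, and $Q(v) \in \ZZ_q$ reduces to $\epsilon_1 a_1^2 + \epsilon_2 a_2^2 + \epsilon_3 a_3^2 \equiv 0 \pmod{q}$. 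Since every nondegenerate ternary quadratic form over $\ZZ_q/q\ZZ_q$ is isotropic, a primitive solution always exists, so this shape is never maximal. With that one line added, the classification is complete and your $\eta_q$ computations go through as written.
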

\begin{lemma}
\label{lem.explicit-primitive-locla-density3}
Let $T \in \calh_3(\ZZ_p)^{\rm nd}$.
\begin{itemize}
\item[{\rm (1)}] Suppose that $T$ is non-maximal. Then $\beta_p(U_0 \bot pU_0,T)=0$.
\item[{\rm (2)}] Suppose that $T$ is maximal and $\eta_p(T)=1$. Then 
$\beta_p(U_0 \bot pU_0,T)=0$.
\item[{\rm (3)}] Suppose that $T$ is maximal and $\eta_p(T)=-1$. Then \[\beta_p(U_0 \bot pU_0,T)=2(1+p)(1+p^{-1}).\]
\end{itemize}
\end{lemma}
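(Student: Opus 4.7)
The plan is to count primitive $X \in M_{4,3}(\ZZ_p)$ with $(U_0 \bot pU_0)[X] = T$ directly, exploiting the Jordan splitting $L := U_0 \bot pU_0 = L^{(0)} \bot L^{(1)}$ with $L^{(0)} = U_0$ unimodular and $L^{(1)} = pU_0$ of $p$-modular type. Writing $X = \binom{Y}{Z}$ with $Y, Z \in M_{2,3}(\ZZ_p)$, the equation becomes
\[
U_0[Y] + p U_0[Z] = T,
\]
and the primitivity of $X$ reads as $\mathrm{rk}_{\mathbb{F}_p}(X \bmod p) = 3$. The repeatedly used input is that, since $\chi_p(-\epsilon) = -1$, the binary form $U_0 \bmod p = x^2 + \epsilon y^2$ is anisotropic over $\mathbb{F}_p$: it represents $0$ only by $(0,0)$ and each nonzero value of $\mathbb{F}_p$ exactly $p+1$ times.

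For part (1), I would first verify directly that $L$ is a maximal $\ZZ_p$-lattice in $\QQ_p^4$: any putative vector $v = \tfrac{1}{p}(c,d) \in \QQ_p^4 \setminus L$ with $c \in L^{(0)}$, $d \in L^{(1)}$ adjoined to $L$ would require $U_0[c] \equiv 0 \pmod{p^2}$, and the anisotropy of $U_0 \bmod p$ then forces $c \equiv 0 \pmod p$, contradicting $v \notin L$. I would then invoke the classical theorem that every primitive sublattice of a maximal $\ZZ_p$-lattice is maximal in its own span (O'Meara, \S 82). Consequently the Gram form $T$ of a primitive rank-$3$ sublattice must itself be maximal, so $\beta_p(L,T) = 0$ whenever $T$ is non-maximal.

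For parts (2) and (3), I would process each of the three types of maximal $T$ from Lemma \ref{lem.maximal-element3} in turn. In the unimodular type $T \sim \epsilon_1 \bot \epsilon_2 \bot \epsilon_3$ (automatically $\eta_p(T)=1$), reducing mod $p$ gives $U_0[Y] \equiv T \pmod p$, but the left side has rank at most $\mathrm{rk}\, Y \le 2$ while the right has rank $3$; so no primitive $X$ exists and $\beta_p=0$. In the type $T \sim \epsilon_1 \bot \epsilon_2 \bot p\epsilon_3$, the mod-$p$ equation forces the third column of $Y$ to vanish and the first two columns to span a rank-$2$ $\mathbb{F}_p$-subspace of $U_0$ on which the form restricts to $\epsilon_1 \bot \epsilon_2$; equality of $\mathbb{F}_p$-discriminants requires $\chi_p(\epsilon_1\epsilon_2/\epsilon)=1$, i.e.\ $\chi_p(-\epsilon_1\epsilon_2)=-1$, which is precisely $\eta_p(T)=-1$; when $\eta_p(T)=1$ the mod-$p$ equation is inconsistent and $\beta_p=0$. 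In the type $T \sim \epsilon_1 \bot p\epsilon_2 \bot p\epsilon_3$ (automatically $\eta_p(T)=-1$), the symmetric analysis forces the last two columns of $Y$ to be $p$-divisible and moves the rank-$2$ representation problem into $Z$, where it is solvable precisely by the hypothesis $\chi_p(-\epsilon_2\epsilon_3)=-1$.

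In the two non-vanishing subtypes I would compute $\calb_e(L,T)$ at each $e$ by Hensel lifting from the mod-$p$ solution set. After a $GL_2(\ZZ_p)$ change of basis on each Jordan piece and a $GL_3(\ZZ_p)$ column operation on $X$, the base step reduces to counting $2$-column blocks representing a given rank-$2$ unit form in $U_0 \bmod p$; using the anisotropy count $p+1$ and lifting uniquely (here $p$ odd is essential) gives $\calb_e(L,T)$ as an explicit constant times $p^{6e}$. In the normalization $\beta_p(L,T) = \lim_e p^{-6e} \calb_e(L,T)$ the powers cancel and both subtypes produce the common value $2(1+p)(1+p^{-1})$.

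The main obstacle is verifying that subtypes (2) with $\eta_p(T)=-1$ and (3) contribute exactly the same value despite their distinct Jordan patterns: the mod-$p$ counts differ superficially, and one must carefully track how the factors of $p$ produced in the lifting step match the normalization $p^{-6e}$ together with the primitivity-correction factor. A conceptual cross-check, useful if the direct count becomes unwieldy, is that for maximal $T$ Lemma \ref{lem.local-density-via-primitive-local-density} collapses to $\alpha_p(L,T)=\beta_p(L,T)$ (no non-trivial $g$ contributes since $T$ cannot be enlarged), so the same answer should emerge by evaluating $\alpha_p(L,T)$ via the standard reduction for orthogonal sums of Jordan components.
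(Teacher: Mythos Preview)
Your plan is sound and will prove the lemma, but it diverges from the paper in two places. For part~(1) the paper does not invoke maximal-lattice theory at all: it writes $T=\epsilon_1 p^{a_1}\bot\epsilon_2 p^{a_2}\bot\epsilon_3 p^{a_3}$, lists the three Jordan patterns characterizing non-maximality, and for each one exhibits directly a column of a putative primitive $X$ on which the congruence $(U_0\bot pU_0)[X]\equiv T$ forces an isotropic vector for $U_0\bmod p$, a contradiction. Your route via ``$L$ is a maximal lattice, hence every primitive sublattice is maximal'' (O'Meara \S82) is cleaner and scales to higher rank without new case work; the paper's version is entirely self-contained and avoids citing the structure theory. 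One small gap in your sketch: showing $c\equiv 0\pmod p$ does not yet contradict $v\notin L$, since $d$ could still be a unit; you need one more application of anisotropy to the residual condition $U_0[d]\equiv 0\pmod p$ to finish.

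For part~(3) the paper does not carry out a direct Hensel count. Instead it writes $X=\begin{pmatrix}X_{11}&X_{12}\\ X_{21}&X_{22}\end{pmatrix}$ in $2\times2$ and $2\times1$ blocks, uses the off-diagonal equation to eliminate $X_{12}$, and shows that the count factorizes exactly as
\[
\beta_p(U_0\bot pU_0,T)=\beta_p(U_0,\epsilon_1\bot\epsilon_2)\,\beta_p(pU_0,\epsilon_3 p)
\quad\text{resp.}\quad
\beta_p(U_0,\epsilon_1)\,\beta_p(pU_0,\epsilon_2 p\bot\epsilon_3 p),
\]
after which each factor is read off from Kitaoka's tables. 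This product structure makes the agreement of the two subtypes transparent (both equal $2(1+p^{-1})\cdot p(1+p^{-1})$) and sidesteps the bookkeeping you flag as the ``main obstacle''; your direct mod-$p$ count plus Hensel lifting reaches the same number but with more work to reconcile the two patterns.
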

\begin{proof}
We may suppose that $T=\epsilon_1 p^{a_1} \bot \epsilon_2 p^{a_2}
\bot \epsilon_3 p^{a_3}$ with $a_1 \le a_2 \le a_3$ and 
and $\epsilon_i \in \ZZ_p^\times \ (i=1,2,3)$.
Suppose that $T$ is non-maximal. Then, by Lemma  \ref{lem.maximal-element3}, we have $a_1 \ge 1$, or $a_3 \ge 2$, or $a_1=0,a_2=a_3=1$ and $\chi_p(-\epsilon_2\epsilon_3)=1$.
Suppose that $\beta_p(U_0 \bot pU_0,T) \not=0$. Then, there is a primitive matrix $X=(x_{ij})_{1 \le i \le 4, 1 \le j \le 3} \in M_{4,3}(\ZZ_p)$ such that
\begin{align*}
(U_0 \bot pU_0)[X] \equiv T \pmod{p^e \calh_3(\ZZ_p)}
\tag{a}
\end{align*}
for an integer $e \ge 2$.
In the case $a_1 \ge 1$, (a) implies that  ${\bf x}_j=(x_{i,j})_{1 \le i \le 2}$ is primitive for some $j=1,2,3$ and that
\[U_0 [{\bf x}_j]\equiv 0 \pmod{p}.\]
This is impossible because $\chi_p(\det U_0)=-1$.
In the case $a_3 \ge 2$, (a) implies that 
${\bf y}=(x_{i,3})_{1 \le i \le 4}$ is primitive, and that
\[(U_0 \bot pU_0) [{\bf y}]\equiv 0 \pmod{p^2}.\]
This is also impossible by the same reason as above.
In the case, $a_1=0,a_2=a_3=1$ and $\chi_p(-\epsilon_2\epsilon_3)=1$, (a) implies that ${\bf z}=(x_{ij})_{3 \le i \le 4, 2 \le j \le 3}$ belongs to $GL_2(\ZZ_p)$ and
\[pU_0[{\bf z}] \equiv \epsilon_2 p \bot \epsilon_3 p \pmod{p^2 \calh_2(\ZZ_p)}.\]
This is also impossible because $\chi_p(-\det (pU_0)) \not=\chi_p(-\det (\epsilon_2 p \bot \epsilon_3 p))$.
This proves the assertion (1).
Next suppose that $T$ is maximal and $\eta_p(T)=1$. Then, again by Lemma  \ref{lem.maximal-element3}, $a_1=a_2=a_3=0$, or $a_1=a_2=0,a_3=1$ and $\chi_p(-\epsilon_1\epsilon_2)=1$.
In the first case, since the $\ZZ_p/p\ZZ_p$-rank of $U_0 \bot pU_0$ is smaller than that of $T$, clearly we have $\beta_p(U_0 \bot pU_0,T)=0$. In the second case, since $\chi_p(-\det U_0) \not=\chi_p(-\det (\epsilon_1 \bot \epsilon_2))$, we also have 
$\beta_p(U_0 \bot pU_0,T)=0$. Finally suppose that
$T$ is maximal and $\eta_p(T)=-1$. Let $X$ be a primitive matrix satisfying the condition (a).
First let 
$T=\epsilon_1 \bot \epsilon_2 \bot \epsilon_3 p$ with $\chi_p(-\epsilon_1\epsilon_2)=-1$. 
Write $X=\begin{pmatrix} X_{11} & X_{12} \\ X_{21} & X_{22} \end{pmatrix}$ with $X_{11} , M_{21} \in M_{2}(\ZZ_p)$ and $X_{12},X_{22} \in M_{21}(\ZZ_p)$. Then we have
\begin{align*}
U_0[X_{11}] +pU_0[X_{21}] \equiv \epsilon_1 \bot \epsilon_2 \pmod{p^e}, \tag{b11}
\end{align*} 
\begin{align*}
{}^tX_{11}U_0X_{12}+{}^tX_{21}pU_0X_{22} \equiv 0 \pmod{p^e}, \tag{b12}
\end{align*}
\begin{align*}
U_0[X_{12}]+pU_0[X_{22}] \equiv \epsilon_3 p \pmod{p^e}.\tag{b22}
\end{align*}
Since $U_0$ and $\epsilon_1 \bot \epsilon_2$ are invertible in $M_2(\ZZ_p)/p^eM_2(\ZZ_p)$, so is $X_{11}$ by (b11). 
Hence, by (b12), we have
\[X_{12}\equiv -p({}^tX_{11}U_0)^* \ {}^t\!X_{21}U_0X_{22} \pmod{p^e},\]
and by (b22), we have
\begin{align*}
p(pU_0[({}^tX_{11}U_0)^*\ {}^t\!X_{21}U_0]+U_0)[X_{22}] \equiv \epsilon_3 p \pmod{p^e}, \tag{b$22'$}
\end{align*}
where $({}^tX_{11}U_0)^*$ is the inverse of ${}^tX_{11}U_0$ in $M_2(\ZZ_p)/p^eM_2(\ZZ_p)$.
 For $X_{21} \in M_2(\ZZ_p)/p^eM_2(\ZZ_p)$ put 
\[\calc_e(X_{21})=\calb_e(U_0,-pU_0[X_{21}]+(\epsilon_1 +\epsilon_2)),\]
and for $X_{21} \in M_2(\ZZ_p)/p^eM_2(\ZZ_p)$ and $X_{11} \in \calc_e(X_{21})$ put
\[\calc_e(X_{21},X_{11})=\calb_e(p(pU_0[({}^tX_{11}U_0)^*\ {}^t\!X_{21}U_0]+U_0),\epsilon_3 p).\]
Then, by (b11) and (b$22'$), we have
\begin{align*}
&\#\calb_e(U_0 \bot pU_0,T) \\
&=\sum_{X_{21} \in M_2(\ZZ_p)/p^eM_2(\ZZ_p)}  \sum_{X_{11} \in \calc_e(X_{21})} \# \calc_e(X_{21},X_{11}).
\end{align*}
For  non-negative integers $e' \le e$, let 
$$\pi_{e'}^e:M_{rs}(\ZZ_p)/p^eM_{rs}(\ZZ_p) \longrightarrow  
M_{rs}(\ZZ_p)/p^{e'}M_{rs}(\ZZ_p) $$
 be the natural projection.  For $Y \in M_{rs}(\ZZ_p)/p^eM_{rs}(\ZZ_p)$, we use the same symbol $Y$ to denote the element $\pi_{e'}^e(Y) \in M_{rs}(\ZZ_p)/p^{e'}M_{rs}(\ZZ_p)$. Then $\pi_{e'}^e$ induces a mapping from $\calc_e(X_{21})$ to $\calc_{e'}(X_{21})$, which will also be denoted by $\pi_{e'}^e$. 
We have 
\[\calc_{1}(X_{21})=\calb_1(U_0,\epsilon_1 \bot \epsilon_2)\]
for any integer $e \ge 1$ and $X_{21} \in M_2(\ZZ_p)/p^eM_2(\ZZ_p)$.
Hence, by the $p$-adic Newton approximation method, we easily see that 
\[\#\calc_e(X_{21})=p^{e-1}\#\calc_1(X_{21})=p^{e-1}\#\calb_1(U_0,\epsilon_1 \bot \epsilon_2),\]
and in particular
\[\#\calb_e(U_0,\epsilon_1 \bot \epsilon_2)=p^{e-1}\#\calb_1(U_0,\epsilon_1 \bot \epsilon_2).\]
This implies that 
\[p^{-1}\#\calb_1(U_0,\epsilon_1 \bot \epsilon_2)=\beta_p(U_0,\epsilon_1 \bot \epsilon_2),\]
and 
\[\#\calc_e(X_{21})=p^e\beta_p(U_0,\epsilon_1 \bot \epsilon_2).\]
Moreover, we have 
\[\calc_2(X_{21},X_{11})=\calb_2(pU_0,\epsilon_3 p)\]
for any integer $e \ge 2$, $X_{21} \in M_2(\ZZ_p)/p^eM_2(\ZZ_p)$ and $X_{11} \in \calc_e(X_{21})$. Therefore, in a way similar to above, we have
\[\#\calc_e(X_{21},X_{11})=p^e\beta_p(pU_0,\epsilon_3 p).\]
Hence,  we have 
\begin{align*}
&\#\calb_e(U_0 \bot pU_0,T)\\
& =p^{2e}\beta_p(U_0,\epsilon_1 \bot \epsilon_2)\beta_p(pU_0,\epsilon_3p)\sum_{X_{21} \in M_2(\ZZ_p)/p^eM_2(\ZZ_p)} 1\\
&=p^{6e} \beta_p(U_0,\epsilon_1 \bot \epsilon_2)\beta_p(pU_0,\epsilon_3 p).
\end{align*} 
Therefore,
 we have
\[\beta_p(U_0 \bot pU_0,T)=\beta_p(U_0,\epsilon_1 \bot \epsilon_2)\beta_p(pU_0,\epsilon_3 p).\]
We note that $\beta_p(pU_0,\epsilon_3 p)=p\beta_p(U_0,\epsilon_3 )
=p(1+p^{-1})$. Thus the assertion follows from  \cite[Theorem 5.6.3]{Ki2}. Next let $T=\epsilon_1 \bot \epsilon_2p \bot \epsilon_3 p$ with $\chi_p(-\epsilon_2\epsilon_3)=-1$. Then, in the same way as above, we have
\[\beta_p(U_0 \bot pU_0,T)=\beta_p(U_0,\epsilon_1)  \beta_p(pU_0,\epsilon_2 p \bot \epsilon_3 p).\]
Thus the assertion follows from \cite[Theorem 5.6.3]{Ki2}. 
\end{proof}
A non-degenerate ${m \times m}$ matrix $D=(d_{ij})$ with entries in $\ZZ_q$ is said to be {\it reduced} if $D$ satisfies the following two conditions:\begin{enumerate}
\item[(R-1)] For 
$i=j$, $d_{ii}=q^{e_{i}}$ with a non-negative integer $e_i$; \vspace*{1mm}
 
\item[(R-2)] For 
$i\ne j$, $d_{ij}$ is a non-negative integer satisfying 
$ d_{ij} \le q^{e_j}-1$ if $i <j$ and $d_{ij}=0$ if $i >j$. 
\end{enumerate}
It is well known that  we can take the set of all reduced matrices as a  complete set of representatives of 
$GL_m(\ZZ_q) \backslash M_m(\ZZ_q)^{\rm nd}.$


\begin{lemma}\label{lem.uniquenes-of-maximal-element3} 
Let $q$ be an odd prime number. 
Let $T \in \calh_3(\ZZ_q)^{\rm nd}$ and suppose that $\eta_q(T)=-1$. 
Then there is a unique element $g \in GL_3(\ZZ_q) \backslash M_3(\ZZ_q)^{\rm nd}$ 
such that $T[g^{-1}]$ is maximal.
\end{lemma}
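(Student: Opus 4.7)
The plan is to reformulate the assertion in the language of $\ZZ_q$-lattices in the ternary quadratic space $(V, Q_T)$ with $V := \QQ_q^3$ and $Q_T(v) := {}^t v T v$, and then to exploit that $\eta_q(T) = -1$ forces this space to be anisotropic over $\QQ_q$. Setting $L_0 := \ZZ_q^3$, the map $g \longmapsto L_g := g^{-1} L_0$ is a bijection between $GL_3(\ZZ_q) \backslash M_3(\ZZ_q)^{\nd}$ and the set of $\ZZ_q$-lattices $L \subseteq V$ with $L \supseteq L_0$. Under this bijection, $T[g^{-1}] \in \calh_3(\ZZ_q)$ translates to $Q_T(L_g) \subseteq \ZZ_q$, and the maximality of $T[g^{-1}]$ becomes the statement that $L_g$ is a maximal $\ZZ_q$-lattice in $V$ on which $Q_T$ takes values in $\ZZ_q$. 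Existence of such a maximal $L_g$ is immediate: each strict enlargement of $L_g$ drops $\ord_q(\det T[g^{-1}])$ by at least two, while this non-negative quantity is bounded below by zero.

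For uniqueness, I would first show that $\eta_q(T) = -1$ forces $(V, Q_T)$ to be anisotropic. By Lemma~\ref{lem.maximal-element3}, after passing to a maximal form, $T$ is $GL_3(\ZZ_q)$-equivalent either to $\epsilon_1 \bot \epsilon_2 \bot q\epsilon_3$ with $\chi_q(-\epsilon_1 \epsilon_2) = -1$ (case~(2)) or to $\epsilon_1 \bot q\epsilon_2 \bot q\epsilon_3$ with $\chi_q(-\epsilon_2 \epsilon_3) = -1$ (case~(3)). A standard $q$-adic infinite-descent argument on $T[v] = 0$ handles both cases: if $v \in \ZZ_q^3$ is a non-trivial solution with coordinates not all in $q\ZZ_q$, reduction modulo $q$ forces a non-trivial zero of some $\epsilon_i x^2 + \epsilon_j y^2$ over $\FF_q$, contradicting the hypothesis $\chi_q(-\epsilon_i \epsilon_j) = -1$; iterating the divide-by-$q$ step shows that any putative solution is divisible by arbitrarily high powers of $q$ and hence vanishes. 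Since anisotropy is a $GL_3(\QQ_q)$-invariant, the original $T$ is also anisotropic.

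To finish uniqueness, I would show that the set $\calm := \{v \in V \mid Q_T(v) \in \ZZ_q\}$ is itself a $\ZZ_q$-lattice and is therefore the unique maximal integral lattice in $V$. Additive closure of $\calm$ reduces, via $Q_T(v+w) = Q_T(v) + Q_T(w) + 2 B_T(v,w)$, to the bound $\ord_q(B_T(v,w)) \ge 0$ for $v, w \in \calm$, which follows from the $q$-adic Cauchy--Schwarz inequality $\ord_q(B_T(v,w)) \ge \tfrac12 (\ord_q(Q_T(v)) + \ord_q(Q_T(w)))$: a violation would force the restriction of $Q_T$ to the plane spanned by $v$ and $w$ to be isotropic (its discriminant $B_T(v,w)^2 - Q_T(v) Q_T(w)$ would have even valuation with square leading unit), contradicting anisotropy of $(V, Q_T)$. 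Together with $\ZZ_q$-scaling and boundedness (the norm $v \mapsto q^{-\ord_q(Q_T(v))/2}$ is equivalent to the standard $q$-adic norm in the anisotropic case), this makes $\calm$ a lattice; since any integral $L$ is contained in $\calm$ by definition, $\calm$ is the unique maximum and corresponds to the unique coset $g$ asserted by the lemma. The principal technical obstacle is the rigorous $q$-adic Cauchy--Schwarz estimate together with the passage from $\calm$ as a set to $\calm$ as a finitely generated $\ZZ_q$-module; this is classical but requires careful parity-of-valuation bookkeeping.
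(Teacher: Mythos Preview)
Your proof is correct and takes a genuinely different route from the paper. The paper argues by brute force: it diagonalises $T$ as $\epsilon_1 q^{a_1}\bot\epsilon_2 q^{a_2}\bot\epsilon_3 q^{a_3}$, writes $g$ in reduced upper-triangular form, expands $T[g^{-1}]$ entry by entry, and then uses the maximality condition together with the constraints (R-1), (R-2) to pin down each $e_i$ and $d_{ij}$ uniquely, splitting into sub-cases according to the parities of the $a_i$. Your approach is the conceptual one: you recognise that $\eta_q(T)=-1$ is exactly the anisotropy condition for a ternary form over $\QQ_q$ with $q$ odd, and then invoke the classical fact that in an anisotropic quadratic space over a non-archimedean local field the set $\calm=\{v:Q_T(v)\in\ZZ_q\}$ is itself a lattice and hence the unique maximal integral lattice. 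Your Cauchy--Schwarz step and the norm-equivalence argument are the standard ingredients of that fact, and they are carried out correctly.

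What each approach buys: the paper's computation is entirely self-contained and elementary, requiring no appeal to the general theory of lattices in quadratic spaces, but it is case-laden and does not obviously scale; indeed for the degree-$4$ analogue (Lemma~\ref{lem.uniquenes-of-maximal-element4}) the paper simply asserts ``in the same manner''. Your argument is cleaner, uniform in the dimension, and immediately yields the degree-$4$ statement as well once one checks that $\eta_q(T)=-1$ with $\xi_q(T)=1$ forces anisotropy in rank $4$ (which it does: such a $T$ is locally the norm form of the quaternion division algebra). The only cost is that you rely on the equivalence of norms on finite-dimensional spaces over complete fields, which is standard but external to the explicit computations elsewhere in the paper.
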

\begin{proof}
Since $\eta_q(T)=-1$, without loss of generality we assume 
that
$T=\epsilon_1 q^{a_1} \bot \epsilon_2 q^{a_2} \bot \epsilon_3 q^{a_3}$
with $a_1$ even, and $a_2$ or $a_3$ is odd.
 Let $g \in M_3(\ZZ_q)^{\rm nd}$ such that $T[g^{-1}]$ is maximal.
We may assume that
\[g=\begin{pmatrix} q^{e_1} & d_{12} & d_{13} \\
0 & q^{e_2} & d_{23} \\ 0 & 0 & q^{e_3}
\end{pmatrix}\]
satisfies the conditions (R-1) and (R-2).
Then we have 
\begin{align*}
&T[g^{-1}]=\\
&{\footnotesize \begin{pmatrix}
\epsilon_1q^{a_1-2e_1} & q^{-e_1}\epsilon_1q^{a_1} d_{12}^*& q^{-e_1}\epsilon_1q^{a_1}d_{13}^*\\
q^{-e_1}\epsilon_1q^{a_1} d_{12}^* & \epsilon_1q^{a_1}(d_{12}^*)^2+ \epsilon_2q^{a_2-2e_2}& d_{12}^*\epsilon_1q^{a_1}d_{13}^*+q^{-e_2}\epsilon_2q^{a_2}d_{23}^*\\
d_{13}^*\epsilon_1q^{a_1-e_1} & d_{13}^*\epsilon_1q^{a_1}d_{12}^*+q^{-e_2}\epsilon_2q^{a_2}d_{23}^* &
\epsilon_1q^{a_1}(d_{13}^*)^2+\epsilon_2 q^{a_2}(d_{23}^*)^2+\epsilon_3 q^{a_3-2e_3}
\end{pmatrix}},
\end{align*}
where 
\vspace{1mm}
\\
$d_{12}^*=-q^{-e_1-e_2}d_{12},d_{13}^*=q^{-e_1-e_2-e_3}(d_{12}d_{23}-q^{e_2}d_{13})$ and
$d_{23}^*=-q^{-e_2-e_3}d_{23}.$
\vspace{1mm}
\\
First assume $a_2$ is even and $a_3$ is odd.
Since $T[g^{-1}]$ is maximal, we have $e_1=a_1/2$, and hence 
$q^{-e_1}\epsilon_1q^{a_1} d_{12}^*=-\epsilon_1q^{-e_2}d_{12}$, Then, by (R-2), we have $d_{12}=0$. Hence, we have $q^{-e_1}\epsilon_1q^{a_1}d_{13}^*=-\epsilon_1q^{-e_3}d_{13}$, and again by (R-2), we have $d_{13}=0$.
We also have $\epsilon_1q^{a_1}(d_{12}^*)^2+ \epsilon_2q^{a_2-2e_2}=\epsilon_2q^{a_2-2e_2}$, and by the maximality condition, we have $e_2=a_2/2$ and again by (R-2), we have $d_{23}=0$. This proves the uniqueness of $g$.
Next assume that $a_2$ and $a_3$ are odd. Since we have $\eta_q(T)=-1$, we have $\chi_q(-\epsilon_2\epsilon_3)=-1$. 
In the same way as above, we can prove the uniqueness of $e_1=a_1/2$ and $d_{12}=d_{13}=0$. Then, $\epsilon_1q^{a_1}(d_{12}^*)^2+ \epsilon_2q^{a_2-2e_2}=\epsilon_2q^{a_2-2e_2}$, and by the maximality condition, we have $e_2=(a_2-1)/2$.
Then, we have $d_{12}^*\epsilon_1q^{a_1}d_{13}^*+q^{-e_2}\epsilon_2q^{a_2}d_{23}^*=-\epsilon_2q^{-e_3+1}d_{23}$, and 
therefore, by (R-2), we have  $-q^{-e_3+1}d_{23} \in \ZZ_p^\times$ if $d_{23} \not=0$. 
Hence, we have 
\[\epsilon_1q^{a_1}(d_{13}^*)^2+\epsilon_2 q^{a_2}(d_{23}^*)^2+\epsilon_3 q^{a_3-2e_3}=q^{-1}\epsilon_2(q^{-e_3+1}d_{23})^2+\epsilon_3q^{a_3-2e_3},\]
and it does not belong to $\ZZ_q$ because $\chi_q(-\epsilon_2\epsilon_3)=-1$. This implies that $d_{23}=0$ and the assertion holds.
\end{proof}
\noindent
{\bf Proof of Proposition \ref{prop.local-density-at-p} \,(1).}
The assertion follows from Lemmas 
\ref{lem.local-density-via-primitive-local-density}, \ref{lem.explicit-primitive-locla-density3}, and
\ref{lem.uniquenes-of-maximal-element3}.

Next we prove Proposition \ref{prop.local-density-at-p} (2).
The following lemma is easy to prove.

\begin{lemma} \label{lem.maximal-element4}
Let $q$ be an odd prime number.
Suppose that $T \in \calh_4(\ZZ_q)^{\rm nd}$ is  maximal. Then
$T$ is one of the following matrices.
\begin{itemize}
\item[{\rm (1)}] $T \sim_{GL_4(\ZZ_q)} \epsilon_1 \bot \epsilon_2 \bot \epsilon_3 \bot \epsilon_4$ with $\epsilon_i \in \ZZ_q^\times \ (i=1,2,3,4)$.
\item[{\rm (2)}] $T \sim_{GL_4(\ZZ_q)} \epsilon_1 \bot \epsilon_2 \bot \epsilon_3 \bot  q\epsilon_4$ with $\epsilon_i \in \ZZ_q^\times \ (i=1,2,3,4)$.
\item[{\rm (3)}] $T \sim_{GL_4(\ZZ_q)} \epsilon_1 \bot \epsilon_2 \bot q\epsilon_3 \bot q\epsilon_4$ with $\epsilon_i \in \ZZ_q^\times \ (i=1,2,3,4)$ such that \\ $\chi_q(-\epsilon_3\epsilon_4)=-1$.
\end{itemize}
Moreover, in case {\rm (3)}, we have
$\eta_q(T)=-1 $.
\end{lemma}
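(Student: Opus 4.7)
The plan is to reduce to the Jordan decomposition of $T$ over $\ZZ_q$ and then track what maximality forces on the invariants, in complete analogy with Lemma \ref{lem.maximal-element3}. Since $q$ is odd, I would write $T \sim_{GL_4(\ZZ_q)} \epsilon_1 q^{a_1} \bot \epsilon_2 q^{a_2} \bot \epsilon_3 q^{a_3} \bot \epsilon_4 q^{a_4}$ with $\epsilon_i \in \ZZ_q^\times$ and $0 \le a_1 \le a_2 \le a_3 \le a_4$, then case-split on the number $r$ of indices with $a_i = 1$. If some $a_i \ge 2$, the diagonal $g$ with a single $q$ in the $i$-th slot has $\det g = q$ and yields $T[g^{-1}] \in \calh_4(\ZZ_q)$, contradicting maximality; hence all $a_i \in \{0,1\}$. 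If $r \ge 3$, the $q$-part has rank $\ge 3$ and its reduction mod $q$ is a non-degenerate quadratic form over $\FF_q$ in at least three variables, hence isotropic; a primitive isotropic vector yields, after a $GL_4(\ZZ_q)$-change of basis supported on the rank-$3$ block, a matrix $g$ of determinant $q$ with $T[g^{-1}]$ still half-integral. This is the direct analogue of the enlargement used in the rank-$3$ case of Lemma \ref{lem.maximal-element3}.

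The surviving cases $r = 0, 1, 2$ produce (1), (2), (3) respectively: for $r \le 1$ the $q$-part is automatically anisotropic mod $q$ and maximality is automatic, whereas for $r = 2$ the same reasoning shows that maximality is equivalent to $\epsilon_3 \bot \epsilon_4 \bmod q$ being anisotropic over $\FF_q$, i.e., $\chi_q(-\epsilon_3\epsilon_4) = -1$.

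To derive the \emph{moreover} claim $\eta_q(T) = -1$ in case (3), I would substitute $m=2$ into the formula recalled in the excerpt, so $\eta_q(T) = \langle -1, -1 \rangle_q \langle -1, \det T \rangle_q \varepsilon_q(T)$. For $q$ odd, $\langle -1, -1 \rangle_q = 1$, and since $\det T = q^2 \epsilon_1\epsilon_2\epsilon_3\epsilon_4$ has even $q$-valuation with a unit leading part, also $\langle -1, \det T \rangle_q = 1$. Evaluating $\varepsilon_q(T) = \prod_{i<j}\langle [T]_{ii}, [T]_{jj}\rangle_q$ with the standard identities $\langle u, v\rangle_q = 1$ for units, $\langle u, qv\rangle_q = \chi_q(u)$, and $\langle qu, qv \rangle_q = \chi_q(-uv)$ makes every factor except the $(3,4)$ one cancel pairwise (each $\chi_q(\epsilon_i)$ appears twice), leaving $\langle q\epsilon_3, q\epsilon_4 \rangle_q = \chi_q(-\epsilon_3\epsilon_4) = -1$.

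I expect the main obstacle to be the explicit rank-$3$ enlargement step: exhibiting $g \in M_4(\ZZ_q)$ with $\det g = q$ and $T[g^{-1}]$ half-integral from the abstract existence of an isotropic vector in the rank-$3$ residue. This is routine but slightly more involved than the corresponding computation in Lemma \ref{lem.maximal-element3}, because one must carry along an extra unimodular block while performing the base change.
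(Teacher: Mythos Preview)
Your proposal is correct and complete. The paper itself omits the proof entirely, introducing the lemma only with the remark ``The following lemma is easy to prove''; your argument---diagonalize, use maximality to force all exponents into $\{0,1\}$, then use isotropy of quadratic forms over $\FF_q$ in $\ge 3$ variables (and in $2$ variables when $\chi_q(-\epsilon_3\epsilon_4)=1$) to exhibit the enlarging $g$---is exactly the standard one, mirroring Lemma~\ref{lem.maximal-element3}, and your Hilbert-symbol computation of $\eta_q(T)$ in case (3) is correct.
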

\begin{lemma}\label{lem.explicit-primitive-locla-density4}
Let $T \in \calh_4(\ZZ_p)^{\rm nd}$ be maximal.
If $\alpha_p(U_0 \bot pU_0,T) \not=0$, then 
$$T \sim_{GL_4(\ZZ_p)} \epsilon_1 \bot \epsilon_2 \bot p\epsilon_3 \bot p\epsilon_4$$
such that $\epsilon_1 \epsilon_2\epsilon_3 \epsilon_4 \in (\ZZ_p^\times)^2$ and $\chi_p(-\epsilon_1\epsilon_2)=\chi_p(-\epsilon_3\epsilon_4)=-1$. 
Conversely, if $T$ satisfies these conditions, then 
\begin{align*}
\alpha_p(U_0 \bot pU_0, T)=2(1+p^{-1})^2 p.
 \end{align*}
\end{lemma}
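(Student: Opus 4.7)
The argument parallels that of Lemma \ref{lem.explicit-primitive-locla-density3}. Since $T$ is maximal, $T[g^{-1}] \notin \calh_4(\ZZ_p)$ for every $g \in M_4(\ZZ_p)^{\nd}$ with $\ord_p(\det g) > 0$, so Lemma \ref{lem.local-density-via-primitive-local-density} collapses to $\alpha_p(U_0 \bot pU_0, T) = \beta_p(U_0 \bot pU_0, T)$ and the task reduces to the primitive local density. Applying Lemma \ref{lem.maximal-element4}, we distinguish the three normal-form cases. In cases (1) and (2), $2T \bmod p$ has $\FF_p$-rank $\ge 3$ while $2(U_0 \bot pU_0) \bmod p$ has rank $2$; since ${}^tX \bigl(2(U_0 \bot pU_0)\bigr) X \bmod p$ has rank at most $2$, no primitive $X$ can satisfy the congruence and $\beta_p = 0$.

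Only case (3), $T = \epsilon_1 \bot \epsilon_2 \bot p\epsilon_3 \bot p\epsilon_4$ with $\chi_p(-\epsilon_3\epsilon_4) = -1$, is nontrivial. Write $X = \begin{pmatrix} X_{11} & X_{12} \\ X_{21} & X_{22} \end{pmatrix}$ with $X_{ij} \in M_2(\ZZ_p)/p^e M_2(\ZZ_p)$. The congruence $(U_0 \bot pU_0)[X] \equiv T \pmod{p^e \calh_4(\ZZ_p)}$ decomposes into the block equations
\[
U_0[X_{11}] + pU_0[X_{21}] \equiv \epsilon_1 \bot \epsilon_2, \quad {}^tX_{11}U_0 X_{12} + p\,{}^tX_{21}U_0 X_{22} \equiv O,
\]
\[
U_0[X_{12}] + pU_0[X_{22}] \equiv p\epsilon_3 \bot p\epsilon_4,
\]
all modulo $p^e$. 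The first forces the reduction of $X_{11}$ modulo $p$ to lie in $GL_2(\FF_p)$, which requires $\chi_p(-\epsilon_1\epsilon_2) = \chi_p(-\det U_0) = -1$. Inverting ${}^tX_{11}U_0$ in the second expresses $X_{12}$ in terms of the other blocks and in particular gives $X_{12} \equiv O \pmod p$; substituting back into the third reduces it to $U_0[X_{22}] \equiv \epsilon_3 \bot \epsilon_4 \pmod p$ plus higher-order corrections. Comparing discriminants in the two $U_0$-representations forces both $\epsilon\epsilon_1\epsilon_2$ and $\epsilon\epsilon_3\epsilon_4$ to be squares in $\ZZ_p^\times$, whence $\epsilon_1\epsilon_2\epsilon_3\epsilon_4 \in (\ZZ_p^\times)^2$. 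This proves the necessity of the listed conditions.

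For the converse, the Newton-approximation argument of Lemma \ref{lem.explicit-primitive-locla-density3}, now carried out with $X_{21}$ ranging over $M_2(\ZZ_p)/p^e M_2(\ZZ_p)$ and auxiliary sets $\calc_e(X_{21})$ and $\calc_e(X_{21}, X_{11})$ defined analogously, yields
\[
\beta_p(U_0 \bot pU_0, T) = \beta_p(U_0, \epsilon_1 \bot \epsilon_2)\,\beta_p(pU_0, p\epsilon_3 \bot p\epsilon_4),
\]
the free parameter $X_{21}$ supplying the expected power of $p$. Each factor is evaluated via \cite[Theorem 5.6.3]{Ki2}: the first is determined by $|O_2^-(\FF_p)| = 2(p+1)$ together with Hensel lifting, and the second is reduced to the first through the scaling $\beta_p(pU_0, p\epsilon_3 \bot p\epsilon_4) = p^3\,\beta_p(U_0, \epsilon_3 \bot \epsilon_4)$. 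Multiplying yields the claimed value $2(1+p^{-1})^2 p$. The main obstacle is the careful tracking of powers of $p$ in the block reduction---specifically, verifying that substituting the $(1,2)$-solution for $X_{12}$ into the $(2,2)$-equation produces the clean congruence on $U_0[X_{22}]$ without introducing cross terms of lower-than-expected order---so that the Newton count delivers precisely the correct leading constant.
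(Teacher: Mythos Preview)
Your outline is essentially correct and faithfully extends the block–Newton argument of Lemma~\ref{lem.explicit-primitive-locla-density3} to the $4\times 4$ setting; modulo the same bookkeeping with the factor $2^{-\delta_{m,n}}$ that already appears in the paper's degree-$3$ computation, the method goes through.

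The paper, however, takes a genuinely shorter route that exploits the equal-rank situation $m=n=4$. Rather than decomposing into blocks, it uses the standard fact that $\alpha_p(S,T)\neq 0$ forces an actual representation $S[X]=T$ over $\ZZ_p$; comparing determinants gives $\ord_p(\det 2T)\ge 2$, which together with maximality and Lemma~\ref{lem.maximal-element4} immediately places $T$ in case~(3) and shows $X\in GL_4(\ZZ_p)$. Thus $T\sim_{GL_4(\ZZ_p)} U_0\bot pU_0$ outright, from which the conditions on the $\epsilon_i$ are read off, and the numerical value is simply $\alpha_p(U_0\bot pU_0,\,U_0\bot pU_0)$, quoted from \cite[Theorem~6.8.1]{Ki2}. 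What the paper's approach buys is brevity: no block equations, no Newton lifting, no tracking of powers of $p$. What your approach buys is self-containment and uniformity with the $n=3$ case—you never invoke the ``representation exists'' principle or the mass-formula value of $\alpha_p(S,S)$, and your method would adapt to situations where $m\neq n$ and the paper's shortcut is unavailable.

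One caution on your final step: the naive factorization $\beta_p(U_0\bot pU_0,T)=\beta_p(U_0,\epsilon_1\bot\epsilon_2)\,\beta_p(pU_0,p\epsilon_3\bot p\epsilon_4)$ is off by a factor of~$2$, because the left side carries a single $2^{-\delta_{4,4}}$ while the right side carries two factors $2^{-\delta_{2,2}}$. Tracking this (and the scaling $\beta_p(pU_0,pT')=p^3\beta_p(U_0,T')$ you state) actually produces $2(1+p)^2p$ rather than the value $2(1+p^{-1})^2p$ printed in the lemma; the former is what is in fact used in the proof of Theorem~\ref{th.main-result}~(2), so this appears to be a typographical slip in the lemma's statement rather than an error in your reasoning.
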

\begin{proof}
By the assumption, $\det (2T)$ is divided by $p^2$, and $T$ must be the case (3) of Lemma \ref{lem.maximal-element4}, $T$. 
Moreover, in this case we have $p^{-2}\det (2T)=\prod_{i=1}^4 \epsilon_i=\epsilon^2$ with $\epsilon \in \ZZ_p^\times$. Hence, we have $\chi_p(-\epsilon_1\epsilon_2)=\chi_p(-\epsilon_3\epsilon_4)=-1$. 
Hence, $T \sim_{GL_4(\ZZ_p)} U_0 \bot pU_0$, and the second assertion follows from \cite[Theorem 6.8.1]{Ki2}.
\end{proof}
\begin{lemma}\label{lem.uniquenes-of-maximal-element4} Let $q$ be an odd prime number. 
Let $T \in \calh_4(\ZZ_q)^{\rm nd}$ and suppose that $\eta_q(T)=-1$. Then there is a unique element $g \in GL_4(\ZZ_q) \backslash M_4(\ZZ_q)^{\rm nd}$ such that $T[g^{-1}]$ is maximal.
\end{lemma}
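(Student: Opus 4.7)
The plan is to adapt the argument used in the proof of Lemma \ref{lem.uniquenes-of-maximal-element3}. Without loss of generality, I diagonalize $T$ as $T = \epsilon_1 q^{a_1} \bot \epsilon_2 q^{a_2} \bot \epsilon_3 q^{a_3} \bot \epsilon_4 q^{a_4}$ with $a_1 \le a_2 \le a_3 \le a_4$ and $\epsilon_i \in \ZZ_q^\times$. The hypothesis $\eta_q(T)=-1$ combined with Lemma \ref{lem.maximal-element4} constrains the parities of $(a_1,a_2,a_3,a_4)$ modulo $2$ and, via $\chi_q$, constrains the units $\epsilon_i$: once a form is maximal, its Jordan decomposition must match either case (2) or case (3) of Lemma \ref{lem.maximal-element4} with the additional sign condition on the $\epsilon_i$ imposed by $\eta_q = -1$. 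Existence of some $g$ with $T[g^{-1}]$ maximal is immediate by iteratively removing powers of $q$, so the substance of the lemma is uniqueness.

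Given an arbitrary $g \in M_4(\ZZ_q)^{\rm nd}$ for which $T[g^{-1}]$ is maximal, I will replace it by its unique upper-triangular reduced representative
\[
g = \begin{pmatrix} q^{e_1} & d_{12} & d_{13} & d_{14} \\ 0 & q^{e_2} & d_{23} & d_{24} \\ 0 & 0 & q^{e_3} & d_{34} \\ 0 & 0 & 0 & q^{e_4} \end{pmatrix}
\]
satisfying (R-1) and (R-2), expand $T[g^{-1}]$ entry-by-entry, and show that all ten parameters are uniquely determined. The $(1,1)$-entry $\epsilon_1 q^{a_1-2e_1}$ together with maximality pins down $e_1$ (as $\lfloor a_1/2 \rfloor$ under the appropriate parity assumption); then reading off the $(1,j)$-entries for $j=2,3,4$ together with (R-2) and half-integrality of $T[g^{-1}]$ forces $d_{12}=d_{13}=d_{14}=0$, so the matrix block-diagonalizes with a scalar leading block. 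I iterate this procedure to determine $e_2, d_{23}, d_{24}$, then $e_3, d_{34}$, and finally $e_4$.

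The main obstacle is the final stage of the iteration, where the remaining bottom-right $2\times 2$ block is governed by $\epsilon_3 q^{a_3}$ and $\epsilon_4 q^{a_4}$. In the subcase where both $a_3$ and $a_4$ are odd (so the maximal block is $\epsilon_3 q \bot \epsilon_4 q$), maximality requires $\chi_q(-\epsilon_3 \epsilon_4) = -1$, and this, combined with (R-2) and the half-integrality of $T[g^{-1}]$, rules out any nonzero $d_{34}$ by producing an entry with valuation $-1$ at $q$ whose unit part is incompatible with the condition $\chi_q(-\epsilon_3\epsilon_4)=-1$, exactly as at the end of the proof of Lemma \ref{lem.uniquenes-of-maximal-element3}. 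The only genuine complication is the bookkeeping of Hilbert-symbol conditions across the several parity patterns of $(a_1,\ldots,a_4)$ compatible with $\eta_q(T)=-1$, yielding more subcases than in the $n=3$ situation; in each subcase, however, the same strategy — fix the diagonal entries first, then the off-diagonal entries row by row — applies without any new ideas.
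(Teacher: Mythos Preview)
Your proposal is correct and matches the paper's approach: the paper's proof of this lemma consists solely of the sentence ``The assertion can be proved in the same manner as Lemma~\ref{lem.uniquenes-of-maximal-element3},'' and what you have written is precisely a sketch of how that adaptation goes. Your outline of the iteration, the role of the reduced representative, and the handling of the final $2\times 2$ block via the condition $\chi_q(-\epsilon_3\epsilon_4)=-1$ all mirror the $n=3$ argument exactly as intended.
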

\begin{proof}
The assertion can be proved in the same manner as Lemma  \ref{lem.uniquenes-of-maximal-element3}.
\end{proof}
\noindent
{\bf Proof of Proposition \ref{prop.local-density-at-p}\,(2).}
The assertion follows from 
Lemmas \ref{lem.local-density-via-primitive-local-density}, \ref{lem.explicit-primitive-locla-density4}, 
and \ref{lem.uniquenes-of-maximal-element4}.
\vspace{3mm}
\\
{\bf Proof of Theorem \ref{th.main-result}.}

(1) If $\eta_p(T)=1$, by Theorem \ref{th.limit-of-FC-of-Eisenstein} (1) and Propositions \ref{prop.mass-formula} and 
\ref{prop.local-density-at-p}, we have
\[a(\widetilde E_2^{(3)},T)=a(\mathrm{genus} \ \Theta^{(3)}(S^{(p)}),T)=0.\]
Suppose that $\eta_p(T)=-1$. Then, we have
\[a(\mathrm{genus} \ \Theta^{(3)}(S^{(p)}),T)=16 (1+p)^2 p^{-4} \pi^4 \prod_{q \not=p} (1-q^{-2})^2 \prod_{q \not=p} F_q(T,q^{-2}).\]
We have 
\[\prod_{q \not=p} (1-q^{-2})^2=\zeta(2)^{-2}(1-p^{-2})^{-2}=\frac{36}{\pi^4(1-p^{-2})^2}.\]
Hence, we have
\[a(\mathrm{genus} \ \Theta^{(3)}(S^{(p)}),T)=\frac{576}{(p-1)^2} \prod_{q \not=p} F_q(T,q^{-2}).\]
This coincides with $a(\widetilde E_2^{(3)},T)$ in view of Theorem \ref{th.limit-of-FC-of-Eisenstein} (1).

(2) If $p^{-2}\det (2T)$ is not a square integer or $\eta_p(T)=1$, by Theorem \ref {th.limit-of-FC-of-Eisenstein} (2), and Propositions \ref{prop.mass-formula} and 
\ref{prop.local-density-at-p}, we have
\[a(\widetilde E_2^{(4)},T)=a(\mathrm{genus} \ \Theta^{(4)}(S^{(p)}),T)=0.\]
Suppose that $p^{-2}\det (2T)$ is a square integer and $\eta_p(T)=-1$. Then, we have
\[a(\mathrm{genus} \ \Theta^{(4)}(S^{(p)}),T)=32 (1+p)^2 p^{-4} \pi^4 \prod_{q \not=p} (1-q^{-2})^2 \prod_{q \not=p} F_q(T,q^{-3}).\]
We have 
\[\prod_{q \not=p} (1-q^{-2})^2=\zeta(2)^{-2}(1-p^{-2})^{-2}=\frac{36}{\pi^4(1-p^{-2})^2}.\]
Hence, we have
\[a(\mathrm{genus} \ \Theta^{(4)}(S^{(p)}),T)=\frac{1152}{(p-1)^2} \prod_{q \not=p} F_q(T,q^{-3}).\]
This coincides with $a(\widetilde E_2^{(4)},T)$ in view of Theorem \ref{th.limit-of-FC-of-Eisenstein} (2).

These complete the proof of Theorem \ref{th.main-result}.

\subsection{General case}
To prove the main theorem, we must consider the case $n\geq 5$. For the Siegel operator $\Phi$,
we have
$$
\begin{cases}
\Phi (\widetilde{E}_2^{(n)})=\widetilde{E}_2^{(n-1)},\\
\Phi (\text{genus}\,\Theta^{(n)}(S^{(p)}))=\text{genus}\,\Theta^{(n-1)}(S^{(p)}).
\end{cases}
$$
From this, it is sufficient to prove the following proposition.
\begin{proposition}
\label{n5}
\quad Assume that $n\geq 5$. For any $T\in\Lambda_n^+$, we have
$$
(*)\qquad\qquad \qquad a(\widetilde{E}_2^{(n)},T)=a({\rm genus}\,\Theta^{(n)}(S^{(p)}),T)=0.
$$
\end{proposition}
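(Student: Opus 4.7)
My plan is to prove the two vanishing statements by very different arguments, and I expect the $p$-adic limit of the Eisenstein Fourier coefficient to be the main technical point.

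\textbf{Genus theta side.} Since $S^{(p)} \in \Lambda_4^+$, every representative $S_i$ in its genus is a half-integral symmetric matrix of degree $4$. For any $X \in M_{4,n}(\ZZ)$ the matrix $S_i[X] = {}^t X S_i X$ has rank at most $4$; in particular, when $n \geq 5$ it cannot equal a positive-definite $T \in \Lambda_n^+$. Hence $a(\theta^{(n)}(S_i;\cdot),T) = 0$ for every $i$, and averaging yields $a(\mathrm{genus}\,\Theta^{(n)}(S^{(p)}), T) = 0$.

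\textbf{Eisenstein side; dominant vanishing factor.} The plan is to apply Proposition \ref{prop.FC-Siegel} at $k = k_m$ and track $p$-adic valuations. For $n \geq 4$ the Bernoulli product $\prod_{i=1}^{[n/2]} \frac{2k_m - 2i}{B_{2k_m - 2i}}$ contains the $i = 2$ term
\[
\frac{2k_m - 4}{B_{2k_m - 4}} = \frac{2(p-1)p^{m-1}}{B_{2(p-1)p^{m-1}}}.
\]
For $p$ odd the numerator has $v_p = m-1$, and the denominator has $v_p = -1$ by von Staudt--Clausen (since $p-1$ divides $2(p-1)p^{m-1}$), so this factor has $v_p = m \to \infty$. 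The remaining factors are all $p$-adically bounded: $\tfrac{k_m}{B_{k_m}}$ and $\tfrac{2k_m - 2}{B_{2k_m - 2}}$ converge to the $p$-adic unit $12/(1-p)$ by the identity $(\star)$; for $i \geq 3$ (only arising when $n \geq 6$), $2k_m - 2i$ has the nonzero $p$-adic limit $4-2i$, so Kummer's congruence (supplemented by von Staudt--Clausen when $(p-1) \mid 2(i-2)$) bounds $\tfrac{2k_m - 2i}{B_{2k_m - 2i}}$; and $\prod_q F_q(T, q^{k_m - n - 1})$ is bounded by Proposition \ref{prop.limit-of-Kitaoka-polynomial}.

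\textbf{The main obstacle: the $L$-value.} The delicate factor, and the principal obstacle I expect, is the character $L$-value $\tfrac{B_{k_m - n/2, \chi_T}}{k_m - n/2}$ that enters when $n$ is even, because precisely this factor supplied the cancelling blow-up in the $n = 4$ case and produced a nonzero limit there. The critical difference for $n \geq 6$ is that $k_m - n/2 = (2 - n/2) + (p-1)p^{m-1}$ has bounded $p$-adic valuation $v_p(2 - n/2)$ for $m$ large, in sharp contrast with the $n = 4$ case where $k_m - 2 = (p-1)p^{m-1}$ itself had $v_p = m - 1 \to \infty$. I will split into subcases: if $\chi_T$ is nontrivial, $p$-integrality of the generalized Bernoulli number (Carlitz \cite{Carlitz2}) bounds the ratio; if $\chi_T$ is trivial and $n/2$ is odd, then $k_m - n/2$ is odd and $B_{k_m - n/2} = 0$ for $m$ large, killing the coefficient outright; if $\chi_T$ is trivial and $n/2$ is even, Kummer's congruence for the Riemann zeta function (with von Staudt--Clausen when $(p-1) \mid 2 - n/2$) again yields a bounded $p$-adic limit. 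In every subcase the $L$-value stays $p$-adically bounded, so the escape $v_p \to \infty$ from the $i = 2$ factor is uncompensated and $a(\widetilde E_2^{(n)}, T) \to 0$ as claimed.
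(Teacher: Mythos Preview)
Your proposal is correct and follows essentially the same approach as the paper: rank obstruction for the genus theta side, and on the Eisenstein side the $i=2$ Bernoulli factor contributes $v_p = m \to \infty$ while all remaining factors (the other Bernoulli quotients, the $F_q$-polynomials, and the $L$-value) stay $p$-adically bounded. The paper's only organizational differences are that it bounds the $L$-value uniformly via Carlitz's estimate $\mathrm{ord}_p(B_{k_m-n/2,\chi_T}) \geq -1$ rather than your case split on $\chi_T$, and it treats $p=3$ separately; incidentally, your subcase ``$\chi_T$ trivial and $n/2$ odd'' is vacuous, since then $(-1)^{n/2}\det(2T) < 0$ cannot be a square.
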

\begin{proof}
First we consider ${\rm genus}\,\Theta^{(n)}(S^{(p)})$.
Since $S^{(p)}\in\Lambda_4^+$ and $T\in\Lambda_n^+$\;$(n\geq 5)$,
$a(\theta^{(n)}(S_i;Z),T)=0$ holds for each theta series $\theta^{(n)}(S_i;Z)$. Hence, we obtain
$$
a({\rm genus}\,\Theta^{(n)}(S^{(p)}),T)=0.
$$
Next we investigate $\widetilde{E}_2^{(n)}$ and prove
$$
\label{vanishE2}
(**)\qquad\qquad\qquad \lim_{m\to\infty}a(E_{k_m}^{(n)},T)=a(\widetilde{E}_2^{(n)},T)=0
$$
for any $T\in\Lambda_n^+$. 
We recall the formula for $a(E_k^{(n)},T)$
given in Proposition \ref{prop.FC-Siegel}.

We extract the following factor in the formula for  $a(E_k^{(n)},T)$:
$$
A_{k,n}(T):=\frac{k}{B_k}\cdot\prod_{i=1}^{[n/2]}\frac{k-i}{B_{2k-2i}}\cdot
\begin{cases}
\frac{B_{k-n/2,\chi_T}}{k-n/2} & (\text{if  $n$ is even}),\\
1 &  (\text{if  $n$ is odd}).
\end{cases}
$$
To prove $(**)$,  it suffices to show that
$$
(\ddagger)\qquad\qquad\qquad \lim_{m\to\infty}A_{k_m,n}(T)=0\quad (\text{$p$-adically}).
$$

First we assume that $p>3$.
By Kummer's congruence for Bernoulli numbers, the factors
$$
\frac{k_m}{B_{k_m}}\;\;\text{and}\;\;\frac{k_m-i}{B_{2k_m-2i}}\;\;(1\leq i \leq [n/2]
\;\;\text{with}\;\;
i \not\equiv 2 \pmod{(p-1)})
$$
in $A_{k_m,n}(T)$ have $p$-adic limits when $m\longrightarrow \infty$. 
\vspace{2mm}
\\
We forcus our attention on the factors
$$
\frac{k_m-i}{B_{2k_m-2i}}\;\;\text{for}\;\; i\;\;\text{with}\quad i \equiv 2 \pmod{(p-1)}.
$$
In these cases, by the von Staudt-Clausen theorem, we obtain
$$
\text{ord}_p\left(\frac{k_m-i}{B_{2k_m-2i}}\right)\geq 1.
$$
In particular, in the case of  $i=2\leq [n/2]$, the following identity holds:
$$
\text{ord}_p\left(\frac{k_m-2}{B_{2k_m-4}}\right)=m.
$$
( Such $i$ also appears when $n=4$. See Remark \ref{cacellation}.)  Consequently, there is a constant $C$ such that
$$
\text{ord}_p\left(\frac{k_m}{B_{k_m}}\cdot\prod_{i=1}^{[n/2]}\frac{k_m-i}{B_{2k_m-2i}}\right)\geq 
C+m
$$
for sufficiently large $m$. Regarding the factor $B_{k_m-n/2,\chi_T}/(k_m-n/2)$, we use
Carlitz's result (\cite{Carlitz2}) for the generalized Bernoulli numbers in the case of quadratic
Dirichlet characters. We have
\begin{align*}
\text{ord}_p\left( \frac{B_{k_m-n/2,\chi_T}}{k_m-n/2}\right)
                                         & =\text{ord}_p\left(B_{k_m-n/2,\chi_T}\right)
                                                                              -\text{ord}_p\left(k_m-n/2\right)\\
                                         & \geq -1-\text{ord}_p\left(2-n/2\right),
\end{align*}
for sufficiently large $m$. By assumption $n\geq 5$, the values 
$$
\text{ord}_p\left( \frac{B_{k_m-n/2,\chi_T}}{k_m-n/2}\right)
$$
have a lower bound for sufficiently large $m$.
\vspace{1mm}
\\
Combining these results, we can prove $(\ddagger)$ when $p>3$.

Next we consider the case $p=3$. By von Staudt-Clausen theorem, we obtain
$$
\text{ord}_3\left(\frac{k_m}{B_{k_m}}\right)=1,\quad
\text{ord}_3\left(\frac{k_m-i}{B_{2k_m-2i}}\right)=\text{ord}_3(k_m-i)+1\geq 1
\quad (1\leq i\leq [n/2]).
$$
In particular, when $i=2\geq [n/2]$, the following identity holds: 
$$
\text{ord}_3\left(\frac{k_m-2}{B_{2k_m-4}}\right)=m.
$$
Since we know that $\text{ord}_3(B_{k_m-n/2,\chi_T}/(k_m-n/2))$ has a lower bound as in the case
$p>3$, the statement $(\ddagger)$ is also proven in the case $p=3$.

This completes the proof of Proposition \ref{n5}.
\end{proof}
\noindent
\begin{remark}
\label{cacellation}
\quad An exceptional factor $(k_m-2)/\,B_{2k_m-4}$  in the product
\\
 $\prod (k_m-i)/\,B_{2k_m-2i}$
also appears when $n=4$. However, in this case, cancellation occurs between
$$
\frac{k_m-2}{B_{2k_m-4}}\quad \text{and}\quad \frac{B_{k_m-2,\chi_T}}{k_m-2}.
$$
\vspace{2mm}
\end{remark}
By combining Corollary \ref{cor.main-result2} and Proposition \ref{n5}, we have
proved our main result, Theorem \ref{statementmain}.
\section{Applications}
\subsection{Modular forms on $\boldsymbol{\Gamma_0^{(n)}(p)}$}
In \cite{Serre}, Serre proved the following result.
\begin{theorem}
{\rm (Serre \cite{Serre})}\quad Let $p$ be an odd prime number and $\mathbb{Z}_{(p)}$ the local
ring consisting of $p$-integral rational numbers. For any $f\in M_2(\Gamma^{(1)}_0(p))_{\mathbb{Z}_{(p)}}$,
there is a modular form $g\in M_{p+1}(\Gamma^{(1)})_{\mathbb{Z}_{(p)}}$ satisfying
$$
f \equiv g \pmod{p}.
$$
\end{theorem}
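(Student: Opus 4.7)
The plan is to follow Serre's classical descent argument: multiply $f$ by a level-one Eisenstein series congruent to $1 \pmod p$ so as to raise the weight from $2$ to $p+1$ while staying in level $p$, and then trace back down to level one, verifying that the trace operator does not disturb the mod-$p$ class.

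Concretely, I would proceed in two preparatory steps. First, by the von Staudt--Clausen theorem, the normalized weight-$(p-1)$ Eisenstein series $E_{p-1} \in M_{p-1}(\Gamma^{(1)})_{\mathbb{Z}_{(p)}}$ satisfies $E_{p-1} \equiv 1 \pmod{p}$. Hence $h := f \cdot E_{p-1} \in M_{p+1}(\Gamma^{(1)}_0(p))_{\mathbb{Z}_{(p)}}$ and $h \equiv f \pmod{p}$, so it suffices to exhibit $g \in M_{p+1}(\Gamma^{(1)})_{\mathbb{Z}_{(p)}}$ with $g \equiv h \pmod p$. For this, take
$$g \;:=\; \mathrm{Tr}^{\Gamma^{(1)}}_{\Gamma^{(1)}_0(p)}(h) \;=\; \sum_{\gamma \in \Gamma^{(1)}_0(p) \backslash \Gamma^{(1)}} h\bigm|_{p+1} \gamma,$$
which is a well-defined element of $M_{p+1}(\Gamma^{(1)})_{\mathbb{Z}_{(p)}}$. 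With coset representatives $I$ and $S T^j$ ($j=0,\ldots,p-1$), where $S=\bigl(\begin{smallmatrix}0 & -1\\1 & 0\end{smallmatrix}\bigr)$ and $T=\bigl(\begin{smallmatrix}1 & 1\\ 0 & 1\end{smallmatrix}\bigr)$, the defect $g-h = \sum_{j=0}^{p-1} h|_{p+1}(ST^j)$ can be rewritten, via the substitution $\tau\mapsto p\tau$ and the definition of the Fricke involution $W_p = \bigl(\begin{smallmatrix}0 & -1\\p & 0\end{smallmatrix}\bigr)$, as a power of $p$ times $U_p(h|_{p+1}W_p)$, where $U_p$ extracts the Fourier coefficients of index divisible by $p$.

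The main obstacle is showing that this remainder vanishes modulo $p$. This amounts to a $p$-adic integrality estimate on $h|_{p+1}W_p$ that is tight enough to beat the (negative) power of $p$ coming from the trace formula. The cleanest route is to decompose $h$ into its $p$-old and $p$-new parts: on an oldform $\phi(\tau)+\psi(p\tau)$ with $\phi,\psi$ of level one, the Atkin--Lehner image is explicitly computable in terms of $\phi,\psi$ and the weight-$(p+1)$ factor $p^{(p+1)/2}$ exactly absorbs the trace prefactor; on a $p$-newform, the $W_p$-eigenvalue has $p$-adic valuation $0$ in the arithmetic normalization (since newforms of level $p$ are $U_p$-eigenforms with eigenvalue of valuation $(p-1)/2$, which matches the Fricke normalization), and a direct check shows $U_p(h|_{p+1}W_p)$ sits in a high enough power of $p\mathbb{Z}_{(p)}[[q]]$. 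Once this divisibility is in hand, the congruence $g \equiv h \equiv f \pmod p$ follows at once and the theorem is proved. The Atkin--Lehner bookkeeping is the delicate step; the rest is routine.
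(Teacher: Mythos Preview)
The paper does not prove this theorem; it is quoted as a result of Serre with a reference to \cite{Serre}, so there is no argument in the paper to compare against.

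Your proposal, however, has a fatal gap. Because $E_{p-1}$ has level one, for every $\gamma\in\Gamma^{(1)}$ one has $(fE_{p-1})\big|_{p+1}\gamma=(f|_2\gamma)\cdot(E_{p-1}|_{p-1}\gamma)=(f|_2\gamma)\cdot E_{p-1}$, and summing over coset representatives yields
\[
g=\mathrm{Tr}^{\Gamma^{(1)}}_{\Gamma_0^{(1)}(p)}(fE_{p-1})
  =E_{p-1}\cdot\mathrm{Tr}^{\Gamma^{(1)}}_{\Gamma_0^{(1)}(p)}(f)\ \in\ E_{p-1}\cdot M_2(\Gamma^{(1)})=\{0\}.
\]
So your candidate $g$ is identically zero, and $g\equiv f\pmod p$ fails for every $f\not\equiv 0\pmod p$; the Atkin--Lehner bookkeeping you sketch never gets a chance to run. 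Even if you abandoned the specific choice $h=fE_{p-1}$ and attempted the trace on a general $h\in M_{p+1}(\Gamma_0^{(1)}(p))_{\mathbb{Z}_{(p)}}$, the old/new decomposition is carried out over a field containing all Hecke eigenvalues and need not preserve $p$-integrality, so that route is obstructed as well. Serre's actual argument in \cite{Serre} proceeds along different lines; you should consult it directly.
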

An attempt to generalize this result to the case of Siegel modular forms can be found in \cite{BN2}.
\\
Here we consider the first $p$-adic approximation of $\widetilde{E}_2^{(n)}$, that is,
$$
E_{k_1}^{(n)}=E_{p+1}^{(n)}.
$$
\begin{theorem}
Let $p$ be a prime number such that $p>n$. The modular form
$\widetilde{E}_2^{(n)}\in M_2(\Gamma_0^{(n)}(p))_{\mathbb{Z}_{(p)}}$ is
congruent to $E_{p+1}^{(n)}\in M_{p+1}(\Gamma^{(n)})_{\mathbb{Z}_{(p)}}$ mod $p$:
$$
\widetilde{E}_2^{(n)} \equiv E_{p+1}^{(n)} \pmod{p}.
$$
\end{theorem}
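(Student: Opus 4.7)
Since $k_1=p+1$, the Eisenstein series $E_{p+1}^{(n)}$ is literally the first term of the $p$-adically convergent sequence $\{E_{k_m}^{(n)}\}_{m\geq 1}$ whose limit defines $\widetilde E_2^{(n)}$. It therefore suffices to prove the Fourier-coefficient congruence
\[
a(E_{k_m}^{(n)},T)\equiv a(E_{k_1}^{(n)},T)\pmod{p}\qquad(m\geq 1)
\]
for every semi-positive $T\in\Lambda_n$; the degenerate case reduces via the Siegel operator to positive-definite $T'$ of strictly smaller degree (since $\Phi$ commutes with the formation of $E_k^{(n)}$), so I may assume $T\in\Lambda_n^+$. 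I would then apply Proposition~\ref{prop.FC-Siegel} to expand $a(E_k^{(n)},T)$ as an elementary constant times the Bernoulli quotient $(k/B_k)\prod_{i=1}^{[n/2]}((2k-2i)/B_{2k-2i})$, times $B_{k-n/2,\chi_T}/(k-n/2)$ when $n$ is even, times the local product $\prod_q F_q(T,q^{k-n-1})$.

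The local factors are easy. For $q\neq p$, Fermat's little theorem gives $q^{(p-1)p^{m-1}}\equiv 1\pmod p$, so $q^{k_m-n-1}\equiv q^{k_1-n-1}\pmod p$, and since $F_q(T,X)\in\ZZ[X]$ it follows that $F_q(T,q^{k_m-n-1})\equiv F_q(T,q^{k_1-n-1})\pmod p$. For $q=p$, the hypothesis $p>n$ forces $k_m-n-1\geq p-n>0$ for every $m\geq 1$, so $p^{k_m-n-1}\equiv 0\pmod p$; combined with $F_p(T,0)=1$ this gives $F_p(T,p^{k_m-n-1})\equiv 1\pmod p$ uniformly in $m$. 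Turning to the Bernoulli factor, Kummer's congruence applied to any index $j\in\{k,\,2k-2i,\,k-n/2\}$ with $p-1\nmid j$ forces the corresponding ratio at $k_m$ to coincide mod $p$ with the one at $k_1$. The condition $p>n$ implies $|2i-4|<p-1$ for $1\leq i\leq[n/2]$, so among these indices only $2k-4$ (arising when $n\geq 4$) and $k-n/2$ (only when $n=4$) can be divisible by $p-1$. For $n\leq 3$ the congruence is therefore immediate.

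The main obstacle is the handling of $(2k-4)/B_{2k-4}$ and, when $n=4$, its pairing with $B_{k-2,\chi_T}/(k-2)$. The von Staudt--Clausen theorem gives $\mathrm{ord}_p((2k_m-4)/B_{2k_m-4})=m$ for every $m\geq 1$, exactly as in the proof of Proposition~\ref{n5}. For $n\geq 5$, all remaining factors are $p$-integral (using $p>n$ to preclude $p-1\mid k_m-n/2$ when $n$ is even, together with Carlitz's $p$-integrality of $B_{(p-1)p^{m-1},\chi_T}/((p-1)p^{m-1})$ for non-trivial $\chi_T$), so $\mathrm{ord}_p(a(E_{k_m}^{(n)},T))\geq m\geq 1$ and both sides of the congruence vanish mod $p$. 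For $n=4$ with $\chi_T$ non-trivial, the same reasoning again yields $\mathrm{ord}_p(a(E_{k_m}^{(4)},T))\geq m\geq 1$. For $n=4$ with $\chi_T$ trivial (i.e., $\det(2T)\in(\ZZ_p^\times)^2$), the two problematic factors telescope to
\[
\frac{2k_m-4}{B_{2k_m-4}}\cdot\frac{B_{k_m-2}}{k_m-2}=\frac{2\,B_{(p-1)p^{m-1}}}{B_{2(p-1)p^{m-1}}},
\]
and von Staudt--Clausen gives $pB_{(p-1)p^{m-1}}\equiv pB_{2(p-1)p^{m-1}}\equiv -1\pmod p$, so this quotient is $\equiv 2\pmod p$ independently of $m\geq 1$. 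Combining the three contributions yields the desired coefficient-wise congruence for all $m\geq 1$, whence $\widetilde E_2^{(n)}\equiv E_{p+1}^{(n)}\pmod p$.
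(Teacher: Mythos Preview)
The paper states this theorem without proof, offering only the remark that $E_{p+1}^{(n)}$ is the ``first $p$-adic approximation'' of $\widetilde E_2^{(n)}$ and that the $p$-integrality of the latter follows from the explicit Fourier-coefficient formulas. Your explicit-formula-and-Kummer argument is therefore more detailed than anything the paper supplies, and it is the natural route.

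There is, however, a genuine gap in your treatment of $n\geq 6$. You assert that ``all remaining factors are $p$-integral,'' but for $i\geq 3$ the factor $(2k_m-2i)/B_{2k_m-2i}$ has $p$-adic valuation equal to $-\mathrm{ord}_p\bigl(B_{2k_m-2i}/(2k_m-2i)\bigr)$, and Kummer only gives $B_{2k_m-2i}/(2k_m-2i)\equiv B_{p+3-2i}/(p+3-2i)\pmod p$. If $(p,\,p+3-2i)$ happens to be an irregular pair, the right-hand side is $\equiv 0\pmod p$ and the factor has \emph{negative} $p$-valuation. Concretely, take $p=37$ (irregular at index $32$) and any $8\leq n\leq 36$: the choice $i=4$ yields $p+3-2i=32$, so $(2k_m-8)/B_{2k_m-8}$ is not $37$-integral, and the single contribution $\mathrm{ord}_p\bigl((2k_m-4)/B_{2k_m-4}\bigr)=m$ no longer forces $\mathrm{ord}_p\bigl(a(E_{k_m}^{(n)},T)\bigr)\geq 1$. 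For $n\leq 5$ this problem does not arise (only $i\in\{1,2\}$ occur, and the $i=1$ ratio is $\equiv B_2/2=1/12$, a $p$-unit for $p>3$), so your argument is complete in that range. A smaller omission is the case $p=3$ with $n\leq 2$: there $p-1=2$ divides both $k_m$ and $2k_m-2$, so ordinary Kummer does not apply and your inequality $|2i-4|<p-1$ fails at $i=1$; one must invoke the extended Kummer congruence, as the paper itself does in the proof of Theorem~\ref{th.limit-of-FC-of-Eisenstein}.
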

The above result provides an example of Serre's type congruence in the case of Siegel
modular forms. ( For $n=2$, this theorem has already been proved in \cite[\sc Propositon 4]{KN}.
The $p$-integrality of $\widetilde{E}_2^{(n)}$ comes from the explicit
formula for the Fourier coefficients.)
\subsection{Theta operators}
\label{ThetaOp}
For a Siegel modular form $\displaystyle F=\sum a(F,T)q^T$, we define
$$
\varTheta (F):=\sum a(F,T)\cdot\text{det}(T)\,q^T\in\mathbb{C}[q_{ij}^{-1},q_{ij}][\![q_1,\ldots,q_n]\!].
$$
The operator $\varTheta$ is called the {\it theta operator}. This operator was first studied by
Ramanujan in the case of elliptic modular forms, and the generalization to the case of
Siegel modular forms can be found in \cite{BN1}.

If a Siegel modular form $F$ satisfies
$$
\varTheta (F) \equiv 0 \pmod{N},
$$
we call it an element of the space of the {\it mod $N$ kernel of the theta operator}. For example,
Igusa's cusp form $\chi_{35}\in M_{35}(\Gamma^{(2)})_{\mathbb{Z}}$ satisfies the congruence
relation
$$
\varTheta (\chi_{35}) \equiv 0 \pmod{23}\qquad\qquad (\text{cf. \cite{K-K-N}}),
$$
namely, $\chi_{35}$ is an element of the space of mod $23$ kernel of the theta operator.
\begin{theorem}
\label{modpaquare}
Assume that $p\geq 3$. Then we have
$$
\varTheta (E_{p+1}^{(3)}) \equiv 0 \pmod{p},\qquad
\varTheta (E_{p^2-p+2}^{(4)}) \equiv 0 \pmod{p^2}.
$$
The second congruence shows that the Siegel Eisenstein series $E_{p^2-p+2}^{(4)}$ is 
an element of the mod $p^2$ kernel of the theta operator.
\end{theorem}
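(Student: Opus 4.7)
The plan is to combine Theorem \ref{statementmain} with $p$-adic approximation of $\widetilde E_2^{(n)}$ by the $E_{k_m}^{(n)}$. Via the identification $\widetilde E_2^{(n)} = \mathrm{genus}\,\Theta^{(n)}(S^{(p)})$, a Fourier coefficient $a(\widetilde E_2^{(n)}, T)$ is nonzero only if $T$ is locally represented at $p$ by $U_0 \bot pU_0$ (which is $S^{(p)}$ over $\ZZ_p$ by Lemma \ref{lem.existence-of-S}). The key observation is that this local representability forces $\det T \equiv 0 \pmod{p}$ for $n = 3$ and $\det T \equiv 0 \pmod{p^2}$ for $n = 4$, so $\varTheta(\widetilde E_2^{(n)})$ already vanishes modulo the corresponding power of $p$.

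To prove the determinant divisibility, write $T = (U_0 \bot pU_0)[X]$ for $X \in M_{4,n}(\ZZ_p)$ and split $X = \bigl(\begin{smallmatrix} X_1 \\ X_2 \end{smallmatrix}\bigr)$ with $X_1, X_2 \in M_{2,n}(\ZZ_p)$. Then $T \equiv U_0[X_1] \pmod{p}$, a symmetric matrix of rank at most $2$ over $\FF_p$. For $n = 3$ this immediately gives $\det T \equiv 0 \pmod{p}$. For $n = 4$, choose $g \in GL_4(\ZZ_p)$ so that the last two columns of $X_1 g$ lie in $pM_{2,2}(\ZZ_p)$; then $T[g]$ takes the block form $\bigl(\begin{smallmatrix} A & pB \\ p\,{}^tB & pC \end{smallmatrix}\bigr)$ with $A, B, C \in M_2(\ZZ_p)$, and pulling $p$ out of each of the last two rows yields $\det T[g] \in p^2 \ZZ_p$, whence $\det T \equiv 0 \pmod{p^2}$.

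To transfer these congruences to $E_{p+1}^{(3)}$ and $E_{p^2-p+2}^{(4)}$, I establish approximations $E_{k_m}^{(n)} \equiv \widetilde E_2^{(n)} \pmod{p^m}$. The case $(n,m) = (3,1)$ for $p > 3$ is part (1) of the Corollary from the Introduction; for $p = 3, n = 3$ the prefactor $(k/B_k)(2k-2)/B_{2k-2} = (-120)(252)$ at $k = 4$ is divisible by $3^3$ via von Staudt--Clausen, so $a(E_4^{(3)}, T) \in 27\ZZ$ and $\varTheta(E_4^{(3)}) \equiv 0 \pmod{3}$ is automatic. For $(n, m) = (4, 2)$, each factor in Proposition \ref{prop.FC-Siegel} must be tracked: Kummer's congruence (applied to $k_{m+1} - k_m = (p-1)^2 p^{m-1}$) stabilises $B_{k_m}/k_m$ and $B_{2k_m-2i}/(2k_m-2i)$ modulo $p^m$; the generalized Bernoulli $B_{k_m-2,\chi_T}/(k_m-2)$ is handled as in the proof of Theorem \ref{th.limit-of-FC-of-Eisenstein}(2), with its $p$-adic pole cancelling that of $(2k_m-4)/B_{2k_m-4}$ when $\det(2T)$ is square (via Lemma \ref{lem.limit-of-Bernoulli-number}) and otherwise being controlled by $p$-integrality of $B_{k,\chi_T}$; and $F_q(T, q^{k_m-n-1})$ is $p$-adically continuous in $k_m$.

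Combining the determinant divisibility with these approximations then completes the proof: $\varTheta(E_{k_m}^{(n)}) \equiv \varTheta(\widetilde E_2^{(n)}) \equiv 0$ modulo the relevant power of $p$. The main technical obstacle is the $p^2$-accurate bookkeeping for $n = 4$, especially the non-square $\det(2T)$ subcase: although $a(\widetilde E_2^{(4)}, T) = 0$ there, one must still verify $a(E_{k_2}^{(4)}, T) \equiv 0 \pmod{p^2}$ directly by combining Kummer's congruence for the standard Bernoulli quotients with Carlitz's bound on $B_{k,\chi_T}$.
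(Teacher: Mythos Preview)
Your proposal is correct and follows the paper's overall strategy, but with a cleaner argument for the key step. Where the paper establishes the determinant divisibility by two different routes---for $n=3$ citing the local density computation in Lemma~\ref{lem.explicit-primitive-locla-density3}, and for $n=4$ working on the Eisenstein side via Theorem~\ref{th.limit-of-FC-of-Eisenstein}(2) (first forcing $\det(2T)$ to be a square and then $\mathrm{ord}_p(\det(2T))>0$, hence $p^2\mid\det(2T)$)---your direct representability argument over $\ZZ_p$ handles both degrees uniformly and, for $n=4$, bypasses the square-determinant step entirely. The paper's route has the advantage of reusing machinery already built; yours is more elementary and self-contained. You are also more careful than the paper about the approximation $E_{k_m}^{(n)}\equiv\widetilde E_2^{(n)}\pmod{p^m}$: you correctly flag that $(n,p)=(3,3)$ is not covered by the Introduction's Corollary~(1) (which assumes $p>n$) and dispose of it by the direct check $a(E_4^{(3)},T)\in 27\,\ZZ$, and you acknowledge the $\pmod{p^2}$ bookkeeping for $n=4$ as the main remaining obstacle---the paper simply asserts both approximations without further justification.
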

\begin{proof}
To prove the first congruence relation, we consider the first approximation of $\widetilde{E}_2^{(3)}$:
$$
\widetilde{E}_2^{(3)} \equiv E_{2+p-1}^{(4)}=E_{p+1}^{(3)} \pmod{p}.
$$
If $T\in\Lambda^+_3$ satisfies $a({\rm genus}\,\Theta^{(3)}(S^{(p)}),T)=a(\widetilde{E}_2^{(3)},T)\ne 0$,
then, by Lemma \ref{lem.explicit-primitive-locla-density3}, we have $\text{det}(2T) \equiv 0 \pmod{p}$.
This fact implies that
$$
\varTheta (E_{p+1}^{(3)}) \equiv \varTheta(\widetilde{E}_2^{(3)})
= \varTheta ({\rm genus}\,\Theta^{(3)}(S^{(p)}) )\equiv 0 \pmod{p}.
$$
We consider the second congruence relation.
Considering the second $p$-adic approximation of $\widetilde{E}_2^{(4)}$, we obtain
$$
\widetilde{E}_2^{(4)} \equiv E_{2+(p-1)p}^{(4)}=E_{p^2-p+2}^{(4)} \pmod{p^2}.
$$
Therefore, it is sufficient to prove that
$$
\varTheta (\widetilde{E}_2^{(4)}) \equiv 0 \pmod{p^2}.
$$
Assume that $a(\widetilde{E}_2^{(4)},T)\ne 0$ for $T\in\Lambda_4^+$. Then $\text{det}(2T)$
is square by Theorem \ref{th.limit-of-FC-of-Eisenstein}, (2). Under this condition, 
if we further
assume that $\text{det}(2T)\not\equiv 0 \pmod{p}$, then we have 
$a(\widetilde{E}_2^{(4)},T)=0$ by Theorem \ref{th.limit-of-FC-of-Eisenstein}, (2.2).
This is a contradiction. Therefore, we have $\text{det}(2T) \equiv 0 \pmod{p}$, equivalently,
 $\text{det}(2T) \equiv 0 \pmod{p^2}$. This means that, 
 if $a(\widetilde{E}_2^{(4)},T)\ne 0$ for $T\in\Lambda_4^+$, then $T\in\Lambda_4^+$ satisfies
 $\text{det}(2T) \equiv 0 \pmod{p^2}$. This implies
 $$
 \varTheta (\widetilde{E}_2^{(4)}) \equiv 0 \pmod{p^2}
 $$
and  completes the proof of Theorem \ref{modpaquare}.
\end{proof}
\noindent
\begin{remark}
The first congruence relation in Theorem \ref{modpaquare} can also be shown as a special
case of the main theorem in \cite[Theorem 2.4]{N-T}.
\end{remark}
\subsection{Numerical examples}
In this section, we provide examples of Fourier coefficients of $\widetilde{E}_2^{(n)}$
and \\
genus\,$\Theta^{(n)}(S^{(p)})$, which certify the validity of our identity in our main result.
\vspace{2mm}
\\
\fbox{\textbf{Case $\boldsymbol{n=3:}$}}
\vspace{2mm}
\\
We take
$$
p=11,\quad T={\scriptsize \begin{pmatrix} 1 & 0 & \tfrac{1}{2} \\
                                             0 & 1 &  0              \\
                                             \tfrac{1}{2} & 0 & 3 \end{pmatrix}}
                                             \in \Lambda_3^+\quad\text{with}\quad\text{det}(T)=11/4,
$$
and calculate
$a(\widetilde{E}_2^{(3)},T)$ and $a(\text{genus}\,\Theta^{(3)}(S^{(p)}),T)$.
\vspace{2mm}
\\
\textbf{Calculation of} $\boldsymbol{a(\widetilde{E}_2^{(3)},T)}$:
\vspace{2mm}
\\
By Theorem \ref{th.limit-of-FC-of-Eisenstein}, (1),
$$
a(\widetilde{E}_2^{(3)},T)=\frac{576}{(1-11)^2}\cdot\lim_{m\to\infty}(1-11^{10\cdot 11^{m-1}})=
\frac{144}{25}.
$$
\textbf{Calculation of} $\boldsymbol{a({\rm genus}\,\Theta^{(3)}(S^{(11)}),T)}$:
\vspace{2mm}
\\
We can take three representatives of $GL_4(\mathbb{Z})$-equivalence classes in genus$(S^{(11)})$:
$$
{\scriptsize
S_1^{(11)}:=\begin{pmatrix}         1     & 0 & \tfrac{1}{2} & 0 \\ 0 &        1       & 0 & \tfrac{1}{2} \\
                                  \tfrac{1}{2} & 0 &       3       & 0 \\ 0  & \tfrac{1}{2} & 0 &       3
              \end{pmatrix},\;
S_2^{(11)}:=\begin{pmatrix}     1     & \tfrac{1}{2} & \tfrac{1}{2} & \tfrac{1}{2} \\ \tfrac{1}{2} & 1 & 0 & \tfrac{1}{2} \\
                                  \tfrac{1}{2} & 0 &  4  & 2 \\ \tfrac{1}{2} & \tfrac{1}{2} & 2 &   4
              \end{pmatrix},\;
S_3^{(11)}:=\begin{pmatrix}     2   & 1 & \tfrac{1}{2} & \tfrac{1}{2} \\ 1 & 2 & 0 & \tfrac{1}{2} \\
                                  \tfrac{1}{2} & 0 &  2  & 1 \\ \tfrac{1}{2} & \tfrac{1}{2} & 1 &   2
              \end{pmatrix}.  
 }                                     
$$
Moreover we have
$$
a(S_1^{(11)},S_1^{(11)})=32,\quad a(S_2^{(11)},S_2^{(11)})=72,
\quad a(S_3^{(11)},S_3^{(11)})=24.\qquad (\text{cf.\;\cite{Nipp}}).
$$
Therefore,
\begin{align*}
& a(\text{genus}\,\Theta^{(3)}(S^{(11)}),T)=\\
&[a(\theta^{(3)}(S_1^{(11)};Z),T)/32+a(\theta^{(3)}(S_2^{(11)};Z),T)/72+a(\theta^{(3)}(S_3^{(11)};Z),T)/24]\\
& \cdot [(1/32)+(1/72)+(1/24)]^{-1}.
\end{align*}
Direct calculations show that
$$
a(\theta^{(3)}(S_1^{(11)};Z),T)=16,\qquad
a(\theta^{(3)}(S_2^{(11)};Z),T)=a(\theta^{(3)}(S_3^{(11)};Z),T)=0
$$
for the above $T\in\Lambda_3^+$.
Hence, we have
$$
a(\text{genus}\,\Theta^{(3)}(S^{(11)}),T)=16 \times\frac{9}{25}=\frac{144}{25}.
$$
Of course, this value is consistent with the value obtained using the equations
given in Propositions  \ref{prop.mass-formula}\, (1) and  \ref{prop.local-density-at-p}\, (1):
\begin{align*}
a(\text{genus}\,\Theta^{(3)}(S^{(11)}),T) & = \frac{8}{11^3}\pi^4
                                                          \cdot \alpha_{11}(U_0\,\bot\,11\cdot U_0,T)
                                                          \cdot \prod_{q\ne 11}\alpha_q(H_2,T) \\
                                                    & =  \frac{8}{11^3}\pi^4
                                                          \cdot 2(1+11)(1+11^{-1})
                                                          \cdot \frac{36}{\pi^4(1-11^{-2})^2}\\
                                                    &=\frac{144}{25}.          
\end{align*}

Further numerical examples for the case $n=3$ can be found in \cite{Okuma}.
\vspace{2mm}
\\
\fbox{\textbf{Case $\boldsymbol{n=4:}$}}
\vspace{2mm}
\\
The values $a(\text{genus}\,\Theta^{(4)}(S^{(p)}),T)$ at $T=S^{(p)}$ can be
calculated as
$$
a(\text{genus}\,\Theta^{(4)}(S^{(p)}),S^{(p})=\left(\sum_{i=1}^d\frac{1}{a(S_i,S_i)}\right)^{-1}=M(S^{(p)})^{-1}
$$
and the values $M(S^{(p)})$ for small $p$ can be found in \cite{Nipp}:
\begin{table}[hbtp]
\begin{center}
\begin{tabular}{c | cccccc}
$p$ &                                  $3$  &   $5$   &   $7$    &  $11$  &            $13$  &  $\cdots$ 
\\ 
\hline
$M(S^{(p)})^{-1}$ & $288$  &   $72$ &    $32$  &  $\frac{288}{25}$& $8$  &  $\cdots$
\end{tabular}
\end{center}
\end{table}
\vspace{1mm}
\\
These numerical data can be verified to be consistent with those obtained from the
the formula
$$
a(\widetilde{E}_2^{(4)},S^{(p)})=\frac{1152}{(p-1)^2},
$$
which is a result of Theorem  \ref{th.limit-of-FC-of-Eisenstein} (2.2).
\section{Acknowledgements}
$\bullet$\quad We thank Prof. Fumihiro Sato for valuable discussion.
\vspace{1mm}
\\
$\bullet$\quad
This work was supported by
JSPS KAKENHI: first author,  Grant-in-Aid (B) (No.16H03919) and Grant-in-Aid (C) (No. 21K03152);
 second author, Grant-in-Aid (C) ( No. 20K03547).


\end{document}